\documentclass[10pt,a4paper,reqno]{amsart}
\usepackage{latexsym}
\usepackage{amsmath,amsfonts,amssymb,epsfig,color}
\usepackage{amsthm}
\usepackage{subfigure}
\usepackage{color}
\definecolor{greencite}{rgb}{0.2,0.6,0.2} 
\definecolor{bluformula}{rgb}{0.1,0.2,0.6}
\usepackage[colorlinks=true,linkcolor=bluformula,urlcolor=blue,citecolor=greencite]{hyperref}
\voffset=-1.5cm \textheight=22cm \hoffset=-.5cm \textwidth=15cm
\oddsidemargin=0cm \evensidemargin=1cm
\parindent=20pt

\sloppy \allowdisplaybreaks

\def\R{\mathbb R}
\def\N{\mathbb N}

\def\de{\delta}
\def\e{\varepsilon}

\def\vphi{\varphi}

\def\Om{\Omega}

\def\pa{\partial}
\newcommand{\W}{\widetilde W}
\newcommand{\F}{\widetilde F}

\def\R{\mathbb R}

\def\vphi{\varphi}

\def\Om{\Omega}
\def\N{\mathbb N}

\def\e{\varepsilon}

\def\pa{\partial}

\newcommand{\wto}{\rightharpoonup}
\newcommand{\beq}{\begin{equation}}
\newcommand{\eeq}{\end{equation}}

\newcommand{\medintinrigo}{-\kern -,315cm\int}
\newcommand{\medint}{-\kern -,375cm\int}
\newtheorem{theorem}{Theorem}[section]
\newtheorem{definition}[theorem]{Definition}
\newtheorem{lemma}[theorem]{Lemma}
\newtheorem{corollary}[theorem]{Corollary}
\newtheorem{proposition}[theorem]{Proposition}
\newtheorem*{theorem*}{Theorem}
\newtheorem{remark}[theorem]{Remark}
\numberwithin{equation}{section}
\begin{document}

\title[Geometrically induced phase transitions]
{Geometrically induced phase transitions in two-dimensional dumbbell-shaped domains}
\author{ M. Morini \& V. Slastikov}
\address[M.\ Morini]{Universit\`a di Parma, Parma, Italy}
\email[Massimiliano Morini]{massimiliano.morini@unipr.it}
\address[V.\ Slastikov]{University of Bristol, Bristol, UK}
\email[Valeriy Slastikov]{Valeriy.Slastikov@bristol.ac.uk}
\begin{abstract} 

We continue the analysis, started in \cite{MorSla}, of a two-dimensional  non-convex variational problem, motivated by  studies on magnetic domain walls trapped by thin necks. The main focus is on the impact of extreme geometry on the structure of local minimizers representing the transition between two different constant phases. We address here the case of general non-symmetric dumbbell-shaped domains with a small constriction and general multi-well potentials. Our main results concern the existence and uniqueness  of non-constant local minimizers, their full classification in the case of convex bulks, and the complete description of their asymptotic  behavior, as the size of the constriction tends to zero. 
\end{abstract}

\maketitle

\section{Introduction}
In this paper we continue the study started in  \cite{KV, MorSla} of the local minimizers of the following non-convex energy functional 
\beq\label{ener}
F(u,\Omega_\e) = \frac{1}{2} \int_{\Omega_\e} |\nabla u|^2 \, dx + \int_{\Omega_\e} W(u) \, dx, 
\eeq
where $\Omega_\e \subset \R^n$ is a dumbbell shaped domain with a small neck (see Figure~\ref{fig:oe}), $W(\cdot)$ is a multi-well potential, and $\e \ll 1$ is a small parameter related to the size of the neck. 

\begin{figure}[htbp]
 \begin{center}
 \includegraphics[scale=0.5]{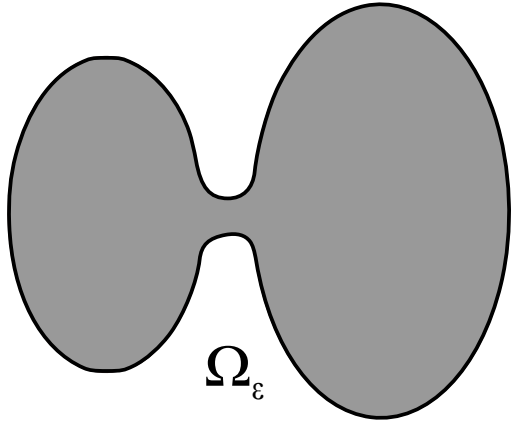}
 \caption{A dumbbell-shaped domain $\Omega_\e$.}
 \label{fig:oe}
 \end{center}
 \end{figure} 

We recall that a physical motivation comes from the investigation of the so-called {\it geometrically constrained walls} and the magnetoresistance properties of thin films with a small constriction. Indeed, if the thin film has cross section along the $xy$-plane given by a domain as in Figure~\ref{fig:oe}, and the magnetization $m$ is allowed to vary only in the $yz$-plane (see Figure~\ref{fig:3D});
 i.e., 
$$
m=(0, \cos u, \sin u ),
$$
with  preferred directions $m=(0, \pm 1, 0)$\footnote{Here $u$ represents the angle between $m$ and the $y$-axis. } (this assumption correspond to the case of uniaxial ferromagnet),   then the magnetostatic interaction can be ignored and the stable magnetic structures are  described by the local minimizers of a non-convex energy of the form \eqref{ener},  with $W(u)\approx \sin^2u$. 
\begin{figure}[htbp]
 \begin{center}
 \includegraphics[scale=0.3]{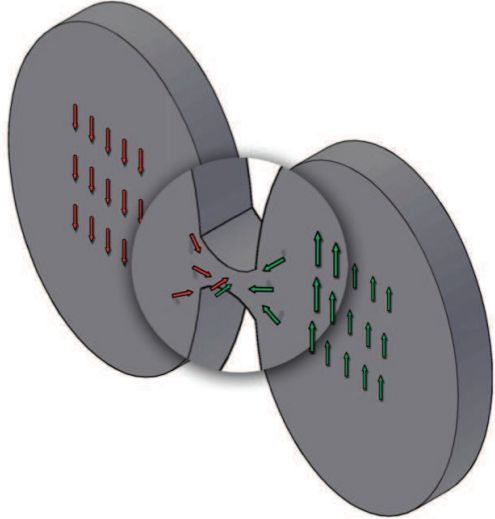}
 \caption{A thin micromagnetic film, with the arrows representing the magnetization. The magnified region is the magnetic domain wall.}
 \label{fig:3D}
 \end{center}
 \end{figure} 
 One then wants to study the nonconstant local minimizers of  \eqref{ener} representing the transition from the constant state $(0, -1, 0)$ in one bulk to the constant state $(0, 1, 0)$ in the other bulk.
 
Following  the pioneering work by Bruno \cite{Bruno}, the study of geometrically constrained walls has attracted   the interest of the physical community from  both the theoretical \cite{MOP, Chen} and the experimental points of view \cite{Harsh, Jub, Sasaki, Tatara}. Bruno noticed that when the size of the constriction becomes very small the neck will be the preferred location for a {\it domain wall}, that is, the transition layer between two regions of (almost) constant magnetization. He also observed that under these circumstances  the impact of the geometry of the neck on the structure of the wall profile becomes dominant and produces a limiting behavior that is independent of the material parameters (whence the name of {\it geometrically constrained} walls or {\em geometrically induced phase transitions}). 

%

When the size $\e$ of the constriction is very small, we may regard the neck as a {\em singular perturbation} of the domain given by the disjoint union of the two bulks.  There exists an extensive mathematical literature devoted to a study of the properties of solutions to nonlinear partial differential equations in singularly perturbed domains, see for instance \cite{Arrieta,  Murat, Dan1,  Daners, HV, Jimbo1, Jimbo2, Jimbo3, KV, MorSla,  RSS}. 
Apart from the directly relevant papers \cite{KV, MorSla} the closest in spirit to this work is that  of Jimbo.  In the series of papers \cite{Jimbo1, Jimbo2,  Jimbo3} he uses PDE methods to study the asymptotic behaviour of the solutions of semilinear elliptic problems for $n$-dimensional dumbbell shaped domain ($n\geq 2$) with a rotationally symmetric neck of fixed length and  shrinking  in the radial direction.

As already mentioned, our work is closely related to  \cite{KV, MorSla}. In \cite{KV} a rigorous study of {\it geometrically constrained walls}  was undertaken in the three-dimensional case. 
The authors constructed a suitable family $u_\e$ of non-trivial local minimizers of $F(\cdot, \Omega_\e)$ with the choice  $W(u):=(u^2-1)^2$ and investigated their asymptotic behavior using variational methods  and $\Gamma$-convergence arguments. This behavior was shown to strongly depend on the size of the neck, specifically on the ratio between the radius $\delta$ of the neck  and its  length $\e$. Three asymptotic regimes were identified, leading to three different limiting problems:
\begin{itemize}
\item[(a)] the {\it thin neck} regime, corresponding to $\displaystyle \frac{\delta}{\e} \to 0$;
\item[(b)] the {\it  normal neck} regime, corresponding to $\displaystyle \frac{\delta}{\e} \to l =cost$;
\item[(c)] the {\it thick neck} regime, corresponding to $\displaystyle \frac{\delta}{\e} \to \infty$.
\end{itemize}
The findings of \cite{KV}  show that in the thin neck regime the wall profile is asymptotically confined  inside the neck and its limiting one-dimensional  behavior depends only on the geometry of the neck. This is the only regime where the one-dimensional ansatz considered in \cite{Bruno} turns out to be  correct. Instead, in the normal neck regime the asymptotic profile is three-dimensional and spreads into the bulks. Finally, in the  thick neck regime  the asymptotic problem is independent of the neck geometry and the full transition between the two states of constant magnetization occurs outside of the neck.


The variational methods introduced in \cite{KV} do not apply to the two-dimensional case, where the logarithmic slow decay of the fundamental solution significantly affects the qualitative behavior of local minimizers. This problem was treated in \cite{MorSla} in the case when $\Omega_\e \subset \R^2$ is a dumbbell shaped domain symmetric with respect to the $y$-axis and $W$ is an even double-well potential  with the  two symmetric wells located at $-1$ and $1$ (and satisfying some additional  structure assumptions of technical nature). More precisely, in \cite{MorSla} we have constructed a particular family $(u_\e)$ of  local minimizers, odd with respect to the $x$-variable, and asymptotically converging to $1$ on the right bulk and to $-1$ on the left bulk, and we studied their asymptotic behavior as $\e\to 0$. 
 The result of this investigation  shows that  the two-dimensional case displayis a richer variety of asymptotic regimes. In particular, in addition to the normal and thick neck regimes, we found  out that the thin neck regime subdivides into three further subregimes:
\begin{itemize}
\item {\it the subcritical thin neck} regime, corresponding to $\displaystyle \frac{\delta |\ln \delta|}{\e} \to 0$;
\item {\it the critical thin neck} regime, corresponding to $\displaystyle \frac{\delta |\ln \delta|}{\e} \to l=const$;
\item {\it the supercritical thin neck} regime, corresponding to $\displaystyle \frac{\delta |\ln \delta|}{\e} \to \infty$.
\end{itemize}
In all cases, the limiting behavior  turns out to be nonvariational and  can be described in terms of  elliptic problems on suitable unbounded domains, with prescribed behavior at infinity. This is the reason why the  approach introduced in \cite{MorSla} is based on PDE methods rather then $\Gamma$-convergence techniques. There, the main idea is  to exploit the Maximum Principle in order to construct precise lower and upper barriers for the given local minimizers, which allow us to capture their asymptotic behavior.  Nevertheless, these constructions heavily rely on the symmetry of $\Om_\e$ and the fact that $u_\e=0$ on the middle vertical segment  $\{x=0\}\cap \Om_\e$. 

The main  questions left open in \cite{MorSla} are: (a) Is the constructed family $(u_\e)$ the {\em unique} family of nontrivial local minimizer of $F_\e$ asymptotically connecting the constant states $-1$ and $1$? (b) Can the analysis of \cite{MorSla} be extended to the case of non-symmetric domains? We address these issues in the present paper. 

\vskip 0.5cm

We are now in a position to describe our results in more detail, referring to the next sections for the precise statements. We assume $\Omega_\e$ to be a dumbbell shaped domain consisting of two bulks $\Omega_\e^l = \Omega_l - (\e, 0)$ and $\Omega_\e^r= \Omega_r + (\e, 0)$ not necessarily symmetric and connected by a small neck $N_\e$. The dimensions of the neck are governed by two small parameters $\e$ and $\de$, corresponding to its length and height, respectively. We consider general  multi-well potentials $W$ of class $C^2$, with isolated wells. Our main findings can be summarized as follows: 
\begin{enumerate}
\item (existence): we prove that for any pair $\alpha\neq \beta$ of wells of $W$,  there exists  a family $(u_\e)$ of non-constant local minimizers of $F(\cdot, \Om_\e)$, which asymptotically connect the constant states $\alpha$ and $\beta$; i.e., $u_\e\approx \alpha$ on one bulk and $u_\e\approx \beta$ on the other, for $\e$ small enough; 
\item (uniqueness): we show that for given $\alpha$, $\beta$, the corresponding family of non-constant local minimizers as in (1) is  {\em unique};
\item (classification): we show that the family of non-constant local minimizers considered in the previous items  {\em exhaust all} the possible local minimizers of $F(\cdot, \Om_\e)$ for $\e$ small enough, provided that the bulks $\Omega^l$ and $\Omega^r$ are {\em convex} and regular enough;
\item (asymptotics): we identify  the limiting behavior of the families of local minimizers considered in (1) and (2) in all the regimes determined by the scaling  parameters $\e$ and  $\de$.
\end{enumerate}
We will refer to the families of  local minimizers described in (1) as {\em families of nearly locally constant local minimizers}.
The proof of the existence is purely variational and adapts to the present setting an argument developed in \cite{KV}.
In fact, the same argument could be used to establish the following general {\em bridge principle}: if $u^l$ and $u^r$ are isolated local minimizers of $F(\cdot, \Omega^l)$ and $F(\cdot, \Omega^r)$, respectively, then there exists a (unique) family $(u_\e)$ of local minimizers of  $F(\cdot, \Om_\e)$ such that $u_\e\approx u^l$ in the left bulk $\Omega_\e^l$  and  $u_\e\approx u^r$ in $\Omega_\e^r$ for $\e$ small enough (see Remark~\ref{rm:bridge}). We remark that  the non-convexity of $\Om_\e$  is a necessary condition for the existence of non-constant local minimizers (see \cite{CaHo}).
 
  The uniqueness follows from showing that the local minimizers $u_\e$ are in fact isolated  $L^1$-local minimizer of $F(\cdot, \Omega_\e)$. This observation is based on a second variation argument and requires one to carefully track the behavior of the first eigenvalue $\lambda_\e$ of $\partial^2F(\cdot, \Om_\e)$, as $\e\to 0$. 

As shown in \cite{CaHo}, if $\Om$ is regular and convex, then all the stable critical points of $F(\cdot, \Omega)$ are constant.  This fact, properly combined with the existence and uniqueness results described before,  allows us to provide a complete classification of the  stable critical points of $F(\cdot, \Omega_\e)$ for $\e$ small enough, when the bulks  $\Omega^l$ and $\Omega^r$ are convex and regular. See Theorem~\ref{th:classification} for precise statement.

Finally, a few words are in order regarding the study of the asymptotic behavior of  the local minimizers. As mentioned before, the methods and the constructions of \cite{MorSla} were heavily relying on the symmetry assumptions of both $\Om_\e$ and the potential $W$. The lack of symmetry here is overcome  by a careful estimate of the amount of energy $F(u_\e, B_\de)$, which concentrates on  small balls  $B_\de$ of size $\de$ centered at points of the neck. This {\em localization estimate}, which is obtained by  a blow-up argument,  allows us to adapt (and in fact to  simplify) some of the constructions of \cite{MorSla} and to extend all the results to general non-symmetric domains.

The paper is organized as follows. In Section~2 we formulate the problem and describe the assumptions on the domains $\Omega_\e$ and the potential $W$. In Section~3 we prove the existence and uniqueness of families  of nearly locally constant local minimizers, and provide a complete classification of stable critical points in the case of regular convex bulks $\Omega^l$ and $\Omega^r$. Finally,  Section~4 is devoted to the  study of the asymptotic behavior of families  of nearly locally constant local minimizers in the various regimes. We will work out the details only in the normal neck and critical thin neck regimes and only state the results in the remaining regimes, leaving the similar (and in fact easier proofs) to the interested reader.

\section{Formulation of the problem}\label{sec:formulation}

In this section we give the precise formulation of the problem. 
We start by describing the limiting domain.  This will be the disjoint union 
$$
\Om_0=\Om^l\cup\Om^r\,,
$$
where $\Om^l$ and $\Om^r$  are bounded connected open sets of class $C^{1,\gamma}$ for some $\gamma\in (0,1)$, satisfying (see Figure~\ref{fig:limiting}):
\begin{itemize}
\item[(O1):] the origin $(0,0)$ belongs to both $\partial \Om^r$ and $\partial \Om^l$;
\item[(O2):] $\Omega^r$ lies  in the right half-plane $\{x>0\}$, while $\Om^l$ lies in left half-plane $\{x<0\}$.
\end{itemize}
Finally, throughout the paper we will also make the following technical assumption: 
\begin{itemize}
\item[(O3):] there exists $r_0>0$ such that $\partial \Om^r\cap B_{2r_0}(0,0)$  and $\partial \Om^l\cap B_{2r_0}(0,0)$ are flat and vertical.
\end{itemize}
Hypothesis (O3) is not really necessary for the analysis carried out in this paper. We decided to add it in order to avoid some technicalities that would distract from the main new ideas introduced here. All the results
 we are going to prove remain valid also without the additional assumption. Indeed,  if  (O3) does not hold, one can reduce to it by  straightening the boundary through a suitable conformal change of variables  and then construct the same barriers and test functions presented  here, but with respect to the new variables (see, for instance,  \cite[Section 3]{MorSla}).  
\begin{figure}[htbp]
 \begin{center}
 \includegraphics[scale=0.5]{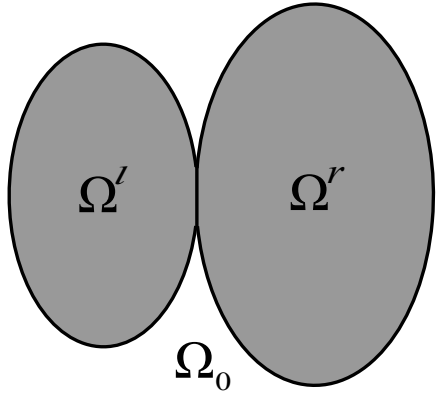}
 \caption{The limiting set $\Om_0$.}
 \label{fig:limiting}
 \end{center}
 \end{figure} 

The profile of the neck after  rescaling is described by
 two functions $f_1, f_2 \, : \, [-1,1] \mapsto (0,+\infty)$ of class $C^{1,\gamma}$ and by
the two small parameters $\e>0$ and $\delta=\de(\e)>0$, which represent the scaling of length and height of the neck, respectively. \emph{As in \cite{MorSla}, $\de$ will always be considered  as depending 
on $\e$, even though, for notational convenience, we will often omit to explicitly write such a dependence.} We also assume throughout
the paper that $\de(\e)\to 0$ as $\e\to 0$.
To describe the $\e$-domain, we set
\beq \label{omega-epsilon}
\Omega_\e = \Omega_\e^l \cup N_\e \cup \Omega_\e^r,
\eeq
where
\beq \label{omega-epsilon-lr}
\Om_\e^r := \Omega^r + (\e, 0)\,,\qquad \Om_\e^l := \Omega^l - (\e, 0)
\eeq
and 
\beq\label{neck}
N_\e := \Bigl\{ (x,y) \, : \, |x|\leq\e,\, -\delta f_2\Bigl(\frac{x}{\e}\Bigr)<y< \delta f_1\Bigl(\frac{x}{\e}\Bigr) \Bigr\}
\eeq
(see Figure \ref{fig:omegae}).
\begin{figure}[htbp]
 \begin{center}
 \centerline{
    $\vcenter{\mbox{\epsfig{file=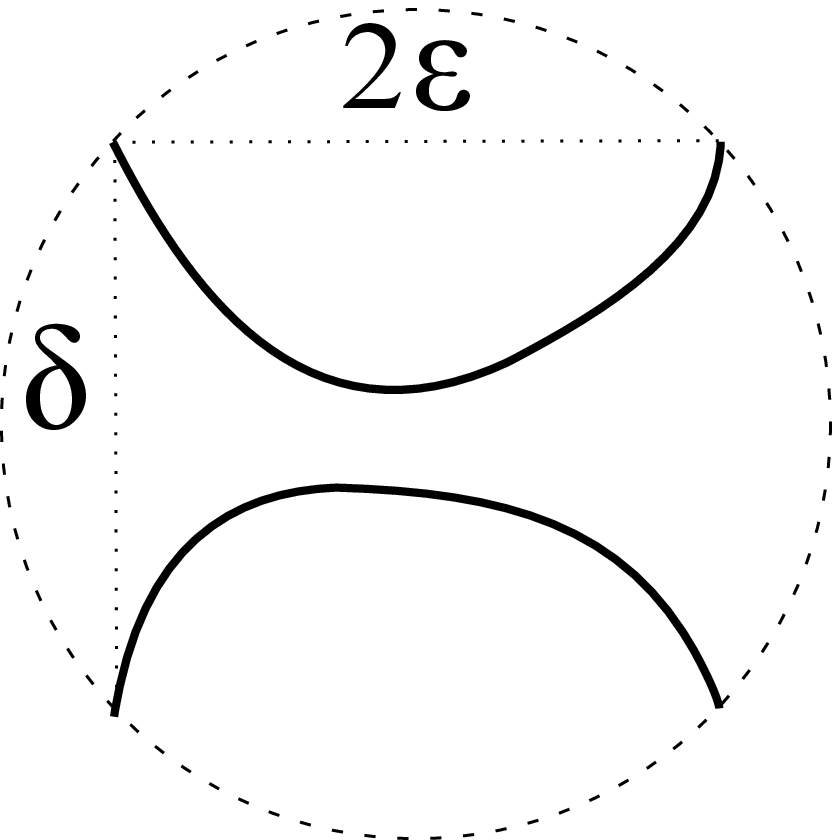, width=0.23\textwidth}}}$
    \hspace{-9cm}
    $\vcenter{\mbox{\epsfig{file=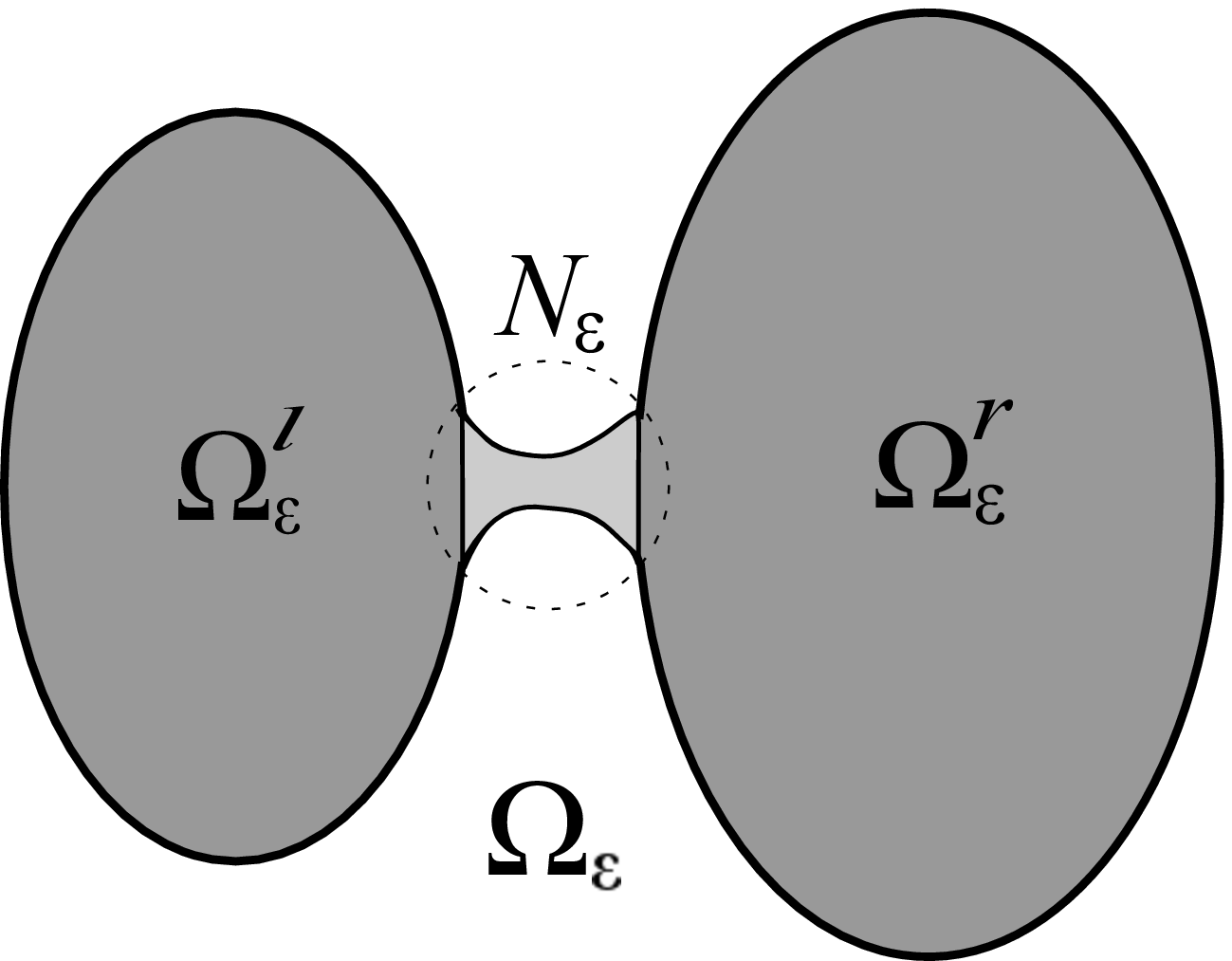, width=0.5\textwidth}}}$
  }
 \caption{The  dumbbell-shaped set $\Om_\e$.}
 \label{fig:omegae}
 \end{center}
 \end{figure} 
 Note that 
 $$
 N_\e=\{(\e x,\de y): (x, y)\in N\}\,,
 $$
 where $N$ is the unscaled neck given by 
 \beq\label{unscaledN}
 N=\{(x,y):x\in[-1,1]\,, -f_2(x)<y<f_1(x)\}\,.
 \eeq
  Finally,  observe that $\Omega_\e$ is a Lipschitz domain. 


The main focus of the paper is the study of a  suitable class of {\em nearly constant  critical points}  of the energy functional 
$$
F(u, \Om_\e) := \frac12\int_{\Omega_\e} |\nabla u|^2\, dxdy + \int_{\Omega_\e} W(u) \, dxdy\,,
$$
defined for all $u\in H^1(\Om_\e)$.  Here, and throughout the paper,  $W:\R\to \R$ is a {\em multi-well potential} with the following
properties (see Figure~\ref{double-well-nonsym}):
\begin{itemize}
\item[(W1)] $W$ is of class $C^2$;
\item[(W2)] the set 
\beq\label{setV}
V:=\{t\in \R:\, W'(t)=0\text{ and }ÊW''(t)>0\}
\eeq
 contains at least two points.
\end{itemize}
Clearly, $V$ represents the set of wells of the potential $W$.
A model case is of course given  by $W(u):=(u-\alpha)^2(u-\beta)^2$.
\begin{figure}[htbp]
 \begin{center}
 \includegraphics[scale=0.3]{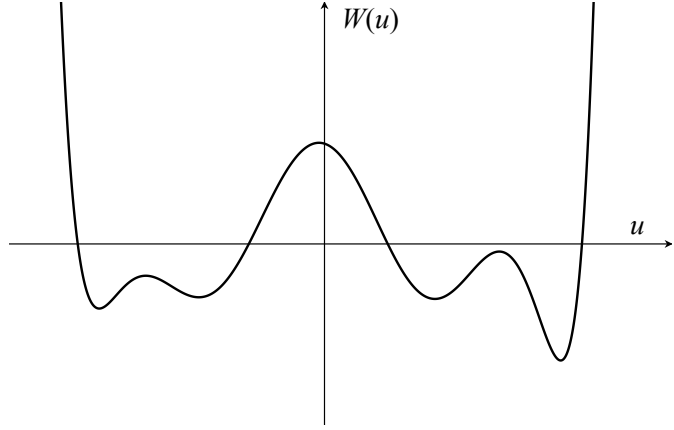}
 \caption{An example of a potential $W(u)$.}
 \label{double-well-nonsym}
 \end{center}
 \end{figure}
 
We recall that a function $u\in H^1(\Om_\e)$ is a critical point for $F(\cdot, \Om_\e)$ if it satisfies 
\beq\label{EL}
\begin{cases}
\Delta u=W'(u) & \text{in $\Om_\e$,}\\
\displaystyle \frac{\pa u}{\pa \nu}=0 & \text{on $\pa\Om_\e$,}
\end{cases}
\eeq
or, equivalently,
\beq\label{ELw}
\int_{\Om_\e}\nabla u\nabla \vphi\, dxdy+\int_{\Om_\e}W'(u)\vphi\, dxdy=0\qquad\text{for all }\vphi\in H^1(\Om_\e)\,.
\eeq
Finally, it is convenient to  extend the definition of $F$ to any subset $\Om\subset\R^2$, by setting
\beq\label{Fanyomega}
F(u, \Om) := \frac12\int_{\Omega} |\nabla u|^2\, dxdy + \int_{\Omega} W(u) \, dxdy\,,
\eeq
for all $u\in H^1(\Om)$. 

\section{Nearly locally constant critical points}
In this paper we are concerned with the existence and the asymptotic behavior of  sequences of critical points 
that are nearly constant, according to the following definition.
\begin{definition}\label{def:gcw}
For $\e>0$ let $u_\e\in H^1(\Om_\e)$ be a critical point of $F(\cdot, \Om_\e)$. We say that the  family $(u_\e)$ is an {\em admissible family of nearly locally constant critical points} if
\begin{itemize}
\item[(a)] there exists $\bar\e>0$ such that $\sup_{0<\e\leq\bar\e}\|u_\e\|_{\infty}=: \overline M<+\infty$;
\item[(b)] there exist constants $\alpha\neq \beta$ belonging to the set $V$ in \eqref{setV}  such that  
\beq\label{alphabeta0}
 \|u_\e-\alpha\|_{L^1(\Om^l_\e)}\to 0 \qquad\text{and}\qquad \|u_\e-\beta\|_{L^1(\Om^r_\e)}\to 0\,,
 \eeq
 as $\e\to 0^+$.
\end{itemize}
\end{definition}
\begin{remark}\label{rm:w3}
Under additional assumptions on the potential $W$, condition (a) in the above definition is automatically satisfied. For instance, this is the case when $W$ satisfies:
\begin{itemize}
\item[(W3)] there exists $\overline M>0$ such that $W'(t)>0$ if $t\geq \overline M$ and $W'(t)<0$ if $t\leq -\overline M$. 
\end{itemize}
Indeed, by the maximum principle one can show that any solution $u$ to \eqref{EL} satisfies $|u|\leq \overline M$.
\end{remark}
We will prove that nearly constant critical points are local minimizers of the energy functional for  $\e$ small enough. Hence, they represent physically observable stable configurations.

 \begin{theorem}\label{th:locmin}
 Let $(u_\e)$ be a family of critical points as in Definition~\ref{def:gcw}. Then, there exist $\e_0>0$ and $\eta_0>0$
 such that 
 for $0<\e\leq\e_0$ 
 \beq\label{eq:locmin}
 F(v, \Om_\e)>F(u_\e, \Om_\e) \qquad\text{for all $v\in H^1(\Om_\e)$ such that $0<\|v-u_\e\|_{L^1(\Om_\e)}\leq \eta_0$.}
 \eeq
Both $\e_0$ and $\eta_0$ depend only on the constants $\alpha$, $\beta$, and  $\overline M$ appearing in Definition~\ref{def:gcw}.
 \end{theorem}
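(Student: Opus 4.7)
The plan is to follow the strategy outlined in the introduction: derive a uniform lower bound on the smallest Neumann eigenvalue $\lambda_\e$ of the Jacobi operator $-\Delta+W''(u_\e)$ on $\Om_\e$, and convert this into strict $L^1$-local minimality of $u_\e$ via a Taylor expansion of $F$ around the critical point $u_\e$. Setting $\lambda_0:=\min\{W''(\a),W''(\b)\}>0$ (positive by $\a,\b\in V$), the key assertion is $\liminf_{\e\to 0^+}\lambda_\e\geq\lambda_0$.

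I would argue this by contradiction. Suppose $\lambda_{\e_n}\to\lambda^*<\lambda_0$ along $\e_n\to 0$, and let $\vphi_n\in H^1(\Om_{\e_n})$ with $\|\vphi_n\|_{L^2}=1$ be the corresponding first eigenfunctions; by Definition~\ref{def:gcw}(a), $\|W''(u_{\e_n})\|_\infty$ is uniformly bounded, so $(\vphi_n)$ is bounded in $H^1(\Om_{\e_n})$. Elliptic regularity applied to \eqref{EL}, combined with \eqref{alphabeta0}, upgrades the $L^1$-convergences to $u_{\e_n}\to\a$ in $C^1_{\loc}(\Om^l)$ and $u_{\e_n}\to\b$ in $C^1_{\loc}(\Om^r)$. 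Extracting weak $H^1$ limits $\vphi^l,\vphi^r$ of $\vphi_n$ on the two bulks and passing to the limit in the eigenvalue equation, one obtains Neumann eigenfunctions of $-\Delta+W''(\a)$ on $\Om^l$ and of $-\Delta+W''(\b)$ on $\Om^r$ with eigenvalue $\lambda^*$. Since the lowest Neumann eigenvalue of $-\Delta+c$ on a bounded Lipschitz domain equals $c$ (attained by constants), $\lambda^*<\lambda_0$ forces $\vphi^l\equiv 0$ and $\vphi^r\equiv 0$. The contradiction with $\|\vphi_n\|_{L^2}=1$ is then completed by showing that the $L^2$-mass of $\vphi_n$ on $N_{\e_n}$ and on a neighborhood of the origin in the bulks vanishes. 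Here assumption (O3) is crucial: for a.e.\ $x_0\in(-2r_0,-r_0)$ the segment $\{x=x_0\}\cap\Om^l$ sits in a rectangular subdomain of the left bulk, the trace $\vphi_n(x_0,\cdot)$ tends to $0$ in $L^2$ by the bulk convergence, and writing $\vphi_n(x,y)=\vphi_n(x_0,y)+\int_{x_0}^x\pa_x\vphi_n(s,y)\,ds$ on the rectangular slab containing $N_{\e_n}$, together with the uniform $H^1$ bound and $|N_{\e_n}|\to 0$, yields the desired estimate.

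With the spectral gap in hand, for $\vphi:=v-u_\e\in H^1(\Om_\e)$ the Taylor expansion gives
\[
F(v,\Om_\e)-F(u_\e,\Om_\e)=\tfrac{1}{2}\int_{\Om_\e}\bigl(|\nabla\vphi|^2+W''(u_\e)\vphi^2\bigr)\,dxdy+\mathcal{R}_\e(\vphi),
\]
where the first-order term vanishes by \eqref{ELw} and $\mathcal{R}_\e(\vphi):=\int_{\Om_\e}\int_0^1(1-t)[W''(u_\e+t\vphi)-W''(u_\e)]\vphi^2\,dt\,dxdy$. The quadratic form is bounded below by $(\lambda_0/2)\|\vphi\|_{L^2}^2$ for $\e$ small. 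To handle $\mathcal{R}_\e(\vphi)$ one first reduces to competitors with $\|v\|_\infty$ bounded by a constant depending only on $\ov M$: the bound $F(v)\leq F(u_\e)$ yields an $H^1$-bound on $\vphi$, and $\|\vphi\|_{L^1}\leq\eta_0$ together with the Gagliardo--Nirenberg inequality $\|\vphi\|_{L^p}\leq C\|\vphi\|_{L^1}^{1/p}\|\nabla\vphi\|_{L^2}^{1-1/p}$ gives smallness in every $L^p$, $p<\infty$. Splitting $\Om_\e$ according to whether $|\vphi|\leq\tau$ (where uniform continuity of $W''$ on bounded sets forces $|\mathcal{R}_\e|\leq\omega(\tau)\|\vphi\|_{L^2}^2$ with $\omega(\tau)\to 0$) or $|\vphi|>\tau$ (whose measure is $\leq\eta_0/\tau$, absorbable via the $L^p$-smallness) then produces $F(v)-F(u_\e)\geq(\lambda_0/4)\|\vphi\|_{L^2}^2>0$ whenever $\vphi\not\equiv 0$, provided $\eta_0$ is small. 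The main obstacle is the neck-mass estimate in the spectral-gap step: since the eigenfunctions are not a priori uniformly $L^\infty$-bounded, the smallness of $|N_{\e_n}|$ alone does not suffice, and one must combine the $H^1$ bound with the bulk vanishing of $\vphi_n$ and the quasi-one-dimensional geometry of the neck afforded by (O3).
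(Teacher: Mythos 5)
Your overall strategy --- a compactness/contradiction argument for a spectral gap of the linearized operator, followed by a Taylor expansion of $F$ around the critical point --- is the same as the paper's, and your treatment of the spectral-gap step is sound: the slicing argument via traces on vertical segments inside the flat region provided by (O3) is a legitimate alternative to the paper's route (extension of $\vphi_n|_{\Om^r_{\e_n}}$ to $H^1(\R^2)$, the embedding $H^1\hookrightarrow L^p$, and the neck Poincar\'e inequality of Lemma~\ref{lm:epoinc}) for killing the $L^2$-mass on the neck. The problems are in the second half.

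First, the reduction to competitors with $\|v\|_\infty\leq C(\ov M)$ does not work as stated: the paper only assumes $W\in C^2$ with nondegenerate wells, so $W$ may tend to $-\infty$ and $W''$ may be unbounded; then $F(v)\leq F(u_\e)$ gives no $H^1$ bound on $v$, truncation need not decrease the energy, and in two dimensions an $H^1$ bound would not yield an $L^\infty$ bound anyway. Without an a priori bound on $\|v\|_\infty$ the factor $W''(u_\e+t\vphi)$ in $\mathcal{R}_\e$ is uncontrolled on $\{|\vphi|>\tau\}$. The paper resolves this by replacing $W$ with a $C^2$ potential $\W\leq W$ satisfying $\W=W$ on $[-\ov M,\ov M]$ and $|\W''|\leq M$ globally, and using $F(v)\geq\F(v)$ together with $\F(u_\e)=F(u_\e)$; some such modification is indispensable and is missing from your argument. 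Second, even granting $|W''|\leq M$, your absorption of the bad-set term is not closed: Gagliardo--Nirenberg bounds $\int_{\{|\vphi|>\tau\}}\vphi^2$ by a small constant times $\|\vphi\|_{L^2}\|\vphi\|_{H^1}$, which cannot be absorbed into $(\lambda_0/4)\|\vphi\|_{L^2}^2$ alone --- you must retain a fixed fraction of the Dirichlet energy on the good side (and check that the interpolation constant is uniform in $\e$ on the degenerating domains $\Om_\e$). The paper avoids the remainder entirely by proving the stronger statement that $\pa^2F(v,\Om_\e)[\vphi]\geq\tfrac{\lambda_0}{2}\|\vphi\|_{L^2(\Om_\e)}^2$ for \emph{every} $v$ with $\|v-u_\e\|_{L^1(\Om_\e)}\leq\eta_0$, not just for $v=u_\e$; then $f(t):=F(u_\e+t(v-u_\e),\Om_\e)$ satisfies $f''(t)\geq\tfrac{\lambda_0}{2}\|v-u_\e\|_{L^2(\Om_\e)}^2$ for all $t\in(0,1)$, and the conclusion follows from $f(1)=f(0)+\int_0^1(1-t)f''(t)\,dt$ with no remainder to estimate. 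The compactness argument for this uniform version is virtually identical to the one you already run at $u_\e$, so this is the natural repair.
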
	 
 The proof the theorem borrows some ideas from \cite[Lemma 2.2]{MorSla}. Before starting, we recall the following simple Poincar\'e inequality (see  \cite[Proof of Lemma 2.2-Step 2]{MorSla}).
 \begin{lemma}\label{lm:epoinc}
There exists  a constant $C_1>0$ independent of $\e$ such that 
$$
\int_{N^+_\e}|\nabla \vphi|^2\, dxdy\geq \frac{C_1}{\e^2}\int_{N^+_\e}|\vphi|^2\, dxdy 
$$
for all $\vphi\in H^1(N^+_\e)$ satisfying $\vphi=0$ on $\{x=\e\}$, where $N^+_\e:=N_\e\cap\{x>0\}$.
 \end{lemma}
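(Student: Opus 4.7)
The plan is to rescale $N^+_\e$ to a fixed reference half-neck and deduce the estimate from a horizontal Poincar\'e inequality there; the entire $\e$-dependence will then come out of the Jacobian of the anisotropic rescaling. Specifically, I will introduce the change of variables $(x,y) = (\e\tilde x, \delta\tilde y)$, which maps $N^+_\e$ bijectively onto the fixed reference domain
$$
N^+ := \bigl\{(\tilde x, \tilde y) \,:\, 0 \leq \tilde x \leq 1,\ -f_2(\tilde x) < \tilde y < f_1(\tilde x)\bigr\},
$$
a bounded Lipschitz set depending only on $f_1, f_2$. Setting $\tilde\vphi(\tilde x, \tilde y) := \vphi(\e\tilde x, \delta\tilde y)$, the boundary condition $\vphi = 0$ on $\{x = \e\}$ becomes $\tilde\vphi = 0$ on the fixed arc $\Sigma := \{1\} \times (-f_2(1), f_1(1)) \subset \pa N^+$, which has positive one-dimensional measure.

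The key analytic ingredient is then a one-directional Poincar\'e inequality on the reference domain: there exists a constant $C = C(f_1, f_2) > 0$ such that
$$
\int_{N^+} |\tilde\vphi|^2\, d\tilde x\, d\tilde y \,\leq\, C \int_{N^+} |\pa_{\tilde x}\tilde\vphi|^2\, d\tilde x\, d\tilde y
$$
for every $\tilde\vphi \in H^1(N^+)$ vanishing on $\Sigma$. This will follow from the classical slicing argument: since for each $(\tilde x_0, \tilde y_0) \in N^+$ the horizontal segment to $(1, \tilde y_0)$ lies in $N^+$, the fundamental theorem of calculus and Cauchy-Schwarz give $|\tilde\vphi(\tilde x_0, \tilde y_0)|^2 \leq (1 - \tilde x_0)\int_{\tilde x_0}^1 |\pa_s\tilde\vphi(s, \tilde y_0)|^2\, ds$, and Fubini yields the inequality with an explicit constant. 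Scaling back, the Jacobian gives $dx\,dy = \e\delta\, d\tilde x\, d\tilde y$ and $\pa_x \vphi = \e^{-1}\pa_{\tilde x}\tilde\vphi$; retaining only the $x$-component of the gradient on the left-hand side of the target inequality (which is essential, since it is the horizontal length $\e$, not the height $\delta$, that drives the $\e^{-2}$ scaling), one computes
$$
\int_{N^+_\e}|\nabla\vphi|^2\, dx\, dy \,\geq\, \int_{N^+_\e}|\pa_x\vphi|^2\, dx\, dy \,=\, \frac{\delta}{\e}\int_{N^+}|\pa_{\tilde x}\tilde\vphi|^2 \,\geq\, \frac{\delta}{C\e}\int_{N^+}|\tilde\vphi|^2 \,=\, \frac{1}{C\e^2}\int_{N^+_\e}|\vphi|^2\, dx\, dy,
$$
and the conclusion follows with $C_1 := 1/C$.

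The main (and essentially only) subtle point is the horizontal-reachability hypothesis in the slicing argument: for general non-monotone $f_1, f_2$ one must check that each horizontal slice of $N^+$ actually meets $\Sigma$. For the dumbbell geometries used throughout the paper --- in which the neck widens monotonically from its narrowest cross-section toward each bulk --- this is automatic, and in any event this geometric input affects only the numerical value of $C$, not the decisive $\e^{-2}$ scaling, which comes entirely from the anisotropic rescaling.
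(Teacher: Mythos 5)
The paper itself does not prove this lemma but delegates it to \cite[Proof of Lemma 2.2, Step 2]{MorSla}, so there is no in-paper argument to compare against; your rescale-and-slice strategy is the natural and almost certainly the intended one. The Jacobian bookkeeping is correct: $\int_{N^+_\e}|\pa_x\vphi|^2\,dxdy=\tfrac{\de}{\e}\int_{N^+}|\pa_{\tilde x}\tilde\vphi|^2$ and $\int_{N^+_\e}|\vphi|^2\,dxdy=\e\de\int_{N^+}|\tilde\vphi|^2$, so the one-directional Poincar\'e inequality on the fixed reference half-neck does deliver exactly the $\e^{-2}$ rate, and you are right that one must discard $\pa_y\vphi$ rather than use the full gradient on the reference domain. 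The one genuine gap is the point you flag and then wave away. The paper assumes only that $f_1,f_2:[-1,1]\to(0,+\infty)$ are of class $C^{1,\gamma}$ --- there is no monotonicity hypothesis --- and your fallback claim that non-reachability ``affects only the numerical value of $C$'' is false. Horizontal reachability of $\Sigma=\{1\}\times(-f_2(1),f_1(1))$ from every point of $N^+$ is equivalent to $f_1,f_2$ being nondecreasing on $[0,1]$, and without it the one-directional inequality fails outright: if $f_1\equiv 1$ near $\tilde x=1$ but $f_1>2$ on some interior subinterval, the function $\tilde\vphi(\tilde x,\tilde y):=\eta(\tilde y)$ with $\eta\equiv 0$ on $(-\infty,3/2]$ and $\eta\equiv 1$ on $[2,+\infty)$ vanishes on $\Sigma$, has $\pa_{\tilde x}\tilde\vphi\equiv 0$, and is not identically zero. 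No choice of constant repairs this.

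To close the gap without extra hypotheses on $f_1,f_2$ you should replace the purely horizontal slices by $L$-shaped paths: from $(\tilde x_0,\tilde y_0)$ move vertically into the core band $\{|\tilde y|<\min_i\min_{[-1,1]}f_i\}$ (which is entirely contained in $N^+$ and meets $\Sigma$), then horizontally to $\Sigma$; averaging over the intermediate height and applying Cauchy--Schwarz and Fubini gives $\int_{N^+}|\tilde\vphi|^2\leq C\int_{N^+}\bigl(|\pa_{\tilde x}\tilde\vphi|^2+|\pa_{\tilde y}\tilde\vphi|^2\bigr)$ with $C=C(f_1,f_2)$. Undoing the anisotropic rescaling, the two derivative terms scale differently and one obtains
\begin{equation*}
\int_{N^+_\e}|\nabla\vphi|^2\,dxdy\;\geq\;\frac{C_1}{\max\{\e,\de\}^2}\int_{N^+_\e}|\vphi|^2\,dxdy\,.
\end{equation*}
This coincides with the stated $\e^{-2}$ rate whenever $\de\lesssim\e$, but in the thick-neck regime $\de/\e\to+\infty$ it is genuinely weaker, and indeed the rescaled version of the counterexample above shows the $\e^{-2}$ bound cannot hold there for a bumpy profile; so in full generality the lemma needs either the monotonicity of $f_1,f_2$ on $[0,1]$ or the weaker constant. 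For the lemma's only use in the paper --- deducing \eqref{ahecco} in Step 1 of the proof of Theorem~\ref{th:locmin} --- the rate $\max\{\e,\de\}^{-2}$ is entirely sufficient, since both parameters tend to zero. You should either state the monotonicity you are implicitly using or run the $L$-shaped-path version.
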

 
\begin{proof}[Proof of Theorem~\ref{th:locmin}]
We split the proof into theree steps.

\noindent {\bf Step 1.} ({\it Positive definiteness of the second variation})
We start by assuming that there exists $M>0$ such that
\beq\label{wcomodo}
|W''(t)|\leq M\qquad\text{for all $t\in \R$.}
\eeq
 Given $u$, $\vphi\in H^1(\Om_\e)$, and $\Om\subset \Om_\e$ we define the second variation of $F(\cdot, \Om)$ at $u$ with respect to the direction
  $\vphi$ as
  $$
  \pa^2 F(u, \Om)[\vphi]:= \frac{d^2}{dt^2}F(u+t\vphi, \Om)|_{t=0}
  =\int_{\Om}|\nabla\vphi|^2\, dxdy+\int_{\Om}W''(u)\vphi^2dxdy\,.
  $$
  Set $\Om_\e^+:=\Om_\e\cap \{x>0\}$. 
 We claim that there exist $\eta^+_0>0$ (independent of $\e$) and $\e_0^+>0$  such that 
\beq\label{pa2>0}
\pa^2 F(v,\Om^+_\e)[\vphi]\geq\frac{W''(\beta)}2\|\vphi\|^2_{L^2(\Om^+_\e)}\qquad\text{for all $\vphi\in H^1(\Om_\e^+)$}
\eeq
provided that $\e\in (0, \e_0^+)$ and $\|v-u_\e\|_{L^1(\Om^+_\e)}\leq \eta_0^+$.
 To this aim, we argue by contradiction by assuming that there exist $\e_n\to 0$ and  $(v_n)$ such that 
 \beq\label{vien}
 \|v_n-u_{\e_n}\|_{L^1(\Om^+_{\e_n})}\to 0
 \eeq
 and for all $n\in\N$
 $$
 \pa^2 F(v_n,\Om^+_{\e_n})[\vphi]<\frac{W''(\beta)}2\|\vphi\|^2_{L^2(\Om^+_{\e_n})}\qquad\text{for some $\vphi\in H^1(\Om_{\e_n}^+)$.}
 $$
 Thus, if we set 
\beq\label{le+}
\lambda^+_n:=\min\left\{\pa^2F(v_n, \Om_\e^+)[\vphi] :\,\vphi\in H^1(\Om_\e^+),\, \|\vphi\|_{L^2(\Om_\e^+)}=1\right\}\,,
\eeq
we have 
\beq\label{limle}
\liminf_{n\to +\infty}\lambda_n^+\leq \frac{W''(\beta)}2\,.
\eeq
We may assume, without generality, that $\liminf_{n\to \infty}\lambda_n^+=\lim_{n\to \infty}\lambda_n^+$. Let $\vphi_n$ be a minimizer for the problem \eqref{le+} corresponding to $\e_n$ and note that 
\beq\label{lab*}
\sup_n\|\vphi_n\|_{H^1(\Om^+_{\e_n})}^2\leq 1+ \sup_n(\lambda^+_{n}+\|W''(u_{\e_n})\|_\infty)<+\infty\,.
\eeq
Thus, in particular, there exists $\vphi\in H^1(\Om^r)$ and a subsequence (not relabeled) such that
\beq\label{wtoloc}
\psi_n:=\vphi_n(\e_n+\cdot, \cdot)\wto \vphi 
\eeq
weakly in $H^1(\Om^r)$. 
We claim that 
\beq\label{svan}
\|\vphi\|_{L^2(\Om^r)}=1\,.
\eeq
To this aim, extend $\vphi_n|_{\Om_{\e_n}^r}$ to a function $\tilde \vphi_n\in H^1(\R^2)$ in such a way 
that 
$$
\|\tilde \vphi_n\|_{H^1(\R^2)}\leq C'\|\vphi_n\|_{H^1(\Om^r_{\e_n})}\,,
$$
with $C'$ independent of $n$, where we recall $\Om^r_{\e_n}=\Om^r+(\e,0)$.
Note that this is possible due to the regularity of $\pa\Om^r$. 

Fix $p>2$. Then,
\beq\label{embedding}
\int_{N^+_{\e_n}}\tilde \vphi_n^2\, dxdy \leq 
\Bigl(\int_{N^+_{\e_n}}\tilde \vphi_n^p\, dxdy\Bigr)^\frac{2}{p}|N_{\e_n}^+|^{1-\frac2p}\leq
c_p\|\vphi_{n}\|^2_{H^1(\Om^+_\e)}|N_{\e_n}^+|^{1-\frac2p}\to 0\,,
\eeq
where we used the imbedding of $H^1(\R^2)$ into $L^p(\R^2)$ and \eqref{lab*}. Moreover, 
\begin{align}\label{step2}
\int_{N^+_{\e_n}}|\nabla \vphi_n|^2\, dxdy &\geq \frac12
\int_{N^+_{\e_n}}|\nabla (\vphi_n-\tilde \vphi_n)|^2\, dxdy-
\int_{N^+_{\e_n}}|\nabla \tilde \vphi_n|^2\, dxdy\\
&\geq \frac{C_1}{\e_n^2}\int_{N^+_{\e_n}}|\vphi_n-\tilde \vphi_n|^2\, dxdy-C_2\,,\nonumber
\end{align}
where in the last inequality we have used Lemma~\ref{lm:epoinc} and again the fact that 
$\sup_n\|\tilde \vphi_n\|^2_{H^1(\R^2)}\leq C_2<+\infty$ thanks to \eqref{lab*}. Since the left-hand side of \eqref{step2}
is bounded, recalling \eqref{embedding}, we deduce
\beq\label{ahecco}
\int_{N^+_{\e_n}} \vphi_n^2\, dxdy\to 0\,.
\eeq
Thus, claim \eqref{svan} follows from  \eqref{wtoloc} observing that  
$\int_{\Om^r} \psi^2_n\, dxdy= 1- \int_{N^+_{\e_n}} \vphi_n^2\, dxdy$. 

Set now $w_n(x,y):= v_n(x+\e_n, y)$ and note that by \eqref{alphabeta0}-(ii) and \eqref{vien} we have 
$w_n\to \beta$ in $L^1(\Om^r)$. Thus, 
 by lower semicontinuity  and recalling also \eqref{le+} and \eqref{wtoloc}, we have
\begin{align*}
\liminf_{n\to\infty}\lambda_{\e_n}^+ & \geq 
\liminf_{n\to\infty}\int_{\Om_{\e_n}^r}|\nabla \vphi_n|^2\, dxdy+\int_{\Om^+_{\e_n}}W''(v_n)\vphi_n^2dxdy\\
&= \liminf_{n\to\infty}\int_{\Om^r}|\nabla \psi_n|^2\, dxdy+\int_{\Om^r}W''(w_n)\psi_n^2dxdy\\
&\geq  \int_{\Om^r}|\nabla \vphi|^2\, dxdy+W''(\beta)\int_{\Om^r}\vphi^2dxdy\geq W''(\beta)\,,
\end{align*}
where the equality is a consequence of \eqref{wcomodo} and \eqref{ahecco}, while the last inequality follows from the definition of $\lambda_0^+$ and \eqref{svan}. The above chain of inequalities contradicts \eqref{limle} and completes the proof of  \eqref{pa2>0}. An entirely similar argument shows that there exist $\eta^-_0>0$ (independent of $\e$) and $\e_0^->0$  such that 
$$
\pa^2 F(v,\Om^-_\e)[\vphi]\geq\frac{W''(\alpha)}2\|\vphi\|^2_{L^2(\Om^-_\e)}\qquad\text{for all $\vphi\in H^1(\Om_\e^-)$}
$$
provided that $\e\in (0, \e_0^-)$ and $\|v-u_\e\|_{L^1(\Om^-_\e)}\leq \eta_0^-$, where $\Om_\e^-:=\Om_\e\cap \{x<0\}$. Thus, setting $\e_0:=\min\{\e_0^-,\e_0^+ \}$, $\eta_0:=\min\{\eta_0^-,\eta_0^+ \}$, and $\lambda_0:=\min\{W''(\alpha),W''(\beta)\}$, we may assert that
\beq\label{pa2>0bis}
\pa^2 F(u_\e,\Om_\e)[\vphi]\geq\frac{\lambda_0}2\|\vphi\|^2_{L^2(\Om_\e)}\qquad\text{for all $\vphi\in H^1(\Om_\e)$}
\eeq
provided that $\e\in (0, \e_0)$ and $\|v-u_\e\|_{L^1(\Om_\e)}\leq \eta_0$.

\noindent {\bf Step 2.} ({\it Conclusion under assumption \eqref{wcomodo}}) Assume \eqref{wcomodo}. Fix $v\in H^1(\Om_\e)$ with $\|v-u_\e\|_{L^1(\Om_\e)}\leq \eta_0$ and
  set 
$f(t):=F(u_\e+t(v-u_\e),\Om_\e)$. Then, for $t\in (0,1)$ by \eqref{pa2>0bis} we have
$$
 f''(t)  = \pa^2 F(u_\e+t(v-u_\e),\Om_\e)[v-u_\e]\geq \frac{\lambda_0}2\|v-u_\e\|^2_{L^2(\Om_\e)}
 $$
 provided that $\e\in (0, \e_0)$.
Hence, also  recalling that $f'(0)=0$ due to the criticality of $u_\e$, we deduce
\begin{align*}
F(v,\Om_\e)&=f(1)=f(0)+\int_0^1(1-t)f''(t)\, dt\\
&\geq F(u_\e, \Om_\e)+\frac{\lambda_0}4\|u_\e-v\|^2_{L^2(\Om_\e)}\,,
\end{align*}
which yields the conlusion of the theorem under assumption \eqref{wcomodo}

\noindent {\bf Step 3.} ({\it The general case})
 We now remove the extra assumption \eqref{wcomodo}. To this aim, let $ \W:\R\to \R$ be a $C^2$ function such that 
  \begin{itemize}
  \item[(a)] $\W=W$ on $[-\overline M, \overline M]$ where $\overline M$ is the constant appearing in condition (a) of Definition~\ref{def:gcw};
  \item[(b)] $\W\leq W$ everywhere;
  \item[(c)] $|\W''|\leq M$ everywhere, for some $M>0$.
  \end{itemize}
  For every $u\in H^1(\Om_\e)$  define
 $$
  \F(u, \Om_\e):=\frac12\int_{\Omega_\e} |\nabla u|^2\, dx + \int_{\Omega_\e} \W(u) \, dx\,,
  $$
  and note that 
  $$
 F(v,\Om_\e)\geq  \F(v,\Om_\e)  \quad \text{for all $v\in H^1(\Om_\e)$ and } \F(u_\e,\Om_\e)=F(u_\e,\Om_\e)\,.
  $$
Then, by the previous step, there exist $\lambda_0>0$, $\eta_0>0$, and $\e_0>0$ such that
$$
 F(v,\Om_\e)\geq  \F(v,\Om_\e)\geq \F(u_\e, \Om_\e)+\frac{\lambda_0}4\|u_\e-v\|^2_{L^2(\Om_\e)}=F(u_\e, \Om_\e)+\frac{\lambda_0}4\|u_\e-v\|^2_{L^2(\Om_\e)}\,,
$$
provided that $\e\in (0, \e_0)$ and $\|v-u_\e\|_{L^1(\Om_\e)}\leq \eta_0$. This concludes the proof of the theorem.
\end{proof}
 \begin{remark}\label{rm:secvar}
 We highlight here the following well-known fact: if $u\in H^1(\Om)\cap L^{\infty}(\Om)$ is a critical point for $F(\cdot, \Om)$ and 
 $$
 \pa^2F(u,\Om)[\vphi]>0\qquad\text{for all $\vphi\in H^1(\Om)\setminus\{0\}$},
 $$
 then $u$ is an isolated local $L^1$-minimizer; i.e, there exists $\eta_0>0$ such that $F(v,\Om)>F(u,\Om)$ for all
 $v\in H^1(\Om)$ with $0<\|v-u\|_{L^1(\Om)}\leq \eta_0$. 
 This fact can be proved with arguments similar to the ones used in the proof of previous  theorem. More precisely, one first observes as before that  \eqref{wcomodo} may be assumed without loss of generality. 
 Then, one shows that the map
 $$
 v\in H^1(\Om)\mapsto \lambda(v):=\min\left\{\pa^2F(v, \Om)[\vphi] :\,\vphi\in H^1(\Om),\, \|\vphi\|_{L^2(\Om)}=1\right\}\
 $$
 is lower semicontinuous with respect to the $L^1$-convergence. This is similar to Step 1 of the previous proof and in fact easier since there is no $\e$-dependence. The conclusion then follows arguing as in Step~2 of the previous proof. 
 \end{remark} 
  In the following we show  that the existence of at least one family of geometrically constrained walls can be proven through a constrained minimization procedure, similar to the one used in \cite[Theorem 3.1]{KV}. For the reader's convenience we provide the full proof. To this aim, 
  for $\alpha$, $\beta\in V$ (see (W2)) and for $\e\in (0,1)$ define
  $$
  u_{0,\e}(x,y):=
  \begin{cases}
  \alpha & \text{if $(x,y)\in \Om_\e^l$,}\\
  \frac{\alpha+\beta}2 & \text{if $(x,y)\in N_\e$,}\\
  \beta & \text{if $(x,y)\in \Om_\e^r$.}\\
  \end{cases}
  $$
  Moreover, for $d>0$ set 
  \beq\label{bde}
  B_{d,\e}:=\{u\in H^1(\Om_\e):\, \|u-u_{0,\e}\|_{L^1(\Om_\e)}\leq d\}\,.
  \eeq
  \begin{theorem}[Existence of nearly locally constant critical points]\label{th:eu}
 For any $\alpha\neq\beta\in V$  there exists an admissible family $(u_\e)$ of nearly locally constant  critical points  as in Definition~\ref{def:gcw}. 
 
\end{theorem}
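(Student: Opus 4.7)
The plan is to obtain $u_\e$ as a minimizer of $F(\cdot,\Om_\e)$ on the constraint set $B_{d,\e}$ defined in \eqref{bde}, for some $d>0$ small but independent of $\e$, and then to prove that for all sufficiently small $\e$ the constraint is inactive, so that $u_\e$ lies in the interior of $B_{d,\e}$ and hence solves the unconstrained Euler--Lagrange equation \eqref{EL}. This mirrors the scheme of \cite[Theorem~3.1]{KV} adapted to the present two-dimensional setting.

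Existence of the constrained minimizer is straightforward: a smooth competitor $\tilde u_\e\in B_{d,\e}$ taking the value $\alpha$ on $\Om_\e^l$, the value $\beta$ on $\Om_\e^r$, and interpolating linearly in $x$ across $N_\e$ is easily constructed. Up to first replacing $W$ by a truncation $\F$ with the properties (a)--(c) used in Step~3 of the proof of Theorem~\ref{th:locmin} (so that the truncated potential has quadratic growth at infinity), minimizing sequences in $B_{d,\e}$ are uniformly bounded in $H^1(\Om_\e)$, and weak lower semicontinuity yields a minimizer $u_\e\in B_{d,\e}$.

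The core step is to show that $\|u_\e-u_{0,\e}\|_{L^1(\Om_\e)}<d$ for $\e$ small. Suppose, for contradiction, that for some $\e_n\to 0$ the constraint is saturated. The upper bound $F(u_{\e_n},\Om_{\e_n})\leq F(\tilde u_{\e_n},\Om_{\e_n})$ forces uniform $H^1$-bounds on each bulk, so the shifted restrictions $u_{\e_n}(\cdot\mp(\e_n,0))$ converge weakly in $H^1(\Om^l)$ and $H^1(\Om^r)$ to limits $u^l$ and $u^r$. By weak lower semicontinuity and the fact that $|N_{\e_n}|\to 0$, together with comparison against fixed competitors on the bulks, these limits are $L^1$-minimizers of $F(\cdot,\Om^l)$ and $F(\cdot,\Om^r)$ subject to $\|u^l-\alpha\|_{L^1(\Om^l)}\leq d$ and $\|u^r-\beta\|_{L^1(\Om^r)}\leq d$, respectively. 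Since $\pa^2 F(\alpha,\Om^l)[\vphi]\geq W''(\alpha)\|\vphi\|^2_{L^2(\Om^l)}>0$ for every nonzero $\vphi$, Remark~\ref{rm:secvar} applied to the constant critical points $u\equiv\alpha$ and $u\equiv\beta$ says exactly that $\alpha$ is a strict $L^1$-local minimizer of $F(\cdot,\Om^l)$ and $\beta$ of $F(\cdot,\Om^r)$. Choosing $d$ small enough (depending only on $\alpha$, $\beta$, $W$ and the bulks), we must then have $u^l\equiv\alpha$ and $u^r\equiv\beta$, which forces $\|u_{\e_n}-u_{0,\e_n}\|_{L^1(\Om_{\e_n})}\to 0$, contradicting the saturation. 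Therefore the constraint is inactive and $u_\e$ is a free critical point satisfying \eqref{EL}.

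Admissibility in the sense of Definition~\ref{def:gcw} follows from the same compactness-and-rigidity argument applied to an arbitrary subsequence: any sub-subsequence converges in $L^1$ on the bulks to $\alpha$ and $\beta$, so the full family does, giving condition~(b). Condition~(a) holds immediately under (W3) by Remark~\ref{rm:w3}; in the general case, one truncates $W$ to enforce (W3), applies the maximum principle to \eqref{EL} to deduce $\|u_\e\|_\infty\leq\overline M$ for a constant $\overline M$ independent of $\e$, and notes that the truncated and original problems agree on $\{|u|\leq\overline M\}$. The principal technical obstacle I anticipate is in the contradiction step: one must decouple the fixed bulk problems from the vanishing neck, controlling both the energy contribution of $N_\e$ and its contribution to the $L^1$-constraint. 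The weighted Poincar\'e inequality of Lemma~\ref{lm:epoinc}, already exploited in Step~1 of the proof of Theorem~\ref{th:locmin}, is tailor-made for this purpose and the analogous estimate will suffice here.
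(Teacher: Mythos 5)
Your overall strategy is the paper's: minimize a suitably truncated functional over the constraint set $B_{d,\e}$, show by a compactness/squeeze argument that the constraint is asymptotically inactive (using that $\alpha$ and $\beta$ are isolated $L^1$-local minimizers of the bulk functionals, via Remark~\ref{rm:secvar}), and conclude that $u_\e$ is a genuine critical point of $F(\cdot,\Om_\e)$. The differences in the truncation of $W$ (you enforce a (W3)-type modification and invoke the maximum principle; the paper imposes monotonicity of $\widetilde W$ outside $[\min\{\alpha,\beta\},\max\{\alpha,\beta\}]$ and truncates the constrained minimizer directly) are inessential.

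The genuine gap is your competitor. The squeeze that forces $u^l_*\equiv\alpha$ and $u^r_*\equiv\beta$ requires $\lim_{\e\to0}F(\tilde u_\e,\Om_\e)=W(\alpha)|\Om^l|+W(\beta)|\Om^r|$ \emph{exactly}: any positive excess $c$ in the limit only yields $F(u^l_*,\Om^l)+F(u^r_*,\Om^r)\le W(\alpha)|\Om^l|+W(\beta)|\Om^r|+c$, which is compatible with $u^l_*\neq\alpha$, since by the second-variation estimate the energy excess of competitors near $\alpha$ can be arbitrarily small. Your $\tilde u_\e$ interpolates linearly in $x$ across $N_\e$, so $|\nabla\tilde u_\e|\sim(\beta-\alpha)/(2\e)$ on a set of area of order $\e\de$, giving $\int_{N_\e}|\nabla\tilde u_\e|^2\sim C\,\de/\e$. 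This vanishes only in the thin-neck regime; it tends to a positive constant in the normal-neck regime and diverges in the thick-neck regime ($\de/\e\to\infty$), i.e.\ the upper bound fails exactly where it is needed. This is the two-dimensional capacity issue the whole paper is organized around: the correct competitor $\xi_\e$ is \emph{constant} equal to $\tfrac{\alpha+\beta}2$ on the entire neck (so the neck contributes no Dirichlet energy) and performs the transition inside the bulks through the harmonic logarithmic profile $h_\e$ on the annuli $B_{\de^\gamma}\setminus\overline{B_{M\de}}$ centered at the neck mouths, whose Dirichlet energy is of order $1/|\ln\de|\to0$. With $\xi_\e$ in place of your $\tilde u_\e$ the rest of your argument goes through; note that this construction is also reused later for the sharp upper bounds \eqref{recovery} and \eqref{eub}, so it cannot be bypassed. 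A minor remark: Lemma~\ref{lm:epoinc} is not needed to decouple the neck here; the uniform $L^\infty$ bound on the constrained minimizer already gives $\bigl|\int_{N_\e}W(u_\e)\,dxdy\bigr|\le C|N_\e|\to0$ and $\|u_\e-u_{0,\e}\|_{L^1(N_\e)}\le C|N_\e|\to0$.
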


\begin{proof}
We introduce a potential $\widetilde W$ of class $C^1$, with the following properties:
\begin{enumerate}
\item[(a)] $\widetilde W (u) = W (u)$ for $\min\{\alpha,\beta\} \leq u \leq \max\{\alpha,\beta\}$;
\item[(b)] $\widetilde W' (u) <0$ for $u <\min\{\alpha,\beta\}$ and $\widetilde W' (u) > 0$ for $u > \max\{\alpha,\beta\}$.
\end{enumerate}
Accordingly, we consider the  energy functional 
\beq\label{tildeffe0}
\widetilde F (u, \Omega) := \frac12\int_{\Omega} |\nabla u|^2\, dx + \int_{\Omega} \widetilde W(u) \, dx\,.
\eeq
Let us fix $d>0$ so small that for all $u \in H^1(\Omega^l)$, with $0<\|u -\alpha\|_{L^1(\Omega^l)} \leq d$ we have $\widetilde F (u, \Omega^l) > \widetilde F (\alpha, \Omega^l)$, and for all $v \in H^1(\Omega^r)$, with $0<\|v - \beta \|_{L^1(\Omega^r)} \leq d$ we have   $\widetilde F (v, \Omega^l) > \widetilde F (\beta, \Omega^l)$. This is possible  since the constant functions $\alpha$ and $\beta$ are isolated local minimizers of $\widetilde F (\cdot, \Omega^l) $ and $\widetilde F (\cdot, \Omega^r)$, respectively (see Remark~\ref{rm:secvar}).

Let $u_\e$  be a minimizer of the problem
\beq\label{tildeffe}
\min_{u_\e \in B_{d,\e} } \widetilde F (u, \Omega_\e)\,,
\eeq
where $B_{d,\e}$ is the set defined in \eqref{bde}.
We would like to show that there exists $\e_0>0$ such that for all $\e <\e_0$ the function $u_\e$ is an $L^1$-local minimizer of $\widetilde F (\cdot, \Omega_\e)$. In order to do this we adapt the arguments of \cite[Theorem 1]{KV}. 

Using property (b) of $\widetilde W$ and a truncation argument it is straightforward to show that
\beq\label{limitate}
\min\{\alpha,\beta\} \leq u_\e \leq \max\{\alpha,\beta\}.
\eeq
We also notice that if $u_\e$ lies in the interior of $B_{d,\e}$ then it is an $L^1$-local minimizer of $\widetilde F (u, \Omega_\e)$. In fact, we claim that 
\beq\label{lab**}
\lim_{\e\to 0}\|u_{\e} - u_{0,\e}\|_{L^1(\Omega_\e)} =0\,.
\eeq
Let $M:= \max\{ f_1 (\pm 1), f_2 (\pm 1) \} +1$, $\gamma\in (0,1)$, and consider the following test function
$$
\xi_{\e} (x,y) := \begin{cases}
\alpha & \text{ if $|(x+\e, y)| \geq \de^\gamma$ and $ x < -\e$ } \\
{\alpha + \beta \over 2} - h_\e(x+\e, y) & \text{ if $|(x+\e, y)| < \de^\gamma$ and $ x< -\e$ } \\
{\alpha + \beta \over 2} & \text{ if  $ -\e \leq x \leq \e$ } \\
{\alpha + \beta \over 2} + h_\e(x-\e, y)& \text{ if $|(x-\e, y)| \leq \de^\gamma$ and $ x>\e$ } \\
\beta & \text{ if $|(x-\e, y)| > \de^\gamma$ and $ x>\e$, } 
\end{cases}
$$
where $h_\e \, :\,  \R^2 \to \R$ satisfies
$$
\begin{cases}
\Delta h_\e(x,y) =0 & \text{ for $(x,y) \in  B_{\delta^\gamma}(0,0) \setminus \overline{B_{M \delta}(0,0)}$} \\
h_\e(x,y) = 0 &  \text{ for $(x,y) \in  \bar B_{M \delta}(0,0)$}\\
h_\e(x,y) = \frac{\beta - \alpha}{2} &  \text{ for $(x,y) \in  \R^2 \setminus B_{\delta^\gamma}(0,0)$}.
\end{cases}
$$
Note that the function $h_\e$ in $ B_{\delta^\gamma}(0,0) \setminus\overline{B_{M \delta}(0,0)}$ is explicitly given by
$$
h_\e(x,y)=\frac{\beta-\alpha}{2(\log\de^{\gamma-1}-\log M)}\log\frac{|(x,y)|}{M\de}\,.
$$
It is easy to check that $\| \xi_{\e} -u_{0,\e} \|_{L^1(\Omega_e)} \to 0$ as  $\e \to 0$. Moreover, a direct computation shows 
$$
\lim_{\e \to0}\widetilde F (\xi_{\e}, \Omega_\e)=W(\alpha) |\Omega^l| + W(\beta) |\Omega^r|\,.
$$
Therefore, by the minimality of $u_\e$, we have
\beq \label{lsF}
\limsup_{\e\to0} \widetilde F(u_{\e_k}, \Omega_{\e_k}) \leq \lim_{\e\to0} \widetilde F(\xi_{\e_k}, \Omega_{\e_k}) = W(\alpha) |\Omega^l| + W(\beta) |\Omega^r|\,.
\eeq
Fix now any sequence $\e_k\to 0$ and 
 define
$$
u_k^l (x, y):= u_{\e_k} (x -\e_k, y), \text{ for } (x, y) \in \Omega^l,
$$
$$
u_k^r (x, y):= u_{\e_k} (x +\e_k, y), \text{ for } (x, y) \in \Omega^r.
$$
It is clear that both sequences are bounded in $H^1$ and therefore, up to a subsequence (not relabeled), we may assume $u_k^l \wto u^l_*$  and $u_k^r \wto u^r_*$ weakly in $H_1 (\Omega^l)$ and  $H_1 (\Omega^r)$, respectively, with $\|u^l_*-\alpha\|_{L^{1}(\Om^l)}\leq d$ and $\|u^r_*-\beta\|_{L^{1}(\Om^r)}\leq d$. 
 Recalling \eqref{limitate}, note that
$$
 \widetilde F(u_{\e_k}, \Omega_{\e_k}) \geq \widetilde F(u_k^l, \Omega^l) + \widetilde F(u_k^r, \Omega^r)
 -|N_{\e_k}| \sup_{ |t| \leq \max\{|\alpha|,|\beta|\}}|W(t)|\, \,. 
$$
Thus, using also \eqref{lsF}, we obtain
$$
W(\alpha) |\Omega^l| + W(\beta)|\Omega^r| \geq \liminf \widetilde F(u_{\e_k}, \Omega_{\e_k}) \geq  \widetilde F(u_*^l, \Omega^l) +  \widetilde F(u_*^r, \Omega^r) \geq W(\alpha) |\Omega^l| + W(\beta)|\Omega^r|. 
$$
Since $\alpha$ and $\beta$ are isolated local minimizers of $ \widetilde F(\cdot, \Omega^l)$ and $ \widetilde F(\cdot, \Omega^r)$,  the above chain of inequalities implies  that $u_*^l =\alpha$ and $u_*^r =\beta$. But then, 
$\|u_{\e_k} - u_{0,\e_k}\|_{L^1(\Omega_{\e_k})}\to 0$ and claim \eqref{lab**} is established. 
Thus, $u_\e$ is a local minimizer and, in turn, a critical point  of $\widetilde F(\cdot, \Omega_\e)$ for $\e$ small enough. Recalling  property (a) satisfied by  $\widetilde W$ and \eqref{limitate}, it plainly follows that $u_\e$ is also a critical point of $F(\cdot, \Omega_\e)$. It is now clear that the family $(u_\e)$ satisfies all the properties
 stated in Definition~\ref{def:gcw}.
\end{proof}
\begin{remark}[Bridge Principle]\label{rm:bridge}
More generally, by similar arguments one could prove the following {\em bridge principle}: If $u^l\in H^1(\Om^l)\cap
 L^{\infty}(\Om^l)$ and $u^r\in H^1(\Om^r)\cap
 L^{\infty}(\Om^r)$ are isolated $L^1$-local minimizers of $F(\cdot, \Omega^l)$ and $F(\cdot, \Omega^r)$, respectively, then  there exists a family $(u_\e)$ such that $u_\e$ is an $L^1$-local minimizer of  $F(\cdot, \Om_\e)$ for $\e$ small enough and 
$$
\|u_\e(\e+\cdot, \cdot)-u^l\|_{L^1(\Om^l)}\to 0\,, \qquad \|u_\e(\cdot-\e, \cdot)-u^r\|_{L^1(\Om^r)}\to 0\,,
$$
as $\e\to 0^+$.  The local minimizers $u_\e$ can be constructed by the same constrained minimization procedure employed above; i.e., as  solutions to \eqref{tildeffe}, where $\widetilde F$ is defined as in \eqref{tildeffe0} and $\widetilde W$ satisfies {\em (a)} and {\em (b)} with $\alpha$ and $\beta$ replaced by $\|u^l\|_\infty$ and $\|u^r\|_\infty$, respectively, 
and $B_{d,\e}$ is as in \eqref{bde}, with $u_{0,\e}$ given by
$$
  u_{0,\e}(x,y):=
  \begin{cases}
  u^l & \text{if $(x,y)\in \Om_\e^l$,}\\
  \frac{(u^l+u^r)(0,0)}2 & \text{if $(x,y)\in N_\e$,}\\
  u^r & \text{if $(x,y)\in \Om_\e^r$.}\\
  \end{cases}
  $$
  Then, by similar arguments, one can show that  \eqref{lab**} still holds. We leave the details to the interested reader.
\end{remark}
  Next we show that given $\alpha$, $\beta\in V$, the corresponding admissible family of critical points as in Definition~\ref{def:gcw} is unique. More precisely, we have:
  \begin{theorem}[Uniqueness of nearly locally constant critical points]\label{th:uniqueness}
  Fix $\alpha$, $\beta\in V$ and $\overline M\geq\max\{|\alpha|, |\beta|\}$, and let  $\e_0>0$ and $\eta_0>0$ be the corresponding constants provided by  Theorem~\ref{th:locmin}. Then, there exixts $0<\e_1\leq\e_0$ depending only on $\alpha$, $\beta$ and $\overline M$ such that for all $0<\e\leq\e_1$ there is a unique critical point $u_\e$ of $F(\cdot, \Om_\e)$ with the property that $\|u_\e\|_{L^{\infty}(\Om_\e)}\leq \overline M$, $\|u_\e-\alpha\|_{L^1(\Om^l_\e)}\leq \frac{\eta_0}8$ and $\|u_\e-\beta\|_{L^1(\Om^r_\e)}\leq \frac{\eta_0}8$.
  \end{theorem}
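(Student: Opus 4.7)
The approach is by contradiction. Suppose the conclusion fails: then we can extract a sequence $\e_n\searrow 0$ and, for each $n$, two distinct critical points $u_n,v_n\in H^1(\Om_{\e_n})$ of $F(\cdot,\Om_{\e_n})$, both satisfying the three hypotheses of the theorem. My plan is to show that $(u_n)$ and $(v_n)$ are both admissible nearly locally constant families in the sense of Definition~\ref{def:gcw}, apply Theorem~\ref{th:locmin} to each, and derive a contradiction by an $L^1$-comparison.

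Upgrading the fixed bounds $\eta_0/8$ to genuine convergence is the first nontrivial step. Since $\Delta u_n=W'(u_n)$ and $\|u_n\|_\infty\leq\overline M$, elliptic estimates yield a uniform $H^1$-bound on the translates $u_n^l:=u_n(\cdot-\e_n,\cdot)$ on $\Om^l$, and Rellich compactness gives, along a subsequence, weak $H^1$- and strong $L^1$-convergence to some Neumann critical point $u_*^l$ of $F(\cdot,\Om^l)$ with $\|u_*^l-\alpha\|_{L^1(\Om^l)}\leq\eta_0/8$. I then claim $u_*^l=\alpha$. Indeed, since $\alpha\in V$ one has $\pa^2 F(\alpha,\Om^l)[\vphi]\geq W''(\alpha)\|\vphi\|_{L^2}^2>0$ for every nonzero $\vphi$; by the $L^1$-lower semicontinuity of the smallest eigenvalue of $\pa^2 F(\cdot,\Om^l)$ (the mechanism recalled in Remark~\ref{rm:secvar} and driving the construction of $\eta_0$ in Theorem~\ref{th:locmin}), the radius $\eta_0/8$ is small enough to force $\pa^2 F(u_*^l,\Om^l)>0$. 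Remark~\ref{rm:secvar} then makes both $\alpha$ and $u_*^l$ isolated $L^1$-local minimizers of $F(\cdot,\Om^l)$: if they differed, the strict minimality of each would yield the incompatible inequalities $F(u_*^l,\Om^l)>F(\alpha,\Om^l)$ and $F(\alpha,\Om^l)>F(u_*^l,\Om^l)$. Hence $u_*^l=\alpha$; uniqueness of the subsequential limit promotes this to convergence of the whole sequence. The symmetric argument on $\Om^r$, and for $v_n$, establishes the admissibility of both families.

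With admissibility in hand, Theorem~\ref{th:locmin} applies to $(u_n)$ and $(v_n)$: for every $n$ with $\e_n\leq\e_0$, each is an isolated $L^1$-local minimizer of $F(\cdot,\Om_{\e_n})$ in the ball of radius $\eta_0$. The triangle inequality, together with the pointwise estimate $|u_n-v_n|\leq 2\overline M$ on $N_{\e_n}$, yields
\[
\|u_n-v_n\|_{L^1(\Om_{\e_n})}\leq\tfrac{\eta_0}{2}+2\overline M\,|N_{\e_n}|,
\]
and since $|N_{\e_n}|\leq 2\e_n\de(\e_n)\|f_1+f_2\|_\infty\to 0$, for $n$ large enough $0<\|u_n-v_n\|_{L^1(\Om_{\e_n})}\leq\eta_0$. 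Strict local minimality of $u_n$ forces $F(v_n,\Om_{\e_n})>F(u_n,\Om_{\e_n})$, while strict local minimality of $v_n$ forces $F(u_n,\Om_{\e_n})>F(v_n,\Om_{\e_n})$: contradiction.

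The main obstacle is the rigidity identification $u_*^l=\alpha$ (and $u_*^r=\beta$): it rests on $\eta_0/8$ lying inside the $L^1$-neighborhood on which $\pa^2 F(\cdot,\Om^l)$ stays positive definite. Fortunately this is automatic from the construction of $\eta_0$ in Theorem~\ref{th:locmin}, which is produced by the very same lower-semicontinuity-and-contradiction scheme applied uniformly in $\e$; the factor $1/8$ gives comfortable room and no further shrinking of constants is required. The remaining ingredients (weak compactness, passage to the limit in the Euler--Lagrange equation, and the neck-volume estimate) are routine.
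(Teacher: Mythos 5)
Your endgame coincides exactly with the paper's proof, which consists of nothing more than the triangle inequality
$\|u_\e-v_\e\|_{L^1(\Om_\e)}\leq \eta_0/2+2\overline M|N_\e|<\eta_0$ followed by a double application of the strict minimality in \eqref{eq:locmin}, yielding the incompatible inequalities $F(v_\e,\Om_\e)>F(u_\e,\Om_\e)>F(v_\e,\Om_\e)$. What you add --- and the paper omits entirely --- is the preliminary compactness/rigidity step showing that any sequence of critical points satisfying the fixed bounds $\eta_0/8$ actually converges to $\alpha$ and $\beta$ in the two bulks, hence forms an admissible family in the sense of Definition~\ref{def:gcw}. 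This step is a legitimate (and arguably necessary) way to bridge the literal mismatch between the hypotheses of Theorem~\ref{th:locmin}, which are asymptotic, and the hypotheses of the uniqueness statement, which are fixed $L^1$-bounds for a single $\e$; the paper instead leans on the clause that $\e_0$ and $\eta_0$ ``depend only on $\alpha$, $\beta$, $\overline M$'', i.e.\ on the fact that the proof of Theorem~\ref{th:locmin} really establishes positive definiteness of $\pa^2F(v,\Om_\e^\pm)$ for every $v$ in a uniform $L^1$-neighborhood of the locally constant profile, not just of a particular family. So your route is the same in substance but more scrupulous about the logic of applying Theorem~\ref{th:locmin}.

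The one soft spot is the assertion that ``$\eta_0/8$ lies inside the $L^1$-neighborhood on which $\pa^2F(\cdot,\Om^l)$ stays positive definite''. This is not a consequence of the \emph{statement} of Theorem~\ref{th:locmin}: the radius $\eta_0^\pm$ constructed there governs the second variation on the $\e$-dependent half-domains $\Om_\e^\pm$, and transferring it to the fixed bulk $\Om^l$ (so that your mutual-isolation argument between $\alpha$ and $u_*^l$ applies) requires re-running the lower-semicontinuity scheme of Remark~\ref{rm:secvar} on $\Om^l$ centered at the constant $\alpha$, and checking that the resulting isolation radius dominates $\eta_0/8$. This is easily arranged (one may always take $\eta_0$ smaller in Theorem~\ref{th:locmin} so that both constraints hold simultaneously, and all statements remain consistent), but as written it is asserted rather than proved. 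Apart from this bookkeeping issue, your argument is correct.
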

  \begin{proof}
 Choose $\e_1\in (0, \e_0)$ be so small that $\frac{\eta_0}2+2\overline M|N_\e|<\eta_0$ for all $0<\e\leq\e_1$.  For $0<\e\leq \e_1$, let $u_\e$ and $v_\e$ be two critical points with all the required properties. Then, in particular, $\|u_\e-v_\e\|_{L^1(\Om_\e)}\leq  \|u_\e-\alpha\|_{L^1(\Om^l_\e)}+\|v_\e-\alpha\|_{L^1(\Om^l_\e)}+\|u_\e-\beta\|_{L^1(\Om^r_\e)}+\|v_\e-\beta\|_{L^1(\Om^r_\e)}+2\overline M|N_\e|\leq\frac{\eta_0}2+2\overline M|N_\e|< \eta_0$. Thus, by Theorem~\ref{th:locmin}, we have $F(v_\e, \Om_\e)> F(u_\e, \Om_\e)$ and  $F(u_\e, \Om_\e)> F(v_\e, \Om_\e)$, that is impossible.
  \end{proof}
As an immediate consequence of the previous theorem, we have:
 \begin{corollary}\label{cor:ab}
 If $(u_\e)_\e$ is a family of critical points as in Definition~\ref{def:gcw}, with $\alpha=\beta$, then for $\e$ small enough we have $u_\e\equiv\alpha$.
 \end{corollary}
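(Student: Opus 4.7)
The plan is to view the constant function $\alpha$ as a competing critical point satisfying the same proximity conditions as $u_\e$, and then invoke the uniqueness result Theorem~\ref{th:uniqueness} (applied with the two wells chosen to coincide) to force $u_\e\equiv\alpha$.

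First, I would note that since $\alpha\in V$ we have $W'(\alpha)=0$, so the constant function $\alpha$ is a (classical) critical point of $F(\cdot,\Om_\e)$ for every $\e>0$. It trivially satisfies
$\|\alpha\|_{L^\infty(\Om_\e)}=|\alpha|$ and $\|\alpha-\alpha\|_{L^1(\Om^l_\e)}=\|\alpha-\alpha\|_{L^1(\Om^r_\e)}=0$. Hence, enlarging $\overline M$ if necessary so that $\overline M\geq|\alpha|$ (which only enlarges the bound in condition (a) of Definition~\ref{def:gcw} and is harmless), the constant $\alpha$ is a critical point meeting every hypothesis in Theorem~\ref{th:uniqueness}.

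Next, I would observe that, by condition (b) of Definition~\ref{def:gcw} applied with $\beta=\alpha$, we have $\|u_\e-\alpha\|_{L^1(\Om^l_\e)}\to 0$ and $\|u_\e-\alpha\|_{L^1(\Om^r_\e)}\to 0$. Therefore, for all sufficiently small $\e$, both of these norms lie below the threshold $\eta_0/8$ that appears in Theorem~\ref{th:uniqueness}. Since at the same time $\|u_\e\|_{L^\infty(\Om_\e)}\leq\overline M$ by condition (a), the function $u_\e$ itself also satisfies the hypotheses of Theorem~\ref{th:uniqueness}. I would then apply Theorem~\ref{th:uniqueness} with both parameters set equal to $\alpha$ (the theorem's statement only asks that $\alpha,\beta\in V$, with no distinctness requirement, and its proof via Theorem~\ref{th:locmin} does not use $\alpha\neq\beta$ in any essential way), which asserts that for $\e\leq\e_1$ there is a \emph{unique} critical point fulfilling those three bounds. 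The constant $\alpha$ and $u_\e$ both qualify, so $u_\e\equiv\alpha$ for all $\e$ small enough.

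The only potential point of concern is the implicit use of Theorem~\ref{th:uniqueness} (and hence Theorem~\ref{th:locmin}) in the borderline case $\alpha=\beta$, which is outside the literal scope of Definition~\ref{def:gcw}; however, inspecting the proof of Theorem~\ref{th:locmin} shows that the positivity $W''(\alpha)>0$, $W''(\beta)>0$ is the only property of the wells used, so the second-variation lower bound \eqref{pa2>0bis} and the ensuing quadratic-growth estimate remain valid, and the uniqueness argument in Theorem~\ref{th:uniqueness} goes through verbatim. No further ingredients are needed, and there is no substantive obstacle beyond this bookkeeping remark.
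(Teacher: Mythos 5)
Your proposal is correct and is essentially the argument the paper intends: the corollary is presented as an immediate consequence of Theorem~\ref{th:uniqueness}, obtained by noting that the constant $\alpha$ is itself a critical point satisfying the $L^\infty$ and $L^1$ proximity hypotheses, so uniqueness forces $u_\e\equiv\alpha$ for small $\e$. Your extra remark that the case $\alpha=\beta$ lies outside the literal wording of Definition~\ref{def:gcw} but that the proofs of Theorems~\ref{th:locmin} and~\ref{th:uniqueness} only use $W''(\alpha)>0$ and $W''(\beta)>0$ is accurate and harmless bookkeeping.
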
  
 If the potential $W$ satisfies (W3) of  Remark~\ref{rm:w3}, then the following holds.
   \begin{corollary}[Uniqueness under assumption (W3)]\label{cor:uniqueness}
   Assume that the potential  $W$ also satisfies  (W3) of Remark~\ref{rm:w3}. Then for any $\alpha$, $\beta\in V$ there exist $\e_1>0$ and $\eta_1>0$ such that for all $0<\e\leq\e_1$ there is a unique critical point $u_\e$ of $F(\cdot, \Om_\e)$ with the property that  $\|u_\e-\alpha\|_{L^1(\Om^l_\e)}\leq \eta_1$ and $\|u_\e-\beta\|_{L^1(\Om^r_\e)}\leq \eta_1$.
\end{corollary}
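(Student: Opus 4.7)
The strategy is to reduce Corollary~\ref{cor:uniqueness} directly to Theorem~\ref{th:uniqueness}, using (W3) to provide for free the $L^\infty$-bound assumed a priori there. The key observation, already recorded in Remark~\ref{rm:w3}, is that under (W3) every critical point of $F(\cdot, \Om_\e)$ automatically satisfies $\|u\|_{L^\infty(\Om_\e)} \leq \overline M$, where $\overline M$ is the constant from (W3). Thus the $L^\infty$ control required in Theorem~\ref{th:uniqueness} ceases to be a genuine restriction and becomes a consequence of the equation itself.

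Concretely, I would proceed as follows. First, I replace $\overline M$ by $\overline M' := \max\{\overline M, |\alpha|, |\beta|\}$ so that $\alpha$ and $\beta$ themselves respect the bound. I then let $\e_1$ and $\eta_0$ be the constants produced by Theorems~\ref{th:locmin} and \ref{th:uniqueness} for this $\overline M'$, and set $\eta_1 := \eta_0/8$. For any $0 < \e \leq \e_1$ and any critical point $u_\e$ of $F(\cdot, \Om_\e)$ satisfying $\|u_\e - \alpha\|_{L^1(\Om^l_\e)} \leq \eta_1$ and $\|u_\e - \beta\|_{L^1(\Om^r_\e)} \leq \eta_1$, the $L^\infty$-bound coming from (W3) places $u_\e$ within the class of critical points for which Theorem~\ref{th:uniqueness} guarantees uniqueness, and the conclusion follows at once.

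The only step that genuinely requires verification is the maximum principle argument sketched in Remark~\ref{rm:w3}. A clean way to carry it out is to test the weak Euler--Lagrange equation \eqref{ELw} against $\vphi := (u_\e - \overline M)^+ \in H^1(\Om_\e)$. By (W3) we have $W'(u_\e)\,\vphi \geq 0$ pointwise, so
\[
\int_{\Om_\e} |\nabla (u_\e - \overline M)^+|^2 \, dxdy = -\int_{\Om_\e} W'(u_\e)(u_\e - \overline M)^+ \, dxdy \leq 0,
\]
which forces $(u_\e - \overline M)^+$ to be constant on the connected set $\Om_\e$. If this constant were strictly positive, then $W'(u_\e) > 0$ everywhere and testing \eqref{ELw} against $\vphi \equiv 1$ would yield $\int_{\Om_\e} W'(u_\e)\, dxdy = 0$, a contradiction; hence $u_\e \leq \overline M$. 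An entirely symmetric argument using $(u_\e + \overline M)^-$ gives $u_\e \geq -\overline M$.

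No real obstacle is expected: all the analytic work has been done in Theorems~\ref{th:locmin} and~\ref{th:uniqueness}, and the role of (W3) is simply to promote the a priori $L^\infty$-hypothesis of Theorem~\ref{th:uniqueness} to a universal property of all critical points. The corollary is therefore essentially a bookkeeping consequence of the preceding two results.
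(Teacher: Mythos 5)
Your argument is exactly the paper's: Corollary~\ref{cor:uniqueness} is deduced from Theorem~\ref{th:uniqueness} by observing that, under (W3), Remark~\ref{rm:w3} makes the $L^\infty$-bound automatic for every critical point, so the a priori hypothesis of Theorem~\ref{th:uniqueness} is vacuous. Your explicit verification of the maximum principle via testing \eqref{ELw} with $(u_\e-\overline M)^+$ (and the symmetric truncation from below) correctly fills in the detail the paper only sketches, so the proof is complete and correct.
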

\begin{proof}
The statement is a straightforward consequence of Theorem~\ref{th:uniqueness}, after recalling that by Remark~\ref{rm:w3} any critical point has $L^\infty$-norm bounded by $\overline M$.
\end{proof}
 We conclude the section by showing that under convexity assumptions on the bulk regions $\Om^l$ and $\Om^r$ and some natural structural assumptions on the potential $W$,  a \emph{complete classification of stable critical points} can be given. To this aim, we recall the following notion of stability.
  \begin{definition}\label{def:stable}
A critical point $u_\e$ of $F(\cdot, \Om_\e)$ is called \em{stable}, if the second variation of 
$F(\cdot, \Om_\e)$ at $u_\e$ is non-negative definite; i.e., 
\beq\label{eq:stable}
\int_{\Om_\e}|\nabla\vphi|^2\, dxdy+\int_{\Om_\e}W''(u_\e)\vphi^2dxdy\geq 0 \qquad\text{for all } \vphi\in H^1(\Om_\e)\,.
\eeq
\end{definition}
We are now in a position to state the following result.
\begin{theorem}[Classification of stable critical points]\label{th:classification}
In addition to the standing hypotheses,  assume that $\Om^l$ and $\Om^r$ are smooth convex open sets,   that (W3) of Remark~\ref{rm:w3} holds, and that $W'(t)=0$ implies $W''(t)\neq0$. 
Then, there exists $\e_2>0$ such that for all $0<\e\leq\e_2$ the total number of non-constant stable critical points of 
$F(\cdot, \Om_\e)$ is given by $N(N-1)$, where $N:=\mathrm{card\,}V$. These stable critical points are nearly locally constant. More precisely, 
setting
$$
\eta_2:=\min_{\alpha_1\not=\alpha_2\in V}|\alpha_1-\alpha_2|\min\{|\Om^l|, |\Om^r|\}\,,
$$
 for each pair $(\alpha,\beta)\in V\times V$, with $\alpha\not=\beta$, and for $0<\e\leq\e_2$ there exists  a unique stable critical point $u_{\e}^{\alpha,\beta}$ of $F(\cdot, \Om_\e)$ such that $\|u_\e^{\alpha,\beta}-\alpha\|_{L^1(\Om^l_\e)}<\frac{\eta_2}2$ and $ \|u_\e^{\alpha,\beta}-\beta\|_{L^1(\Om^r_\e)}<\frac{\eta_2}2$. Viceversa, if $v$ is a non-constant stable critical point of 
 $F(\cdot, \Om_\e)$, with  $0<\e\leq\e_2$, then there exists a unique pair $(\alpha,\beta)\in V\times V$, with $\alpha\not=\beta$, such that $v=u_\e^{\alpha,\beta}$.   Moreover,
  $$
  \|u_\e^{\alpha,\beta}-\alpha\|_{L^1(\Om^l_\e)}\to 0\,, \qquad\text{and}\qquad \|u_\e^{\alpha,\beta}-\beta\|_{L^1(\Om^r_\e)}\to 0\,,
$$ 
as $\e\to 0$.
\end{theorem}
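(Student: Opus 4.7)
The plan is to combine the existence (Theorem~\ref{th:eu}) and uniqueness (Corollary~\ref{cor:uniqueness}) results established earlier with the Casten--Holland rigidity in convex domains \cite{CaHo}. For each pair $\alpha\neq\beta\in V$, Theorem~\ref{th:eu} produces an admissible family $(u_\e^{\alpha,\beta})$; Theorem~\ref{th:locmin} guarantees each member is an isolated $L^1$-local minimizer and hence stable, and Definition~\ref{def:gcw}(b) supplies the required $L^1$-convergence as $\e\to 0$. The value of $\eta_2$ is arranged precisely so that the $\eta_2/2$-$L^1$-balls around distinct constants $\alpha\neq\alpha'\in V$ measured on $\Om_\e^l$ (and analogously on $\Om_\e^r$) are disjoint: if $u$ satisfied both $\|u-\alpha\|_{L^1(\Om_\e^l)}<\eta_2/2$ and $\|u-\alpha'\|_{L^1(\Om_\e^l)}<\eta_2/2$, the triangle inequality would give $|\alpha-\alpha'|\,|\Om^l|<\eta_2$, contradicting the definition of $\eta_2$.

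The substantive task is to show that, for $\e$ small enough, every non-constant stable critical point $v$ of $F(\cdot,\Om_\e)$ coincides with one of the $u_\e^{\alpha,\beta}$. I would argue by contradiction: assume there exist $\e_n\to 0$ and non-constant stable critical points $v_n$ of $F(\cdot,\Om_{\e_n})$ none of which equals any $u_{\e_n}^{\alpha,\beta}$. By (W3) and Remark~\ref{rm:w3}, $\|v_n\|_\infty\leq \overline M$; testing \eqref{ELw} against $v_n$ itself gives $\|\nabla v_n\|_{L^2(\Om_{\e_n})}\leq C$. Setting $w_n^l(x,y):=v_n(x-\e_n,y)$ on $\Om^l$ and $w_n^r(x,y):=v_n(x+\e_n,y)$ on $\Om^r$, both sequences are bounded in $H^1$; up to a subsequence, $w_n^l\wto w_0^l$ and $w_n^r\wto w_0^r$ weakly in $H^1$ and strongly in $L^2$, with $w_0^l,w_0^r\in L^\infty$ bounded by $\overline M$.

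The crux would be to identify $w_0^l$ as a stable critical point of $F(\cdot,\Om^l)$. For any $\vphi\in C^{\infty}(\overline{\Om^l})$ vanishing in a neighborhood of the origin, the translate $\vphi(\cdot-(\e_n,0))$, extended by zero across the neck and the right bulk, is an admissible test function for both \eqref{ELw} and the stability inequality \eqref{eq:stable} on $\Om_{\e_n}$; the strong $L^2$-convergence of $w_n^l$, together with dominated convergence for the $W'$ and $W''$ terms (uniformly bounded thanks to the $L^\infty$ bound), would yield both the weak Euler--Lagrange equation and the non-negativity of the second variation against such $\vphi$. A standard logarithmic cutoff argument in $\R^2$, relying on the zero Sobolev $2$-capacity of a point, shows this class of test functions is dense in $H^1(\Om^l)$, so the limit relations extend to all of $H^1(\Om^l)$ and $w_0^l$ is a stable critical point of $F(\cdot,\Om^l)$. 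Since $\Om^l$ is smooth and convex, \cite{CaHo} forces $w_0^l\equiv\alpha$ with $W'(\alpha)=0$ and $W''(\alpha)\geq 0$; the non-degeneracy hypothesis $W'(t)=0\Rightarrow W''(t)\neq 0$ upgrades this to $\alpha\in V$. Symmetrically $w_0^r\equiv\beta\in V$, and $(v_n)$ becomes an admissible family of nearly locally constant critical points in the sense of Definition~\ref{def:gcw} with constants $\alpha,\beta$.

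If $\alpha=\beta$, Corollary~\ref{cor:ab} would force $v_n\equiv\alpha$ for $n$ large, contradicting non-constancy; otherwise, Corollary~\ref{cor:uniqueness} applied to $(\alpha,\beta)$ would give $v_n=u_{\e_n}^{\alpha,\beta}$ for $n$ large, contradicting the standing assumption. The disjointness of the $\eta_2/2$-balls from the first paragraph ensures that the pair $(\alpha,\beta)$ is uniquely determined by $v$, and the $L^1$-convergence of each $(u_\e^{\alpha,\beta})$ ensures the $\eta_2/2$-bound is eventually met, so for $\e\leq\e_2$ the non-constant stable critical points are exactly the $N(N-1)$ functions $u_\e^{\alpha,\beta}$. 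The main obstacle I anticipate is the passage to the limit across the collapsing neck—for both the Euler--Lagrange equation and the stability inequality—which the logarithmic capacity argument in $\R^2$ should handle cleanly.
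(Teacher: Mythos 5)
Your proposal is correct and follows essentially the same route as the paper: reduce to showing that any non-constant stable critical point is nearly locally constant, pass to the limit in the Euler--Lagrange equation and in the stability inequality using the vanishing $2$-capacity of the shrinking neck, invoke Casten--Holland on each convex bulk to force constant limits in $V$, and then conclude via Corollary~\ref{cor:ab} and the uniqueness result. The only cosmetic difference is that the paper inserts the capacity cutoff into the test functions before passing to the limit, whereas you work with test functions vanishing near the origin and recover the general case by density afterwards; both hinge on the same capacity fact.
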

\begin{proof}
In view of Theorem~\ref{th:eu} and Corollary~\ref{cor:uniqueness}, the statement is an easy consequence of the following claim: {\it For all $\e>0$ sufficiently small let $u_\e$ be a non-constant stable critical point of $F(\cdot, \Om_\e)$. Then, then there exist $\alpha$, $\beta\in V$, with $\alpha\not=\beta$, such that, up to a subsequence, 
$$
  \|u_\e-\alpha\|_{L^1(\Om^l_\e)}\to 0\,, \qquad\text{and}\qquad \|u_\e-\beta\|_{L^1(\Om^r_\e)}\to 0\,.
$$  }
To this aim, we start by observing that, thanks to Remark~\ref{rm:w3}, the family $(u_\e)$ is uniformly bounded in $L^\infty$. Using \eqref{ELw} with $\vphi=u_\e$, we also have that the $H^1$-norms are uniformly bounded. Thus, we may find $u_0\in H^1(\Om^l\cup\Om^r)$ and a subsequence (not relabeled) such that 
\beq\label{uzero}
u_\e(\cdot+\e, \cdot)_{|\Om^r}\wto {u_0}_{|\Om^r}\quad\text{weakly in }H^1(\Om^r)\,, \qquad
u_\e(\cdot-\e, \cdot)_{|\Om^l}\wto {u_0}_{|\Om^l}\quad\text{weakly in }H^1(\Om^l)\,.
\eeq
Since the diameter of  $N_\e$ vanishes as $\e\to 0$, the 2-capacity of $N_\e$ vanishes as well. Therefore, it is possible to construct a family $(w_\e)$, with the following properties:
\begin{itemize}
\item[(a)] $w_\e\in H^1(\Om_\e)$ and $0\leq w_\e\leq 1$;
\item[(b)] $w_\e=0$ in $\Om_\e\setminus\Om_\e^r$;
\item[(c)]  $w_\e(x+\e,y)\to 1$ for a.e. $(x,y)\in \Om^r$; \vspace{2pt}
\item[(d)]  $\displaystyle\int_{\Om_\e}|\nabla w_\e|^2\, dxdy\to 0$ as $\e\to 0$.
\end{itemize}
Now we fix $\psi\in C^{\infty}(\overline{\Om}^r)$ and set
 $\vphi_\e:=w_\e \psi(\cdot-\e,\cdot)\in H^1(\Om_\e)$. By the criticality and the stability assumption,  recalling \eqref{eq:stable}, we have
\begin{align*} 
-& \int_{\Om_\e}\nabla u_\e\nabla\vphi_\e\, dxdy=\int_{\Om_\e}W'(u_\e)\vphi_\e\,dxdy\,,\\
& \int_{\Om_\e}|\nabla\vphi_\e|^2\, dxdy+\int_{\Om_\e}W''(u_\e)\vphi_\e^2dxdy\geq 0\,.
 \end{align*}
 Using \eqref{uzero}, the definition of $\vphi_\e$, and the properties of $w_\e$, one can check that in the limit as $\e\to 0$  the above expressions  become
 \begin{align*} 
-& \int_{\Om^r}\nabla u_0\nabla\psi\, dxdy=\int_{\Om^r}W'(u_0)\psi\,dxdy\,,\\
& \int_{\Om^r}|\nabla\psi|^2\, dxdy+\int_{\Om^r}W''(u_0)\psi^2dxdy\geq 0\,.
 \end{align*}
 Since $\psi$ is an arbitrary $C^\infty$ function on $\overline{\Om}^r$, by density we deduce that ${u_0}_{|\Om^r}$ is a stable critical point for $F(\cdot, \Om^r)$. In turn, by  \cite[Theorem 2]{CaHo}, the smoothness and the convexity of  $\Om^r$ imply that $u_0$ is a stable constant function; i.e., there  exists $\beta\in V$ such that ${u_0}_{|\Om^r}\equiv \beta$. The same argument shows that ${u_0}_{|\Om^l}\equiv \alpha$ for some $\alpha\in V$. Since
 all the $u_\e$ are non-constant, we must also have $\alpha\not=\beta$ thanks to Corollary~\ref{cor:ab}. This concludes the proof of the claim and the theorem follows.
\end{proof}
\section{Asymptotic behavior}

The goal of this section is to study the asymptotic behavior of admissible families $(u_\e)$ of nearly locally constant 
critical points as $\e\to 0$. As explained in the introduction, such a behavior is strongly influenced by the geometry of the neck $N_\e$ and, more specifically, by the asymptotic value of the ratio $\de\over\e$ between  width and length of $N_\e$. Before entering the details of the asymptotic analysis, we state and prove two technical lemmas that will be useful in the following. 
 
 \begin{lemma}
  Let $(u_\e)$ be a family of critical points as in Definition~\ref{def:gcw}. Then
    \beq\label{eub}
  F(u_\e, \Om_\e) - W(\alpha)|\Omega^l| - W(\beta)|\Omega^r| \leq \frac{C}{|\ln\delta|}\
  \eeq
 for some constant $C>0$ independent of $\e$. 

  \end{lemma}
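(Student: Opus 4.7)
The plan is to exhibit a competitor $\xi_\e$ with $F(\xi_\e,\Om_\e)\leq W(\alpha)|\Om^l|+W(\beta)|\Om^r|+C/|\ln\delta|$ and then invoke Theorem~\ref{th:locmin} to transfer the bound to $u_\e$. The natural candidate is precisely the logarithmic interpolation $\xi_\e$ already used in the existence proof (Theorem~\ref{th:eu}): equal to $\alpha$ on $\Om_\e^l$ outside $B_{\de^\gamma}(0,0)$, equal to $\beta$ on $\Om_\e^r$ outside $B_{\de^\gamma}(0,0)$, equal to $(\alpha+\beta)/2$ on the neck $N_\e$, and given by the harmonic logarithmic profile $h_\e$ in the annular regions of width $M\de\leq |(x\pm\e,y)|\leq\de^\gamma$, for some fixed $\gamma\in(0,1)$.

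First I would compute the Dirichlet energy of $\xi_\e$. Away from the annuli $\xi_\e$ is constant, so only the two annular contributions matter. Since $h_\e$ is the explicit logarithm given in the existence proof, a direct polar-coordinate calculation yields
\[
\frac12\int_{B_{\de^\gamma}\setminus \overline{B_{M\de}}}|\nabla h_\e|^2\,dxdy=\frac{\pi(\beta-\alpha)^2}{4(\ln\de^{\gamma-1}-\ln M)}=\frac{C_\gamma}{|\ln\de|}+o(1/|\ln\de|).
\]
For the potential part, on $\Om_\e^l\setminus B_{\de^\gamma}$ and $\Om_\e^r\setminus B_{\de^\gamma}$ the integrand equals $W(\alpha)$ and $W(\beta)$, contributing $W(\alpha)|\Om^l|+W(\beta)|\Om^r|$ up to an $O(\de^{2\gamma})$ error coming from the removed balls. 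On the annuli, $\xi_\e$ is bounded between $\alpha$ and $\beta$, so $W(\xi_\e)$ is bounded and the contribution is $O(\de^{2\gamma})$. Finally, on $N_\e$ the contribution is $O(\e\de)$. Summing,
\[
F(\xi_\e,\Om_\e)\leq W(\alpha)|\Om^l|+W(\beta)|\Om^r|+\frac{C}{|\ln\de|},
\]
since for $\de$ small the $1/|\ln\de|$ term dominates the polynomial errors $\de^{2\gamma}$ and $\e\de$.

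To convert this into a bound on $F(u_\e,\Om_\e)$, I would apply Theorem~\ref{th:locmin}: there exist $\e_0,\eta_0>0$ such that $F(u_\e,\Om_\e)\leq F(v,\Om_\e)$ for every $v\in H^1(\Om_\e)$ with $\|v-u_\e\|_{L^1(\Om_\e)}\leq\eta_0$. Taking $v=\xi_\e$, the key check is that $\xi_\e$ is admissible. From the explicit form of $\xi_\e$ one has $\|\xi_\e-u_{0,\e}\|_{L^1(\Om_\e)}\to 0$ (this is proved verbatim in Theorem~\ref{th:eu}), while Definition~\ref{def:gcw}(b) gives $\|u_\e-u_{0,\e}\|_{L^1(\Om_\e)}\to 0$ (up to an $O(|N_\e|)$ term on the neck, which is negligible by the uniform $L^\infty$ bound in Definition~\ref{def:gcw}(a)). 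Hence $\|\xi_\e-u_\e\|_{L^1(\Om_\e)}\to 0$, and for all sufficiently small $\e$ this norm is below $\eta_0$, so Theorem~\ref{th:locmin} yields $F(u_\e,\Om_\e)\leq F(\xi_\e,\Om_\e)$ and combining with the previous estimate gives \eqref{eub}.

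The only delicate point is the Dirichlet estimate for $h_\e$: one must choose the inner radius $M\de$ so that the disks $B_{M\de}(0,0)$ contain the full cross-section of the neck $N_\e$ near its endpoints (so that $\xi_\e\in H^1$ across the interface between $N_\e$ and the annular region), and the outer radius $\de^\gamma$ so that $B_{\de^\gamma}(\pm\e,0)\subset\Om_\e^{l,r}$; hypothesis (O3) ensures the boundary is flat and vertical in these balls, so the restriction of $h_\e$ to the bulks fits inside $\Om_\e^{l,r}$ without boundary-fitting issues. Everything else is a routine calculation relying only on the explicit logarithmic profile and the already-established local minimality of $u_\e$.
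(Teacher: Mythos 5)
Your proof is correct and follows exactly the paper's route: the paper also takes the logarithmic competitor $\xi_\e$ from the proof of Theorem~\ref{th:eu}, invokes the $L^1$-local minimality of $u_\e$ from Theorem~\ref{th:locmin} to get $F(u_\e,\Om_\e)\leq F(\xi_\e,\Om_\e)$, and concludes from the explicit computation $|\ln\de|\bigl(F(\xi_\e,\Om_\e)-W(\alpha)|\Om^l|-W(\beta)|\Om^r|\bigr)\to \frac{(\beta-\alpha)^2}{4}\frac{\pi}{1-\gamma}$. Your write-up simply fills in the details the paper leaves to "an explicit calculation," including the correct matching conditions at the radii $M\de$ and $\de^\gamma$.
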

  \begin{proof}
  Let  $\xi_{\e}$ be the test function constructed in the proof of Theorem~\ref{th:eu}. Since $\|u_\e -\xi_\e \|_{L^1(\Omega_\e)}\to 0$, by Theorem~\ref{th:locmin} we have that   $F (u_\e, \Omega_\e) \leq F(\xi_\e, \Omega_\e)$, provided that  $\e$ is sufficiently small. An explicit calculations shows that 
  \beq\label{recovery}
  \lim_{\e\to 0}|\ln\de|( F(\xi_\e, \Om_\e) - W(\alpha)|\Omega^l| - W(\beta)|\Omega^r|)=\frac{(\beta-\alpha)^2}{4}\frac{\pi}{1-\gamma}
  \eeq
  and the conclusion follows.
  \end{proof}
   \begin{lemma}[Barriers]\label{lm:barriers}
   For $0<\rho_0<\rho_1$  let $A^r(\rho_0, \rho_1):=\{(x,y):\rho_0<|(x,y)|<\rho_1,\, x>0\}$. Let 
   $u\in H^1(A^r(\rho_0, \rho_1))\cap L^{\infty}(A^r(\rho_0, \rho_1))$ satisfy
   $$
\begin{cases}
\Delta u=W'(u) & \text{in $A^r(\rho_0, \rho_1)$,}\\
\displaystyle \frac{\pa u}{\pa\nu}=0 & \text{on $\pa A^r(\rho_0, \rho_1)\cap\{x=0\}$,}\\
a_-\leq u\leq a_+ & \text{on $\pa B_{\rho_0}(0,0)\cap \{x>0\}$},\\
b_-\leq u\leq b_+ & \text{on $\pa B_{\rho_1}(0,0)\cap \{x>0\}$}
\end{cases} 
   $$
for some constants $a_\pm$ and  $b_\pm$.
 Let $d$ be any constant such that $d\geq\max_{|t|\leq\|u\|_{\infty}}|W'(t)|$. Then 
$$
u^-(x,y)\leq u(x,y)\leq 
  u^+(x,y)
$$
for all $(x,y)\in A^r(\rho_0, \rho_1)$, where
$$
u^\pm(x,y):= \frac{\mp d|(x,y)|^2}{4}+
\frac{(b_\pm-a_\pm)\pm\frac{d}4(\rho_1^2-\rho_0^2)}{\ln\frac{\rho_1}{\rho_0}}\ln\frac{|(x,y)|}{\rho_0}+
a_{\pm}\pm\frac d4\rho_0^2\,.
$$
  \end{lemma}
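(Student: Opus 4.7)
The plan is to treat the inequalities $u\leq u^+$ and $u^-\leq u$ separately, each by a straightforward weak maximum principle applied to the difference. Both $u^+$ and $u^-$ are radial; writing $r=|(x,y)|$ and recalling $\Delta=\partial_r^2+r^{-1}\partial_r$ for radial functions, a direct computation shows
\[
\Delta u^+ = -d,\qquad \Delta u^- = d,
\]
because the logarithmic term is harmonic and $\Delta(\pm r^2/4)=\pm 1$. One also checks, by plugging in $r=\rho_0$ and $r=\rho_1$ (the constants in the definitions of $u^\pm$ were precisely tuned for this), that $u^+(\rho_0)=a_+$, $u^+(\rho_1)=b_+$, $u^-(\rho_0)=a_-$, $u^-(\rho_1)=b_-$.

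Set $w:=u-u^+$. Using the PDE $\Delta u=W'(u)$ and the bound $|W'(u(x,y))|\leq \max_{|t|\leq\|u\|_\infty}|W'(t)|\leq d$, we get
\[
\Delta w \;=\; W'(u)+d \;\geq\; 0,
\]
so $w$ is subharmonic in $A^r(\rho_0,\rho_1)$. On the outer arc $\partial B_{\rho_1}\cap\{x>0\}$ we have $w\leq b_+-b_+=0$, and on the inner arc $\partial B_{\rho_0}\cap\{x>0\}$ we have $w\leq a_+-a_+=0$. On the flat portion $\partial A^r\cap\{x=0\}$, the function $u$ satisfies $\partial_\nu u=0$ by hypothesis, while $u^+$ is radial and therefore also satisfies $\partial_\nu u^+=0$ there (its gradient is parallel to the radial direction, which is orthogonal to $\nu=(\pm1,0)$).

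To avoid any delicacy with the Neumann portion, I would reflect across $\{x=0\}$: the even extension $\widetilde u$ of $u$ lies in $H^1(\{\rho_0<|(x,y)|<\rho_1\})$ and still satisfies $\Delta\widetilde u=W'(\widetilde u)$ in the distributional sense, while $u^+$ is already even. Thus $\widetilde w$ is subharmonic on the full annulus and $\widetilde w\leq 0$ on $\partial(B_{\rho_1}\setminus\overline{B_{\rho_0}})$. The weak maximum principle (for $H^1$ subsolutions of $\Delta\cdot\geq0$) gives $\widetilde w\leq 0$ throughout, whence $u\leq u^+$ on $A^r(\rho_0,\rho_1)$. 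The inequality $u^-\leq u$ is proved by the same argument applied to $w^-:=u^--u$: one has $\Delta w^- = d-W'(u)\geq 0$, $w^-\leq 0$ on both arcs by the boundary computations above, and the Neumann/reflection step is identical.

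The only mildly technical point is the reflection step (and checking that the even extension solves the equation across the flat boundary in the weak sense, which is standard for Neumann data), but once that is in place the result is an entirely routine comparison; there is no real obstacle beyond the algebraic verification that the constants defining $u^\pm$ produce the asserted boundary values.
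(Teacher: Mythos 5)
Your proposal is correct and follows essentially the same route as the paper: the paper likewise observes that $u^\pm$ solves $\Delta u^\pm=\mp d$ with the Neumann condition on $\{x=0\}$ and boundary values $a_\pm$, $b_\pm$ on the two arcs, and then invokes the comparison principle (citing \cite[Proposition 6.1]{MorSla}). Your only addition is to spell out that comparison step via even reflection across $\{x=0\}$ and the weak maximum principle for $H^1$ subsolutions, which is a standard and valid justification of the cited principle.
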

  \begin{proof}
  The conclusion follows by observing that  
  $$
\begin{cases}
\Delta u^\pm= \mp d& \text{in $A^r(\rho_0, \rho_1)$,}\\
\displaystyle \frac{\pa u^\pm}{\pa\nu}=0 & \text{on $\pa A^r(\rho_0, \rho_1)\cap\{x=0\}$,}\\
u^\pm=a_\pm  & \text{on $\pa B_{\rho_0}(0,0)\cap \{x>0\}$},\\
u^\pm=b_\pm & \text{on $\pa B_{\rho_1}(0,0)\cap \{x>0\}$\,}
\end{cases} 
   $$
   and by applying the comparison principle (see, for instance, \cite[Proposition 6.1]{MorSla}).
  \end{proof}

We are now in position to perform the asymptotic analysis in the various regimes.
 \subsection{The normal neck regime}
 In this subsection we consider the normal neck regime; i.e., we assume that 
 \beq\label{eq:normal}
 \lim_{\e\to0}\frac{\de}{\e}=\ell\in (0,\infty)\,.
 \eeq
We  denote by $\Om_\infty$ the ``limit'' of the rescaled sets 
$\frac1\e\Om_\e$. More precisely, $\Omega_\infty$ consists of the union of two half planes (the limits of the rescaled bulk domains) and the rescaled neck  
$$
\Om_\infty:=
 \Om_\infty^l\cup N_\infty\cup  \Om_\infty^r\,,
$$
where $\Om_\infty^l:=\{(x,y):\, x<-1\}$,  $\Om_\infty^r:=\{(x,y):\,x>1\}$,  and 
$N_\infty:=\{(x,y):\, |x|\leq1,\, -\ell f_2(x)<y<\ell f_1(x)\}$ (see Figure~\ref{fig:limitnormal} below).
 \begin{figure}[htbp]
  \begin{center}
  \includegraphics[scale=0.5]{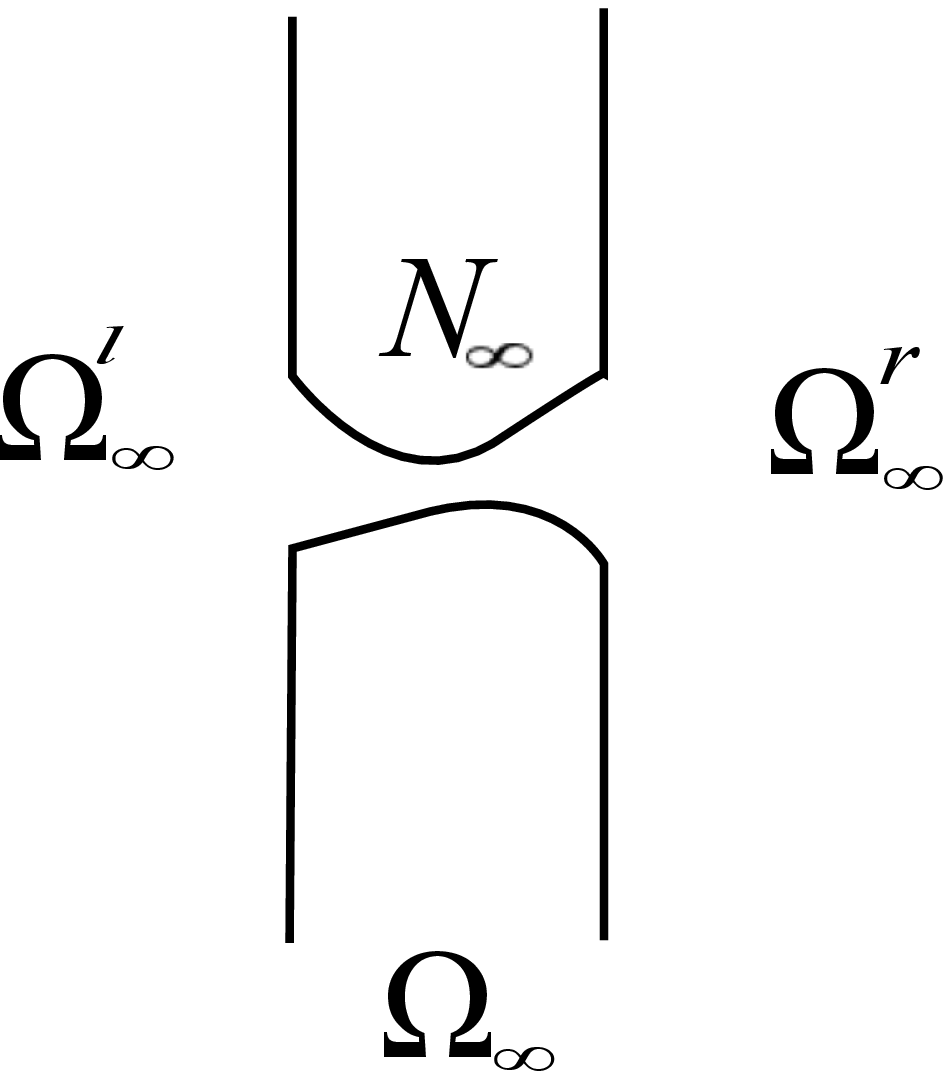}
  \caption{The limiting set $\Om_\infty$.}
  \label{fig:limitnormal}
  \end{center}
  \end{figure}
  We are now in a position to state the main result of this subsection.
 \begin{theorem}[Asymptotic behavior in the normal neck regime]\label{th:normal}
 Assume \eqref{eq:normal} and let $(u_\e)$ be a family of critical points as in Definition~\ref{def:gcw}.  Set 
 \beq\label{val}
 v_\e (x,y):= |\ln \e| (u_\e (\e x, \e y) - u_\e (0,0)).
 \eeq
 Then,  for every $p\geq 1$ we have $v_{\e}\to v$  in $W^{2,p}_{loc} (\Om_\infty)$ \footnote{Note that the local convergence of $v_\e$ to $v$ is well defined. Indeed since $\R^2\setminus \frac{1}{\e}\Om_\e\to \R^2\setminus \Om_\infty$ in the Kuratowski sense, it follows that for every $\Omega'\subset\subset\Om_\infty$ we have $\Omega'\subset\subset \frac{1}{\e}\Om_\e$ for $\e$ sufficiently small.}as $\e\to 0^+$, where $v$ is the unique solution to the 
 following problem:
 \beq\label{norm-limpob}
 \begin{cases}
 \Delta v=0 & \text{in $\Om_\infty$,}\vspace{4pt}\\
 \partial_\nu v=0 & \text{on $\partial  \Om_\infty$,}\vspace{4pt}\\
 \displaystyle\frac{v(x,y)}{\ln|(x,y)|}\to {\beta - \alpha \over 2} & \text{as $ |(x,y)|\to +\infty$ with $x>1$,}\vspace{4pt}\\
 \displaystyle\frac{v(x,y)}{\ln|(x,y)|}\to {\alpha - \beta \over 2} & \text{as $|(x,y)|\to +\infty$ with $x<1$,}\vspace{4pt}\\
v(0,0)=0\,.
 \end{cases}
 \eeq
Moreover, $u_\e (0, 0) \to {\alpha + \beta \over 2}$ and $\nabla v_\e \chi_{\frac1\e \Om_\e} \to \nabla v \chi_{\Om_\infty} $ in $L^2_{loc} (\R^2;\R^2)$. Finally,
\beq\label{normalen}
\lim_{\e\to 0^+}|\ln\e|\left( F(u_\e, \Om_\e) - W(\beta) |\Om^r| - W(\alpha) |\Om^l| \right)={\pi \over 4} (\beta -\alpha)^2 \,.
\eeq
\end{theorem}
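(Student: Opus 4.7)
Plan of proof. The plan is to proceed in three stages: uniform $W^{2,p}_{\loc}$ control of the rescaled family $v_\e$, identification and uniqueness of the limit in \eqref{norm-limpob}, and the sharp energy expansion. The key conceptual point is that the logarithmic rescaling $|\ln\e|$ must be absorbed by two-sided barriers that are uniform in $\e$.

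\emph{Compactness.} The rescaled equation reads $\Delta v_\e = \e^2|\ln\e|\, W'(u_\e(\e\,\cdot))$ on $\frac1\e\Om_\e$ with zero Neumann data; by Definition~\ref{def:gcw}(a), the right-hand side tends to zero in $L^\infty$. To bound $v_\e$ in $L^\infty_{\loc}(\Om_\infty)$ I would apply Lemma~\ref{lm:barriers} on annuli $A^r(\rho_0,\rho_1)\subset \Om_\e^r$ with $\rho_0\sim\e$ and $\rho_1$ fixed small: the outer data are pinned by $u_\e\to\beta$ uniformly on compact subsets of $\Om^r$ (the $L^1$ convergence of Definition~\ref{def:gcw}(b) lifts to $L^\infty_{\loc}$ via elliptic regularity and the uniform $L^\infty$ bound $\overline M$), while the inner data are controlled by $\|u_\e\|_\infty\le\overline M$. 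The logarithmic profile of the barriers then gives $\bigl|v_\e(X,Y)\bigr|\le C\bigl(1+\bigl|\ln|(X,Y)|\bigr|\bigr)$ on compact subsets of $\Om_\infty^r$, and a symmetric bound on $\Om_\infty^l$. Combined with the $C^{1,\gamma}$ smoothness of $\pa\Om_\infty$ inherited from $\pa\Om^r$, $\pa\Om^l$, and $N$, standard interior and boundary $W^{2,p}$ estimates yield $W^{2,p}_{\loc}$ compactness.

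\emph{Identification and uniqueness.} Any subsequential limit $v$ is harmonic on $\Om_\infty$ with zero Neumann data, and $v(0,0)=0$ is immediate. The far-field conditions in \eqref{norm-limpob} come from a second application of Lemma~\ref{lm:barriers} on annuli $A^r(R,R/\e)$ with $R$ fixed large and outer data pinned by $u_\e\to\beta$: this yields a limit of the form $v_\e(X,Y)/\ln|(X,Y)|\to c$ in appropriate shells. A matching argument through the neck, using that $v$ is globally defined and bounded as an intermediate profile on $\Om_\infty$, forces $u_\e(0,0)\to(\alpha+\beta)/2$ and identifies the asymptotic coefficients as $\pm(\beta-\alpha)/2$. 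Uniqueness of \eqref{norm-limpob} is a Liouville-type result: the difference of two solutions is a bounded harmonic function on $\Om_\infty$ with zero Neumann data vanishing at the origin, hence identically zero after reflecting across the flat vertical parts of $\pa\Om_\infty$ and using the decay at infinity. Uniqueness promotes subsequential convergence to convergence of the full family, and the $L^2_{\loc}$ gradient convergence follows at once from the strong $W^{2,p}_{\loc}$ convergence.

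\emph{Energy expansion.} For the upper bound, the recovery sequence $\xi_\e$ of Theorem~\ref{th:eu} together with \eqref{recovery}, the local minimality of $u_\e$ from Theorem~\ref{th:locmin}, and the relation $|\ln\de|/|\ln\e|\to 1$ from \eqref{eq:normal}, yields $\limsup|\ln\e|\bigl(F(u_\e,\Om_\e)-W(\alpha)|\Om^l|-W(\beta)|\Om^r|\bigr)\le \pi(\beta-\alpha)^2/(4(1-\gamma))$; sending $\gamma\to 0$ (or, better, replacing the explicit logarithmic cutoff by the optimal extension matching $v$) gives the sharp bound $\pi(\beta-\alpha)^2/4$. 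For the matching lower bound, the bulk potential excesses $W(u_\e)-W(\alpha)$ and $W(u_\e)-W(\beta)$ are non-negative near the wells and contribute $o(|\ln\e|^{-1})$ by pointwise convergence and dominated convergence, so one reduces to the Dirichlet integral; rescaling gives $\frac12\int|\nabla u_\e|^2=\frac{1}{2|\ln\e|^2}\int_{\frac1\e\Om_\e}|\nabla v_\e|^2$, and Fatou combined with the far-field $v\sim\pm\frac{\beta-\alpha}{2}\ln r$ and the identity $\int_{A^r(R_0,R_1)}|\nabla\ln r|^2=\pi\ln(R_1/R_0)$ produces exactly $\pi(\beta-\alpha)^2/4$. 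The main obstacle is the matching $u_\e(0,0)\to(\alpha+\beta)/2$: it is not encoded in any local information and must be extracted by coupling the two-sided barrier estimates through the neck, so that the uniqueness step at infinity is precisely what pins down this value and the resulting geometric constant $\pi/4$ in the energy.
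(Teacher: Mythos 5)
Your outline misses the central technical ingredient of the paper's proof: the \emph{localization estimate} for the Dirichlet energy near the neck. The barrier argument you propose for the $L^\infty_{\loc}$ bound on $v_\e$ cannot work as stated: on the annulus $\{M\e\le|(x-\e,y)|\le\rho_1\}$ you take inner data controlled only by $\|u_\e\|_\infty\le\overline M$, and Lemma~\ref{lm:barriers} evaluated at a point with $|(x,y)|\sim\e R$ ($R$ fixed) then yields only $|u_\e(\e X,\e Y)-u_\e(0,0)|\le 2\overline M+o(1)$, hence $|v_\e|\lesssim \overline M|\ln\e|\to\infty$ --- not the claimed $C\bigl(1+\bigl|\ln|(X,Y)|\bigr|\bigr)$. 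To run the barriers at the $|\ln\e|$-amplified scale one must first know that the oscillation of $u_\e$ on $B_{2M\e}(0,0)\cap\Om_\e$ is $O(1/|\ln\e|)$, equivalently that $c_\e:=\int_{B_{2M\e}\cap\Om_\e}|\nabla u_\e|^2$ satisfies $C_1|\ln\e|^{-2}\le c_\e\le C_2|\ln\e|^{-2}$. The paper proves this by a blow-up contradiction: normalize by $\sqrt{c_\e}$, pass to a bounded harmonic limit on $\Om_\infty$ with Neumann data, and kill it by Liouville after conformally mapping $\Om_\infty$ onto a strip and reflecting; the opposite inequality uses the barriers to show that $c_\e|\ln\e|^2\to0$ would make the normalized profile blow up. Nothing in your proposal substitutes for this step, and without it neither the $W^{2,p}_{\loc}$ compactness of $v_\e$ nor the far-field coefficients can be extracted. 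One also needs the companion estimate $|\ln\e|\,|\bar u_\e-u_\e(0,0)|\le C$ (Poincar\'e--Wirtinger plus elliptic estimates) to pass from the mean value $\bar u_\e$, which is what the blow-up controls, to the pointwise normalization $u_\e(0,0)$ in \eqref{val}.

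Two further gaps. First, $u_\e(0,0)\to(\alpha+\beta)/2$ is not a consequence of the uniqueness of \eqref{norm-limpob}: that problem concerns the already-normalized $v$ and is blind to the value $m:=\lim u_\e(0,0)$. In the paper $m$ is pinned down \emph{before} any compactness for $v_\e$, by matching the energy upper bound $\tfrac\pi4(\beta-\alpha)^2$ (from the recovery sequence) against the capacity lower bound $\tfrac\pi2\bigl[(\beta-m)^2+(\alpha-m)^2\bigr]$, obtained by explicitly minimizing the Dirichlet integral on the annuli with data $m+O(\eta)$ on $\pa B_{M\e}$ and $\beta+o(1)$ (resp.\ $\alpha+o(1)$) on $\pa B_{\rho_1}$; the quadratic identity forces $m=(\alpha+\beta)/2$ and simultaneously yields \eqref{normalen}. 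Second, your lower bound for the Dirichlet term via ``Fatou'' on the rescaled integral fails quantitatively: lower semicontinuity on compact subsets of the blown-up domain captures only an $O(1)$ portion of $\int|\nabla v_\e|^2$, which after division by $|\ln\e|$ contributes nothing in the limit; the logarithmically divergent bulk of the energy lives on the degenerating annulus $\{M\e<r<\rho_1\}$ and must be captured by the explicit capacity computation just described. Likewise, your estimate of the potential term needs the barriers (not merely $L^1$ or pointwise convergence) to guarantee $u_\e\in(\beta-\tau,\beta+\tau)$ outside a set of measure $O(\e^{2\gamma})$, so that the sign condition $W\ge W(\beta)$ can be applied; $L^1$-closeness alone does not give a rate $o(1/|\ln\e|)$.
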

\begin{remark}\label{rm:normal}
{\em The theorem shows that the rescaled profiles of nearly locally constant critical points  (and their energy) display a \emph{universal asymptotic behavior}, which depends only on the wells $\alpha$ and $\beta$, and on the limiting shape of the rescaled
necks. In particular, such a behavior is independent of  $\Om^l$, $\Om^r$, and the specific form
 of the double-well potential $W$. }
\end{remark}
\begin{proof}[Proof of Theorem~\ref{th:normal}]
To simplify the presentation and avoid inessential technicalities throughout the proof we assume $\ell=1$ and  $\delta = \e$. We also assume without loss of generality that $\alpha<\beta$.
Integrating by parts, we have
\beq \label{zerodir}
\int_{\Om_\e} |\nabla u_\e|^2 = \int_{\Om_\e} W'(u_\e) u_\e \to 0,
\eeq
where we have used the fact that $\chi_{\Om_\e}W'(u_\e) \to 0$ in $L^p$ for all $p\geq 1$, which easily follows from conditions (a)  and (b) of Definition~\ref{def:gcw}. In particular,
\beq\label{h1unif}
\sup_{0<\e\leq\bar\e}\|u_\e\|_{H^1(\Om_\e)}<+\infty\,.
\eeq
For any fixed $0<\rho_1<r_0$, let
\begin{align}\label{aer}
A^r_{\e}(\rho_1)&:=(\e,0)+\{(x,y)\in \Om^r:\rho_1<|(x,y)|\}\,,\\ 
A^l_{\e}(\rho_1)&:=-(\e,0)+\{(x,y)\in \Om^l:\rho_1<|(x,y)|\}\,.\nonumber
\end{align}
Recalling \eqref{h1unif} and the regularity assumptions on 
$\Om^l$ and $\Om^r$, by standard elliptic estimates we  have 
\beq\label{etae}
\eta^r_\e:=\|u_\e-\beta\|_{L^{\infty}(A^r_{\e}(\rho_1))}\to 0\,, \qquad 
\eta^l_\e:=\|u_\e-\alpha\|_{L^{\infty}(A^l_{\e}( \rho_1))}\to 0\,.
\eeq
We now split the remaining part of the proof into several steps.

\noindent{\bf Step 1.}({\it limit of $u_\e(0,0)$ and of the energy}) 
Set 
\beq\label{EMME}
M:=\max\{\|f_1\|_{\infty}, \|f_2\|_{\infty}\}+1\,
\eeq
 where $f_1$ and $f_2$ are the functions appearing in \eqref{neck}. 
Since the function $\hat u_\e(x,y):=u_\e (\e x, \e y)$  satisfies the Euler-Lagrange equation
$$
\begin{cases}
\Delta \hat u_\e  = \e^2 W' (\hat u_\e ), \\
\displaystyle {\partial \hat u_\e\over \partial n}   =0,
\end{cases}
$$
and   $\int_{B_{2M} (0,0)\cap\Om_{\infty}} | \nabla \hat u_\e |^2 \, dxdy\,  \to 0$ by  \eqref{zerodir}, again  standard regularity results imply the existence of a constant $m$ such that 
\beq\label{toemme0}
 \hat u_\e \to m\qquad\text{locally uniformly on $ B_{2M} (0,0) \cap \Om_\infty$} 
\eeq
and
\beq\label{toemme}
 \hat u_\e  \to m\qquad\text{ uniformly on $\partial B_{M} (1,0) \cap \Om_\infty$.} 
\eeq
Here we have also used the fact that $(\Om_\e/\e)\cap B_{2M}(0,0)=\Om_\infty\cap B_{2M}(0,0)$ for $\e$ small enough (see Assumption (O3) in Section~\ref{sec:formulation}). 
We claim that 
\beq\label{emme}
m=\frac{\alpha+\beta}{2}\,.
\eeq
To this aim, recall that for any given $\gamma\in (0,1)$, it is possible to construct  a sequence of functions $\xi_\e$ such that 
$\|\xi_\e-u_{\e}\|_{L^1(\Om_\e)}\to 0$ and
$$
\lim_{\e\to 0^+}|\ln\e|\left( F(\xi_\e, \Om_\e) - W(\beta) |\Om^r| - W(\alpha) |\Om^l| \right)=\frac{(\beta -\alpha)^2}4\frac{\pi} {1-\gamma} \,,
$$
see \eqref{recovery}. By \eqref{eq:locmin} and the arbitrariness of $\gamma$ we deduce that
\beq\label{limsup}
\limsup_{\e\to 0^+}|\ln\e|\left( F(u_\e, \Om_\e) - W(\beta) |\Om^r| - W(\alpha) |\Om^l| \right)\leq{\pi \over {4}} (\beta -\alpha)^2\,.
\eeq
Recall now that due to \eqref{toemme} for any given $\eta>0$ and $\e$ sufficiently small  we have
\beq\label{toemme2}
m-\eta\leq u_\e\leq m+\eta \qquad\text{on $\{(x,y):\, |(x-\e,y)|=M\e\,, x>\e\}$}.
\eeq
Moreover,
by \eqref{etae},
$$
\beta-\eta_\e^r\leq u_\e\leq \beta+\eta_\e^r \qquad\text{on $\{(x,y):\, |(x-\e,y)|=\rho_1\,, x>\e\}$.}
$$
Assume now that $m<\beta$  so that for $\eta$ and $\e$ sufficiently small we also have 
$m+\eta<\beta-\eta_\e^r$. Then,  we may estimate
 \begin{align}
 & \int_{\{M\e<|(x-\e,y)|<\rho_1,\, x>\e\}}|\nabla u_\e|^2 dxdy\nonumber \\
  &\geq \min\left\{\int_{\{M\e<|(x-\e,y)|<\rho_1,\, x>\e\}}\!\!\!|\nabla u|^2 dxdy:\, u\leq m+\eta\text{ on } 
 \pa B_{M\e}(\e, 0)\cap\{x>\e\}\,, \right. \nonumber\\
  &\left. \vphantom{\int}\qquad\qquad\qquad\qquad\quad u\geq \beta-\eta^r_\e\text{ on }  \pa B_{\rho_1}(\e, 0)\cap\{x>\e\}    \right\}\nonumber\\
  &= \min\left\{\int_{\{M\e<|(x-\e,y)|<\rho_1,\, x>\e\}}\!\!\!|\nabla u|^2 dxdy:\, u= m+\eta\text{ on } 
 \pa B_{M\e}(\e, 0)\cap\{x>\e\}\,, \right. \nonumber\\
  &\left. \vphantom{\int}\qquad\qquad\qquad\qquad\quad u= \beta-\eta^r_\e\text{ on }
   \pa B_{\rho_1}(\e, 0)\cap\{x>\e\}   \right\}\,,
   \label{limenbulk}
 \end{align}
where the last equality easily follows by a standard truncation argument, recalling that 
$m+\eta<\beta-\eta_\e^r$. The unique minimizer of the last minimization problem is given by
$$
 \tilde u_\e(x,y)=m+\eta +\frac{\beta-\eta^r_\e-m-\eta}{\ln\frac{\rho_1}{M\e}}\ln\frac{|(x-\e,y)|}{M\e}\,.
$$
The explicit computation of its Dirichlet energy, \eqref{limenbulk}, and the arbitrariness of $\eta$ yield
$$
\liminf_{\e\to 0}\frac{|\ln \e|}2\int_{\{M\e<|(x-\e,y)|<\rho_1,\, x>\e\}}|\nabla u_\e|^2 dxdy\geq 
\frac{\pi(\beta-m)^2}{2}\,.
$$
The same inequality is trivial when $m=\beta$ and can be proven similarly when $m>\beta$, using the fact that for $\eta$ and $\e$
sufficiently small $m-\eta>\beta+\eta_\e^r$. By an analogous argument we also have
 $$
\liminf_{\e\to 0}\frac{|\ln \e|}2\int_{\{M\e<|(x+\e,y)|<\rho_1,\, x<-\e\}}|\nabla u_\e|^2 dxdy\geq 
\frac{\pi(\alpha-m)^2}{2}\,.
$$
Collecting the two inequalities, we get
\beq\label{liminfdir}
\liminf_{\e\to 0}\frac{|\ln \e|}2\int_{\Om_\e} |\nabla u_\e|^2 dxdy\geq
\frac{\pi(\beta-m)^2}{2}+\frac{\pi(\alpha-m)^2}{2}\,.
\eeq
We now claim that 
\beq\label{nonnegative}
\liminf_{\e\to 0^+}|\ln\e|\left( \int_{\Om_\e}W(u_\e)\, dxdy - W(\beta) |\Om^r| - W(\alpha) |\Om^l| \right)\geq 0\,.
\eeq
To this aim, choose $\tau>0$  so small that that
\beq\label{tau}
W(t)\geq W(\alpha) \quad\text{for all $t\in (\alpha-\tau, \alpha+\tau)$,}\qquad
W(t)\geq W(\beta) \quad\text{for all $t\in (\beta-\tau, \beta+\tau)$.}
\eeq
This is possible thanks to condition (b) of Definition~\ref{def:gcw}. 

Recalling \eqref{toemme2} (with $\eta=1$) and \eqref{etae}, we can apply Lemma~\ref{lm:barriers} with
$\rho_0:=M\e$, $a_-:=m-1$, $b_-:=\beta-\eta_\e^r$, and 
$$
d:=\max_{|t|\leq \sup_{\e}\|u_\e\|_{\infty}}|W'(t)|
$$
to deduce that 
\beq\label{ue-}
u_\e(x,y)\geq u^-_\e(x,y):=u^-(x-\e,y)
\qquad\text{for $(x,y)\in \{M\e\leq |(x-\e,y)|\leq \rho_1,\, x>\e\}$}\,,
\eeq
where
$$
u^{-}(x,y):=\frac{d|(x,y)|^2}{4}
+\frac{\left(\beta-\eta_\e^r-m+1-\frac{d}{4}(\rho_1^2-M^2\e^2)\right)}{\ln\frac{\rho_1}{M\e}}\ln\frac{|(x,y)|}{ M\e}
+m-1-\frac d4M^2\e^2\,.
$$
Fix $\gamma\in (0,1)$ and note that 
$$
u_\e^-\geq
\begin{cases}
\displaystyle(\beta-\eta_\e^r-m+1)\frac{\ln\frac{\e^\gamma}{ M\e}}{\ln\frac{\rho_1}{M\e}}-\frac{d}2\rho_1^2+m-1
 & \text{if $\beta-\eta_\e^r-m+1\geq 0$}\vspace{5pt}\\
\displaystyle\vphantom{\int}\beta-\eta_\e^r-\frac{d}2\rho_1^2 & 
\text{otherwise}
\end{cases}
$$
on the set $\{\e^\gamma\leq |(x-\e,y)|\leq\rho_1\}$, 
provided that $\e$ is sufficiently small. By taking $\gamma$, $\rho_1$, and $\e$  small enough and recalling \eqref{etae} and \eqref{ue-}, we may conclude that
$$
u_\e\geq u_\e^- \geq \beta-\tau \qquad\text{on $\{\e^\gamma\leq |(x-\e,y)|\leq\rho_1\}$.}
$$
Using now the upper bound $u_\e^+:=u^+(\cdot-\e,\cdot)$ provided by Lemma~\ref{lm:barriers} with $a_+:=m+1$, 
$b_+:=\beta+\eta_\e^r$ and $\rho_0=M\e$, and $d$ as before (and taking $\gamma$ and $\rho_1$ smaller, if needed), we  can prove similarly that 
$$
u_\e\leq u_\e^+ \leq \beta+\tau \qquad\text{on $\{\e^\gamma\leq |(x-\e,y)|\leq\rho_1\}$.}
$$
Taking into account also \eqref{etae}, we therefore conclude that for $\e$ small enough
\beq\label{tau2}
\beta-\tau\leq u_\e\leq \beta+\tau  \qquad\text{on $A_\e^r(\e^\gamma)$,}
\eeq
where $A_\e^r(\e^\gamma)$ is the set defined in \eqref{aer} (with $\rho_1$ replaced by $\e^\gamma$).
Clearly,  the same argument  shows also that (upon possible modification of $\gamma$ and $\rho_1$, if necessary)
\beq\label{tau3}
\alpha-\tau\leq u_\e\leq \alpha+\tau  \qquad\text{on $A_\e^l(\e^\gamma)$}
\eeq
for all $\e$ sufficiently small. Combining \eqref{tau}, \eqref{tau2}, and \eqref{tau3}, we obtain
\begin{align*}
\int_{\Om_\e}W(u_\e)\, dxdy & =\int_{A_\e^l(\e^\gamma)}W(u_\e)\, dxdy+\int_{A_\e^r(\e^\gamma)}W(u_\e)\, dxdy+
\int_{\Om_\e\setminus\left(A_\e^l(\e^\gamma)\cup A_\e^r(\e^\gamma)\right)}W(u_\e)\, dxdy\\
& \geq W(\alpha)|\Om^l| +W(\beta)|\Om^r|-C \e^{2\gamma}\,,
\end{align*}
for some constant $C>0$ independent of $\e$. Note that we have also used the fact that the measure of 
$\Om_\e\setminus\left(A_\e^l(\e^\gamma)\cup A_\e^r(\e^\gamma)\right)$ is of order $\e^{2\gamma}$ together with the uniform $L^{\infty}$
bound on $W(u_\e)$.
From the above inequality we easily infer \eqref{nonnegative}.

Combining \eqref{limsup}, \eqref{liminfdir}, and \eqref{nonnegative} we obtain
\begin{align*}
\liminf_{\e\to 0^+}|\ln\e|&\left( F(u_\e, \Om_\e) -  W(\beta) |\Om^r| - W(\alpha) |\Om^l| \right)  \geq 
\frac{\pi(\beta-m)^2}{2}+\frac{\pi(\alpha-m)^2}{2}\\
& \geq {\pi \over {4}} (\beta -\alpha)^2 \geq
\limsup_{\e\to 0^+}|\ln\e|\left( F(u_\e, \Om_\e) - W(\beta) |\Om^r| - W(\alpha) |\Om^l| \right)\,.
\end{align*}
Hence, in particular,
$$
\frac{\pi(\beta-m)^2}{2}+\frac{\pi(\alpha-m)^2}{2}= {\pi \over {4}} (\beta -\alpha)^2 \,,
$$
which implies \eqref{emme} and \eqref{normalen}.

\noindent{\bf Step 2.} ({\it localization estimate for the energy}) 
 Let
\beq \label{ce0}
c_\e := \int_{\Om_\e \cap B_{2M\e} (0,0)} |\nabla u_\e|^2 \, dxdy.
\eeq
We claim that there exist positive constants $C_1$ and $C_2$ independent of $\e$ such that 
\beq \label{ce}
{C_1 \over |\ln \e|^2}\leq c_\e \leq { C_2 \over |\ln \e|^2}. 
\eeq
We argue by contradiction assuming that, up to a subsequence, either 
\beq\label{cenot}
c_\e |\ln \e|^2 \to \infty \quad \text{ as $\e \to 0$}
\eeq 
or
\beq\label{cenot2}
c_\e |\ln \e|^2 \to 0\quad \text{ as $\e \to 0$.}
\eeq
We can define for $(x,y) \in \Om_\e / \e$
$$
w_\e (x,y) = { 1 \over \sqrt{c_\e}} \left( u_\e (\e x, \e y) - \bar u_\e \right),
$$
where $\bar u_\e: = \medintinrigo_{B_{M\e} (0,0) \cap \Omega_\e} u_\e\, dxdy\,$. 
Notice that for small $\e$ we have
\beq\label{grad=1}
\int_{\Om_\infty \cap B_{2M} (0,0)} |\nabla w_\e|^2 \, dxdy=1\,.
\eeq
Here we used also that fact that $\Om_\infty \cap B_{2M} (0,0)=\frac{1}{\e}\Om_\e \cap B_{2M} (0,0)$ for $\e$ small enough. By compactness and standard elliptic estimates, we may thus assume that, up to subsequences,
\beq\label{insideball}
w_\e \to w_0 \quad \text{in }W^{2,p}_{loc}({\Om}_\infty \cap B_{2M}(0,0))\quad\text{and}\quad  \sup_{\e}\|w_\e\|_{L^{\infty}({\Om}_\infty \cap B_{r}(0,0))}<+\infty \text{ for all $0<r<2M$.}
\eeq
Moreover the convergence is uniform away from the corner points of $\Om_\infty \cap B_{M}(1,0)$, so that in particular we have $w_\e \to w_0$ uniformly on  $\pa B_{M}(1,0)\cap \{x>1\}$.
Set $m_0:=\min_{\pa B_{M}(1,0)\cap \{x>1\}}w_0-1$ and $M_0:=\max_{\pa B_{M}(1,0)\cap \{x>1\}}w_0+1$. Thus,
for $\e$ small enough we have 
$$
m_0\leq w_\e\leq M_0 \qquad \text{on }\pa B_{M}(1,0)\cap \{x>1\}
$$
or, equivalently,
$$
m_0\sqrt{c_\e}+\bar u_\e\leq u_\e\leq M_0\sqrt{c_\e}+\bar u_\e  \qquad \text{on }\pa B_{M\e}(\e,0)\cap \{x>\e\}\,.
$$
We can now apply Lemma~\ref{lm:barriers} with
$\rho_0:=M\e$, $a_-:=m_0\sqrt{c_\e}+\bar u_\e$, $a_+:=M_0\sqrt{c_\e}+\bar u_\e$, $b_-:=\beta-\eta_\e^r$, $b_+:=\beta+\eta_\e^r$ and 
\beq\label{d}
d:=\max_{|t|\leq \sup_{\e}\|u_\e\|_{\infty}}|W'(t)|
\eeq
to deduce that 
\begin{multline}\label{ue-bis}
u^-_\e(x,y):=u^-(x-\e,y)\leq u_\e(x,y)\leq u^+_\e(x,y):=u^+(x-\e,y)
\\\text{for $(x,y)\in \{M\e\leq |(x-\e,y)|\leq \rho_1,\, x>\e\}$}\,,
\end{multline}
where
\begin{multline*}
u^{-}(x,y):=\frac{d|(x,y)|^2}{4}
+\frac{\left(\beta-\eta_\e^r-m_0\sqrt{c_\e}-\bar u_\e-\frac{d}{4}(\rho_1^2-M^2\e^2)\right)}{\ln\frac{\rho_1}{M\e}}
\ln\frac{|(x,y)|}{M\e}
\\+m_0\sqrt{c_\e}+\bar u_\e-\frac d4M^2\e^2
\end{multline*}
and 
\begin{multline*}
u^{+}(x,y):=-\frac{d|(x,y)|^2}{4}
+\frac{\left(\beta+\eta_\e^r-M_0\sqrt{c_\e}-\bar u_\e+\frac{d}{4}(\rho_1^2-M^2\e^2)\right)}{\ln\frac{\rho_1}{ M\e}}
\ln\frac{|(x,y)|}{M\e}
\\+M_0\sqrt{c_\e}+\bar u_\e+\frac d4M^2\e^2\,.
\end{multline*}
Assume now that \eqref{cenot} holds. Then, it is straightforward to check that 
$$
\frac{u^-_\e(\e\cdot,\e\cdot)-\bar u_\e}{\sqrt{c_\e}}\to m_0\,, \quad \frac{u^+_\e(\e\cdot,\e\cdot)-\bar u_\e}{\sqrt{c_\e}}\to M_0\quad\text{locally uniformly in 
$\{x>1\}\setminus B_M(1,0)$.}
$$
Since
$$
\frac{u^-_\e(\e\cdot,\e\cdot)-\bar u_\e}{\sqrt{c_\e}}\leq w_\e\leq \frac{u^+_\e(\e\cdot,\e\cdot)-\bar u_\e}{\sqrt{c_\e}}
$$
and recalling \eqref{insideball}, we deduce that $w_\e$ are locally uniformly bounded in 
$\Om_\infty\cap\{x>0\}$. A completely analogous argument shows that the same locally uniform bounds hold in $\Om_\infty\cap\{x<0\}$. Therefore, by standard arguments (see for instance \cite[Proposition 6.2]{MorSla}) we can conclude that, up to a subsequence,  $w_\e\to w_0$ in 
$W^{2,p}_{loc}(\Om_\infty)$ for all $p>2$, where $w_0$ is a bounded harmonic function in $\Om_\infty$ satisfying homogeneous Neumann boundary conditions on $\pa \Om_\infty$. Using  the Riemann mapping theorem we can find a conformal mapping $\Psi$ from
the
infinite strip $\mathcal{R}:=(-1, 1)\times\R$ onto  $\Om_\infty$. Thus, $w_0\circ\Psi$ is bounded and 
harmonic in $\mathcal{R}$ and satisfies a homogeneous Neumann condition on $\partial\mathcal{R}$.
By reflecting  $w_0\circ\Psi$ infinitely many times, we obtain a bounded  entire harmonic function, which then must be constant by Liouville theorem. 
Since we also have $\nabla w_\e \chi_{\frac{\Om_\e}{\e}}\to \nabla w_0\chi_{\Om_\infty}$ in 
$L^2_{loc}(\R^2; \R^2)$ (see again \cite[Proposition 6.2]{MorSla}), it follows, in particular, 
$$
\int_{\Om_\infty \cap B_{2M} (0,0)} |\nabla w_\e|^2 \, dxdy=\int_{\frac{\Om_\e}{\e}\cap B_{2M} (0,0)} |\nabla w_\e|^2 \, dxdy\to 
\int_{\Om_\infty \cap B_{2M} (0,0)} |\nabla w_0|^2 \, dxdy=0\,,
$$
a contradiction to \eqref{grad=1}. 

We now assume that \eqref{cenot2} holds. Using also the fact that 
\beq\label{media}
\bar u_\e\to \frac{\alpha+\beta}2\,,
\eeq
which follows from \eqref{toemme0} and \eqref{emme}, one can check in this case that 
$$
w_\e\geq \frac{u^-_\e(\e x,\e y)-\bar u_\e}{\sqrt{c_\e}}\to +\infty
$$
for all $(x,y)\in \{x>1\}\setminus B_M(1,0)$. This, in turn, gives a contradiction to \eqref{insideball} and concludes the proof of \eqref{ce}.

\noindent{\bf Step 3.} ({\it conclusion}) 
Set  now
$$
\widetilde w_\e(x,y):=|\ln \e|(u_\e(\e x,\e y)-\bar u_\e)\,.
$$
Using \eqref{ce}, it follows that
$$
\int_{\Om_\infty \cap B_{2M} (0,0)} |\nabla \widetilde w_\e|^2 \, dxdy\leq C
$$
for some constant $C$ independent of $\e$. Thus, 
arguing exactly as before,  we may deduce the existence of $\widetilde w_0$ such that,  up to subsequences,
\beq\label{insideball2}
\widetilde w_\e \to \widetilde w_0 \quad \text{in }W^{2,p}_{loc}({\Om}_\infty \cap B_{2M}(0,0)) \text{ and }  \sup_{\e}\|\widetilde w_\e\|_{L^{\infty}({\Om}_\infty \cap B_{r}(0,0))}<+\infty \text{ for $0<r<2M$.}
\eeq
Moreover, again exactly as before, we may also show that 
$$
|\ln \e|(\widetilde u^-_\e(\e\cdot,\e\cdot)-\bar u_\e)\leq \widetilde w_\e\leq |\ln \e|(\widetilde u^+_\e(\e\cdot,\e\cdot)-\bar u_\e)\,,
$$
where $\widetilde u^-_\e$ and $\widetilde u^+_\e$ are defined as $u^-_\e$ and $u^+_\e$, respectively, with $c_\e$, $m_0$, and $M_0$ replaced by $\frac{1}{|\ln \e|^2}$,  $\widetilde m_0:=\min_{\pa B_{M}(1,0)\cap \{x>1\}}\widetilde w_0-1$, and $\widetilde M_0:=\max_{\pa B_{M}(1,0)\cap \{x>1\}}\widetilde w_0+1$, respectively. By a straightforward computation, taking into account \eqref{media}, we have that 
\beq\label{finalbounds}
\begin{array}{c}
|\ln \e|(\widetilde u^-_\e(\e x,\e y)-\bar u_\e)\to \displaystyle\widetilde m_0+\frac{\beta-\alpha}{2}\ln \frac{|(x-1, y)|}M\,, \vspace{5pt}\\
 |\ln \e|(\widetilde u^+_\e(\e x,\e y)-\bar u_\e)\to \displaystyle\widetilde M_0+\frac{\beta-\alpha}{2}\ln \frac{|(x-1, y)|}M
\end{array}
\eeq
for all $(x,y)\in \{x>1\}\setminus B_M(1,0)$. The convergence is in fact uniform on the bounded subsets of $\{x>1\}\setminus B_M(1,0)$. Recalling that $\widetilde w_\e$ satisfies
$$
\begin{cases}
\Delta \widetilde w_\e=|\ln \e|\e^2W'(u_\e) & \text{in $\frac{\Om_\e}{\e}$,} \\
\displaystyle\frac{\pa \widetilde w_\e}{\pa\nu}=0  & \text{on $\frac{\pa \Om_\e}{\e}$,}
\end{cases}
$$
 using \eqref{insideball2},  \eqref{finalbounds}, and the corresponding bounds in
 $\{x<-1\}\setminus B_M(-1,0)$,
 by \cite[Proposition 6.2]{MorSla} we can deduce that, up to subsequences, 
 \beq\label{almostdone0}
 \widetilde w_\e\to \widetilde w_0 \quad\text{in $W^{2,p}_{loc}(\Om_\infty)$,}
 \eeq 
 with $\widetilde w_0$ solving
\beq\label{almostdone}
 \begin{cases}
 \Delta \widetilde w_0=0 & \text{in $\Om_\infty$,}\vspace{4pt}\\
\displaystyle\frac{\pa \widetilde w_0}{\pa\nu} =0 & \text{on $\partial  \Om_\infty$,}\vspace{4pt}\\
 \displaystyle\frac{\widetilde w_0(x,y)}{\ln|(x,y)|}\to {\beta - \alpha \over 2} & \text{as $ |(x,y)|\to +\infty$ with $x>1$,}\vspace{4pt}\\
 \displaystyle\frac{\widetilde w_0(x,y)}{\ln|(x,y)|}\to {\alpha - \beta \over 2} & \text{as $|(x,y)|\to +\infty$ with $x<-1$.}
 \end{cases}
 \eeq
Next we claim that
\beq\label{claimissimo}
|\ln\e||\bar u_\e-u_\e(0,0)|\leq C
\eeq
for some constant $C$ independent of $\e$. To this aim, fix $0<\rho_0<M$ so small that
$B_{\rho_0}(0,0)\subset\subset \Om_\infty$ and define 
$a_\e:=\medintinrigo_{B_{\rho_0}(0,0)} u_\e(\e x, \e y)\, dxdy$. Notice that
\begin{align}
|\bar u_\e-a_\e|& \leq \medint_{B_{M}(0,0) \cap \Omega_\infty}|u_\e(\e x, \e y)-a_\e|\, dxdy \nonumber\\
& \leq C\|u_\e(\e\cdot, \e\cdot)-a_\e\|_{L^2(B_M(0,0) \cap \Omega_\infty)}\leq \frac{C}{|\ln\e|}\,,\label{claimissimo1}
\end{align}
where the least inequality follows from the Poincar\'e-Wirtinger inequality, \eqref{ce0}, and \eqref{ce}.  Observe now that by the Sobolev Embedding Theorem and standard elliptic estimates, we have for any $p>2$
\begin{align*}
\|\nabla u_\e(\e\cdot, \e\cdot)\|_{C^0(B_{\rho_0}(0,0))}& \leq C\|u_\e(\e\cdot, \e \cdot)-\bar u_\e\|_{W^{2,p}(B_{\rho_0}(0,0))}\nonumber \\ 
&\le C(\|\e^2W'(u_\e)\|_{L^p(B_M(0,0)  \cap \Omega_\infty)}+\|u_\e(\e\cdot, \e \cdot)-\bar u_\e\|_{H^1(B_M(0,0)  \cap \Omega_\infty)})
\leq \frac{C}{|\ln\e|}\,, 
\end{align*}
where the last inequality follows again from \eqref{ce0} and \eqref{ce}. From the above inequality, it immediately follows that 
$$
|a_\e-u_\e(0,0)|\leq \frac{C}{|\ln\e|}\,,
$$
which together with \eqref{claimissimo1} yields
\eqref{claimissimo}. 

We are now ready to conclude. Indeed, by \eqref{almostdone0} and \eqref{claimissimo}, we have that, up to a further subsequence, the functions $v_\e$ defined in \eqref{val} converge to $v$ in $W^{2,p}_{loc}(\Om_\infty)$ for every $p\geq 1$, where $v$ solves \eqref{norm-limpob}. Since the solution to this problem is unique, as shown in Step 5 of the proof  of \cite[Theorem 3.1]{MorSla}, the convergence holds for the full sequence. Finally, the fact that
$u_\e(0,0)\to \frac{\alpha+\beta}2$ follows from \eqref{media} and \eqref{claimissimo}.

\end{proof}
\subsection{The thick neck regime}
In this subsection we state the result concerning the asymptotic behavior of  admissible families of critical point in the so-called thick neck regime. We omit the proof since it is similar (and in fact easier) to that of Theorem~\ref{th:normal}. We define
$$
y_i=\min\{ \, f_i(x),\,  x\in [-1,1] \,  \}  \quad\text{for }i=1,2\,.
$$
Using assumptions on $f_i(x)$ it is clear that $y_i >0$ for $i=1,2$.
\begin{figure}[htbp]
  \begin{center}
  \includegraphics[scale=0.5]{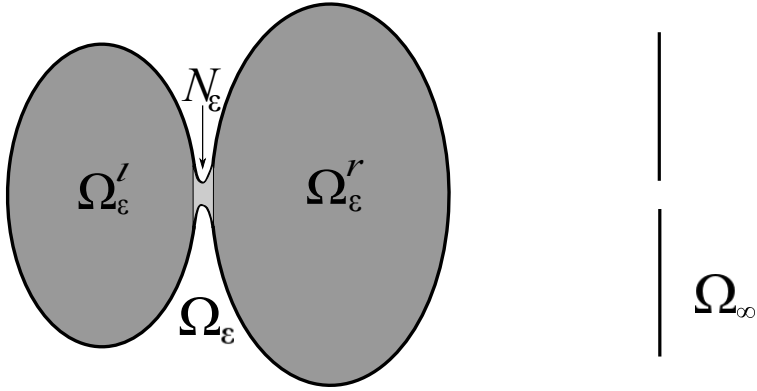}
  \caption{The limiting set $\Om^+_\infty$.}
  \label{fig:limitplus}
  \end{center}
  \end{figure}

 \begin{theorem}\label{th:thickneck}
 Assume that 
 $$
 \lim_{\e \to 0^+ }\frac{\de}\e=+\infty\,.
 $$
  Let $(u_\e)$ be a family of critical points as in Definition~\ref{def:gcw} and set
 $$
 v_\e(x,y):=
 |{\ln\de}| (u_\e(\de x,\de y)-u_\e(0,0))
 $$
 and $\Om_\infty:=\R^2\setminus\{(0,y):\,  y\geq y_1 \text{ or }y\leq -y_2\}$. 
 Then, for every $p\geq 1$  we have  $v_{\e}\to v$  in $W^{2,p}_{loc} (\Om_\infty)$, where $v$ is  the unique solution to  the  following problem:
 $$
 \begin{cases}
 \Delta v=0 & \text{in $\Om_\infty$,}\vspace{4pt}\\
 \partial_\nu v=0 & \text{on $\partial  \Om_\infty$,}\vspace{4pt}\\
 \displaystyle\frac{v(x,y)}{\ln|(x,y)|}\to \frac{\beta-\alpha}{2} & \text{as $ |(x,y)|\to +\infty$ with $x>0$,}\vspace{4pt}\\
 \displaystyle\frac{v(x,y)}{\ln|(x,y)|}\to \frac{\alpha-\beta}2 & \text{as $|(x,y)|\to +\infty$ with $x<0$,}\vspace{4pt}\\
v(0,0)=0\,.
 \end{cases}
$$
 Moreover, $u_\e(0, 0)\to \frac{\alpha+\beta}2$ and  $\nabla v_\e \chi_{\frac1\de \Om_\e} \to \nabla v \chi_{\Om_\infty} $ in $L^2_{loc} (\R^2;\R^2)$. Finally,
$$
 \lim_{\e\to 0^+} |\ln\de|(F(u_\e,\Om_\e)-W(\beta)|\Om^r|-W(\alpha)|\Om^l|)=\frac\pi4(\beta-\alpha)^2\,.
 $$
 \end{theorem}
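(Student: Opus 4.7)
I would follow, \emph{mutatis mutandis}, the proof of Theorem~\ref{th:normal}, with two systematic changes: the length scale $\e$ is replaced by $\delta$ in every blow-up, rescaling, and barrier radius of that argument, and the limiting dumbbell is replaced by the slit plane $\Om_\infty$ described in the statement. The first step is to establish $u_\e(0,0)\to (\alpha+\beta)/2$ and the energy asymptotic simultaneously. The recovery sequence $\xi_\e$ of Theorem~\ref{th:eu}, combined with Theorem~\ref{th:locmin}, gives the upper bound
\[
\limsup_{\e\to 0}|\ln\delta|\bigl(F(u_\e,\Om_\e)-W(\alpha)|\Om^l|-W(\beta)|\Om^r|\bigr)\le \frac{\pi}{4}(\beta-\alpha)^2.
\]
For the matching liminf, standard elliptic regularity gives a subsequential limit $m:=\lim u_\e(0,0)$; minimising the Dirichlet integral on the bulk annuli $\{M\delta\le |(x\mp\e,y)|\le \rho_1\}$ with the explicit logarithmic radial minimiser yields the lower bound $\pi[(\beta-m)^2+(\alpha-m)^2]$; and Lemma~\ref{lm:barriers} with radius $\delta^\gamma$ confines $u_\e$ near the respective wells on those annuli, forcing the potential contribution to be asymptotically nonnegative. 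Matching the two bounds forces $m=(\alpha+\beta)/2$ and gives the claimed energy asymptotic.

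Next, I would prove the localisation estimate
\[
c_\e:=\int_{\Om_\e\cap B_{2M\delta}(0,0)}|\nabla u_\e|^2\;\asymp\; \frac{1}{|\ln\delta|^2}
\]
by the same double contradiction as in Step 2 of the proof of Theorem~\ref{th:normal}: the normalised blow-ups $c_\e^{-1/2}(u_\e(\delta\cdot)-\bar u_\e)$, together with Lemma~\ref{lm:barriers} taken with $\rho_0=M\delta$, would produce a bounded harmonic function on $\Om_\infty$ with Neumann data on $\pa\Om_\infty$, which by reflection across the two slits and Liouville is necessarily constant, contradicting either the $H^1$-normalisation or local boundedness. With $c_\e \asymp 1/|\ln\delta|^2$ in hand, the rescaled profile $v_\e$ has uniformly bounded Dirichlet energy on balls about the origin, Lemma~\ref{lm:barriers} supplies explicit locally uniform pointwise bounds, and \cite[Proposition~6.2]{MorSla} delivers $W^{2,p}_{loc}$-convergence $v_\e\to v$ to a harmonic function $v$ on $\Om_\infty$ with Neumann data; a Poincar\'e--Sobolev estimate then gives $|\ln\delta||\bar u_\e-u_\e(0,0)|\le C$ (so normalising by $u_\e(0,0)$ in the definition of $v_\e$ is admissible), the barriers pin down the prescribed logarithmic behaviour at infinity, and uniqueness of the limit profile (as in \cite[Theorem 3.1, Step 5]{MorSla}) upgrades the convergence to the full sequence.

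\textbf{Main obstacle.} The only genuinely new ingredient is the identification of the limiting geometry $\Om_\infty$. Under the $\delta$-rescaling with $\e/\delta\to 0$, the rescaled neck $N_\e/\delta$ collapses in the $\hat x$-direction while its height varies with $\hat x$ according to $f_i(\delta \hat x/\e)$ (with the argument sweeping $[-1,1]$), and the rescaled bulk walls at $\hat x=\pm\e/\delta$ both crash onto the $\hat y$-axis. The ``always open'' portion of the limiting $\hat y$-axis is therefore the strip $(-y_2, y_1)$, i.e.\ the intersection of all neck cross-sections, while heights outside this strip are blocked by some wall before profiles can traverse the neck, producing the two half-line obstacles appearing in $\Om_\infty$. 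Verifying rigorously that the Kuratowski limit of $\Om_\e/\delta$ is $\Om_\infty$, and that the barriers and blow-up arguments of Theorem~\ref{th:normal} can be built over this limiting geometry (so that the reflection step in the Liouville argument still works, since the slit plane $\Om_\infty$ is conformally a strip), is the main technical step. Once this is accomplished, the remaining passages are either identical to or simpler than in the normal regime, because the collapsed neck contributes nothing to the limit problem.
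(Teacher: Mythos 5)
The paper does not actually write out a proof of this theorem: it explicitly omits it, stating that the argument is the same as (and in fact easier than) that of Theorem~\ref{th:normal}, with the blow-up scale $\e$ replaced by $\de$. Your plan is precisely that adaptation — including the correct identification of the doubly-slit plane $\Om_\infty$ as the Kuratowski limit of $\tfrac1\de\Om_\e$ (with the gap $(-y_2,y_1)$ given by the smallest neck cross-section) and the observation that this domain is still simply connected and conformally a strip, so the reflection--Liouville step in the localization estimate survives — so it matches the paper's intended route.
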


\subsection{The thin neck regime}
We now consider the critical  thin  neck regime.
To simplify the presentation, we additionally assume that $f_1$ and $f_2$ are constant in a neighborhood of the points $-1$ and $1$. Precisely, there exists $\eta_0>0$ such that 
\beq\label{simple}
f'_i(x)=0 \quad \text{ for }x\in (1-\eta_0, 1)\cup (-1, -1+\eta_0)\,, \quad i=1,2\,.
\eeq
As it will be clear from the proof of the main result, the above assumption allows to avoid some technicalities in the construction of suitable lower and upper bounds and to present the main ideas in   a more transparent way. It could be removed by using the lower and upper bounds constructed in \cite{MorSla}, see Remark~\ref{rm:without} below.
 
 In order to state the next result, we set
\beq\label{mf1f2}
m_{_{f_1f_2}}:=\int_{-1}^1\frac1{f_1+f_2}\, dx\,.
\eeq

 \begin{figure}[htbp]
  \begin{center}
  \includegraphics[scale=0.4]{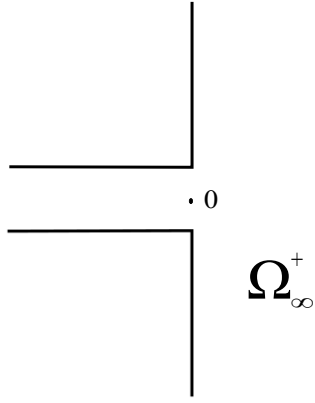}
  \caption{The limiting set $\Om^+_\infty$.}
  \label{fig:limitplus}
  \end{center}
  \end{figure}

\begin{theorem}[Critical thin neck]\label{th:thin-critical}
Assume that
\beq\label{eq:thincrit}
\lim_{\e\to 0^+}\frac{\de|\ln\de|}{\e}=\ell\in(0,+\infty)\,.
\eeq
Let $\{u_\e\}$ be the family of critical points as in Definition~\ref{def:gcw}. Then the following statements hold true.
\begin{enumerate}
\item[(i)] Let $\{v_\e\}$ be the family of rescaled profiles defined by
\beq\label{ve}
v_\e(x,y):=u_\e(\e x, \de y)\,.
\eeq
Then $v_\e\to v$  in $H^1(N)$, where $v(x,y):=\hat v(x)$ with $\hat v$ being the unique solution to 
the one-dimensional problem
\begin{multline}\label{1D-varbis}
\qquad\qquad\min\left\{\int_{-1}^1\frac{f_1+f_2}2(\theta^\prime)^2\, dx:\, \theta\in H^1(-1,1),\,\right. \\
\left.
 \theta(\pm1)=
\frac{\alpha+\beta}{2}\pm \frac{\pi\, m_{_{f_1f_2}}(\beta-\alpha)}{2(\pi\, m_{_{f_1f_2}}+ 2\ell)}\right\}\,,
\end{multline}
where $ m_{_{f_1f_2}}$ is the constant defined in \eqref{mf1f2}.
Moreover, 
\beq\label{energy2}
\lim_{\e\to 0^+}|\ln\de|F(u_\e, N_\e)=
 \frac{\ell\pi^2\,m_{_{f_1f_2}}(\beta-\alpha)^2}{ 2\left(\pi\,m_{_{f_1f_2}}+ 2\ell\right)^2}\,.
\eeq
\item[(ii)] Define
\beq\label{we+-}
w_\e^\pm(x,y):=|\ln\de|(u_\e(\de x\pm\e,\de y)-u_\e(\pm \e,0))\quad\text{for }(x,y)\in \widetilde \Om_\e^\pm:=\tfrac{\Om^\pm_\e+(\mp\e,0)}{\de}\,.
\eeq
Then,  
\beq\label{limce}
u_\e(\pm\e,0)\to \frac{\alpha+\beta}{2}\pm \frac{\pi\, m_{_{f_1f_2}}(\beta-\alpha)}{2(\pi\, m_{_{f_1f_2}}+ 2\ell)}\quad\text{as $\e\to 0^+$}
\eeq
 and the functions
$w_\e^\pm$ converge in $W^{2,p}_{loc}(\Om^\pm_\infty)$ for every $p\geq 1$ to the unique solution $w^\pm$ of the problem
\beq\label{we+asym}
\begin{cases}
\Delta w^\pm=0 & \text{in $\Om^\pm_\infty$,}\vspace{4pt}\\
\partial_{\nu} w^\pm=0 & \text{on $\partial\Om^\pm_\infty$, }\vspace{4pt}\\
 \displaystyle\frac{w^\pm(x,y)}{\ln|(x,y)|}\to \pm\frac{(\beta-\alpha)\ell}{\pi\,m_{_{f_1f_2}}+2\ell} & \text{as $ |(x,y)|\to \pm\infty$ with $\pm x>0$,}\vspace{4pt}\\
  \displaystyle\frac{w^\pm(x,y)}{x}\to \frac{1}{(f_1+f_2)(\pm1)}\frac{(\beta-\alpha)\ell\pi}{\pi\,m_{_{f_1f_2}}+2\ell}  & \text{uniformly in $y$ as $ x\to \mp\infty$,}\vspace{4pt}\\
  w^\pm(0,0)=0\,,
\end{cases}
\eeq
where (see Figure~\ref{fig:limitplus})
\beq\label{ominfty+}
\Om_\infty^\pm:=\left\{(x,y):\, \pm x\leq 0\,, -f_2(\pm1)<y< f_1(\pm1)\right\}\cup\{(x,y):\, \pm x>0\}\,.
\eeq
Moreover, $\nabla w_\e^\pm\chi_{\widetilde \Om_\e^\pm}\to \nabla w^\pm\chi_{\Om^\pm_\infty}$ in $L^2_{loc}(\R^2;\R^2)$. 

\item[(iii)] We have
\beq\label{energy3}
\lim_{\e\to 0^+}|\ln\de|\left( F(u_\e, \Om_\e\setminus N_\e)- W(\beta) |\Om^r| - W(\alpha) |\Om^l| \right)= \frac{(\beta-\alpha)^2\ell^2\pi}{\left(\pi\,m_{_{f_1f_2}}+2\ell\right)^2}\,.
\eeq
\end{enumerate}
\end{theorem}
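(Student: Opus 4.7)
The proof mirrors the architecture of the proof of Theorem~\ref{th:normal}, but now with two distinct rescalings reflecting the anisotropy of the neck: the anisotropic neck rescaling $v_\e(x,y):=u_\e(\e x,\de y)$ on the fixed reference domain $N$, and the isotropic bulk rescalings $w_\e^\pm$ defined in \eqref{we+-}. The plan consists of four steps: (a) a sharp energy upper bound of order $1/|\ln\de|$ produced by a tailored test function; (b) extraction of the one-dimensional limit $\hat v$ in the neck; (c) extraction of the two-dimensional harmonic limits $w^\pm$ in the bulks; (d) a matching/flux-conservation argument pinning down the three free parameters $\hat v(\pm 1)$ and the flux constant $C_0$, which simultaneously yields \eqref{limce}, the asymptotics in \eqref{we+asym} and the sharp energy identities \eqref{energy2}--\eqref{energy3}.

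\textbf{Energy upper bound and neck limit.} For (a), I would build a test function $\xi_\e$ by pasting together the minimizer $\theta^\ast$ of \eqref{1D-varbis} inside the neck and two annular harmonic logarithmic profiles in the bulks matching $\theta^\ast(\pm 1)$ to $\alpha,\beta$. A direct computation gives $|\ln\de|(F(\xi_\e,\Om_\e)-W(\alpha)|\Om^l|-W(\beta)|\Om^r|)\to E^\ast$, where $E^\ast$ is the sum of the right-hand sides of \eqref{energy2} and \eqref{energy3}; Theorem~\ref{th:locmin} then promotes this to an upper bound for $F(u_\e,\Om_\e)$. For (b), the change of variables turns $F(u_\e,N_\e)\le C/|\ln\de|$ into
\[
\int_N(\pa_xv_\e)^2\,dxdy\le\frac{C\e}{\de|\ln\de|}\le C',\qquad \int_N(\pa_yv_\e)^2\,dxdy\le\frac{C\de}{\e|\ln\de|}\to 0.
\]
Hence $v_\e\wto\hat v(x)$ weakly in $H^1(N)$. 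Multiplying the rescaled Euler--Lagrange equation by a test function $\varphi(x)\in C_c^1(-1,1)$ and integrating over the cross-section of width $f_1(x)+f_2(x)$ yields the weak form of $((f_1+f_2)\hat v')'=0$, hence $\hat v'=C_0/(f_1+f_2)$ for some constant $C_0$.

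\textbf{Bulk limits and matching.} Step (c) follows Step~2 of the proof of Theorem~\ref{th:normal}: the localization estimate $F(u_\e,B_\de(\pm\e,0))\le C/|\ln\de|$ (proved by a blow-up argument as advertised in the Introduction) combined with Lemma~\ref{lm:barriers} on annular regions $\{\de\lesssim|(x\mp\e,y)|\lesssim r_0\}\cap\Om_\e^\pm$ gives locally uniform bounds on $w_\e^\pm$ in $\Om_\infty^\pm\setminus\{(0,0)\}$, hence $W^{2,p}_{\loc}$-convergence of a subsequence to a harmonic function $w^\pm$ with homogeneous Neumann data; a Poincar\'e--Wirtinger estimate analogous to \eqref{claimissimo} ensures the normalization $w^\pm(0,0)=0$. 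Step (d) is the crucial matching: the barrier formulae of Lemma~\ref{lm:barriers} evaluated at the two ends of $\Om_\infty^\pm$, together with flux continuity across the neck entrance $\{\pm 1\}\times[-f_2(\pm1),f_1(\pm 1)]$ (where the anisotropy factor $\de|\ln\de|/\e\to\ell$ converts the 1D slope $\hat v'(\pm 1)=C_0/(f_1+f_2)(\pm 1)$ into the linear slope of $w^\pm$ in the strip) determine a linear system whose unique solution is
\[
\hat v(\pm 1)=\tfrac{\alpha+\beta}{2}\pm\tfrac{\pi\, m_{_{f_1f_2}}(\beta-\alpha)}{2(\pi\, m_{_{f_1f_2}}+2\ell)},\qquad C_0=\tfrac{\pi(\beta-\alpha)}{\pi\, m_{_{f_1f_2}}+2\ell}.
\]
Substituting back yields \eqref{limce} and the prefactors in \eqref{we+asym}, while the matching lower bound for the energy, obtained by inserting these explicit values into annular energy estimates as in Step~1 of the proof of Theorem~\ref{th:normal}, combined with the upper bound of step~(a), forces equalities throughout and yields \eqref{energy2}--\eqref{energy3}.

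\textbf{Main obstacle.} The delicate point is step~(d). In the symmetric framework of \cite{MorSla} the identity $u_\e(0,0)=0$ was enforced by symmetry, eliminating one unknown; here three transition constants have to be identified, and the only available mechanism is the simultaneous saturation of the sharp upper and lower energy bounds. Making this rigorous requires the new localization estimate to control $u_\e(\pm\e,0)$ uniformly in $\e$, together with a careful bookkeeping of the energy contributions near the neck endpoints, where the 1D and 2D regimes interface. This is precisely the place where the new ideas of the present paper, beyond those of \cite{MorSla}, really come into play.
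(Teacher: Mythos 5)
Your overall architecture (sharp test-function upper bound, anisotropic rescaling in the neck, isotropic blow-ups in the bulks, a localization estimate near the neck mouths) coincides with the paper's, and your identification of the ODE $((f_1+f_2)\hat v')'=0$ by passing to the limit in the Euler--Lagrange equation is a legitimate variant of the paper's purely variational treatment of the neck. But the crucial step (d), as you present it, has a genuine gap. You propose to determine $\hat v(\pm1)$ and $C_0$ by ``flux continuity across the neck entrance,'' matching $\ell\hat v'(\pm1)$ with the linear slope of $w^\pm$ in the strip. The linear system you write down does have the correct unique solution, but the matching is not justified by anything you have established: weak $H^1(N)$ convergence of $v_\e$ gives no control on $\hat v'(\pm1)$ or on normal fluxes at $x=\pm1$, and the blow-ups $w_\e^\pm$ live at scale $\de$ while the neck variable lives at scale $\e$, so the interface sits at $|x|\sim\e/\de\to\infty$ in the $w^\pm$-variables, exactly where you have no uniform control. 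The paper avoids this entirely: it proves the lower bound $\liminf|\ln\de|(F(u_\e,\Om_\e)-\cdots)\geq RE(\hat v(-1),\hat v(1))$, with $RE$ the strictly convex renormalized energy of Remark~\ref{rm:gamma} (the relaxed 1D neck energy plus the capacitary bulk terms \eqref{lbcrit+}--\eqref{lbcrit-}), matches it against the test-function upper bound $\min RE$, and concludes that $\hat v(\pm1)$ is the unique minimizer of $RE$. Only afterwards are the logarithmic coefficients of $w^\pm$ known, and the linear growth in \eqref{we+asym} is deduced from the logarithmic one via Proposition~\ref{prop:we+asym}: the flux relation is invoked only for the limit problem, where it is a theorem, not for $u_\e$. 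You do name energy saturation as the real mechanism in your closing paragraph, which is correct; the flux-matching computation should therefore be demoted to a consistency check, and your step (c) must actually supply the explicit bulk lower bounds, which it currently does not.

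A second, smaller but consequential slip: your localization estimate has the wrong power. What is needed (and what the paper proves by a blow-up/Liouville argument, using neck barriers built under assumption \eqref{simple}) is $\int_{B_{2M\de}(\pm\e,0)\cap\Om_\e}|\nabla u_\e|^2\le C/|\ln\de|^2$, not $C/|\ln\de|$. Since $w_\e^\pm$ carries the factor $|\ln\de|$, the weaker bound would give $\int|\nabla w_\e^\pm|^2\lesssim|\ln\de|\to\infty$ on the rescaled ball, and the compactness in your step (c) would collapse.
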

For an interpretation of the boundary data $\theta(\pm 1)$ appearing in the one-dimensional minimum problem \eqref{1D-varbis}
in terms of a suitable limiting {\em renormalized energy} see Remark~\ref{rm:gamma} below. 
\begin{remark}\label{rm:thincrit}
The boundary conditions appearing in problem \eqref{1D-varbis}  show that only a part of the transition occurs inside the neck. The one-dimensional limiting profile described by \eqref{1D-varbis} is determined only by the shape of the neck itself. Note also that in \eqref{we+asym} the geometry is ``linearized'' and  the shape of the neck ``weakly'' affects the limiting bulk behavior only through the constant $m_{_{f_1f_2}}$ appearing in  the conditions at infinity.  We finally remark that the two conditions at infinity in \eqref{we+asym} are not independent,
 as  shown by Proposition~\ref{prop:we+asym} below.
\end{remark}

Before starting the proof of the theorem we recall the following proposition proved in \cite[Proposition 4.14]{MorSla}.
\begin{proposition}\label{prop:we+asym}
Let $\alpha$, $\beta>0$ and consider the set
$$
\Om^+_\infty:=\{(x,y):\, x\leq 0,\, |y|<\tfrac\alpha2\}\cup\{(x,y):\, x>0\}\,.
$$ 
Then, the problem
\beq\label{alphabeta}
\begin{cases}
\Delta w=0 & \text{in $\Om^+_\infty$,}\vspace{4pt}\\
\pa_\nu w=0 &  \text{on $\pa\Om^+_\infty$,}\vspace{4pt}\\
 \displaystyle\frac{w(x,y)}{\ln|(x,y)|}\to \beta & \text{as $ |(x,y)|\to +\infty$ with $x>0$,}\vspace{4pt}\\
\text{$w$ grows at most linearly   in $\Om^+_\infty\cap\{x< 0\}$,}\vspace{4pt}\\
w(0,0)=0
\end{cases}
\eeq
admits a unique solution. Moreover,
\beq\label{lim-infty}
\frac{w(x,y)}{x}\to \frac{\pi\beta}{\alpha}\qquad  \text{uniformly in $y$ as $ x\to -\infty$.}
\eeq
\end{proposition}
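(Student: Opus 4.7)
The proof splits into three parts: identification of the rate \eqref{lim-infty}, uniqueness, and existence. Set $S:=\{x<0,\,|y|<\alpha/2\}$, $H:=\{x>0\}$, and $\Sigma:=\{x=0,\,|y|<\alpha/2\}$, so $\Omega_\infty^+=\overline{S}\cup H$. The key structural observation is that any harmonic function $\varphi$ on $S$ satisfying $\partial_y\varphi=0$ on $y=\pm\alpha/2$ admits the Fourier decomposition
\begin{equation*}
\varphi(x,y)=Ax+B+\sum_{n\ge 1}\bigl(A_n e^{n\pi x/\alpha}+B_n e^{-n\pi x/\alpha}\bigr)\cos\!\bigl(\tfrac{n\pi(y+\alpha/2)}{\alpha}\bigr).
\end{equation*}
At-most-linear growth as $x\to-\infty$ forces $B_n=0$ for every $n\ge 1$, so $\varphi(x,y)-Ax-B\to 0$ uniformly in $y$ with exponentially small remainder. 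Thus the behavior of any candidate solution $w$ in $S$ is captured by the two constants $A,B$, and \eqref{lim-infty} will follow once we identify $A=\pi\beta/\alpha$.

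\textbf{Identification of $A$ and uniqueness.} Integrating $\Delta w=0$ over $H\cap B_R$ and using the homogeneous Neumann data on $\{x=0,\,|y|>\alpha/2\}$ yields
\begin{equation*}
\int_\Sigma\partial_x w\,dy=\int_{\partial B_R\cap H}\partial_r w\,ds.
\end{equation*}
Writing $w=\beta\ln|z|+v$ with $v/\ln|z|\to 0$ and extending $v$ by even reflection across the Neumann rays, $v$ becomes a harmonic function in a punctured exterior with sublogarithmic growth at infinity, whose Laurent-type expansion has a vanishing $\ln$ term; hence $\int_{\partial B_R\cap H}\partial_r v\,ds\to 0$ and $\int_{\partial B_R\cap H}\partial_r w\,ds\to\pi\beta$. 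On the other hand, the higher modes of the strip expansion have zero $y$-mean, so $\int_\Sigma\partial_x w\,dy=A\alpha$; hence $A=\pi\beta/\alpha$, which combined with the exponential decay of the higher modes is exactly \eqref{lim-infty}. For uniqueness, applying the same flux identity to the difference $\psi=w_1-w_2$ (whose logarithmic coefficient is zero) forces $A_\psi=0$, so $\psi$ is bounded on $\overline S$; reflection plus Phragm\'en--Lindel\"of then gives boundedness on $H$. A Liouville-type argument---conformally mapping the simply connected $\Omega_\infty^+$ onto a strip and reflecting infinitely often across its Neumann walls, as in Step~2 of the proof of Theorem~\ref{th:normal}---shows $\psi$ is constant, and $w(0,0)=0$ kills the constant.

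\textbf{Existence and main obstacle.} Construct $w$ by subtracting an explicit smooth background $w_0$ equal to $\beta\ln|z|$ outside a large ball in $H$, to $\pi\beta x/\alpha$ for $x$ sufficiently negative in $S$, and satisfying $\partial_\nu w_0=0$ on $\partial\Omega_\infty^+$, smoothly glued across the interface. Then $\Delta w_0\in L^2(\Omega_\infty^+)$ has compact support, and the correction $w-w_0$ is obtained as the unique solution of a Poisson problem with $L^2$ right-hand side and homogeneous Neumann data in the appropriate homogeneous Sobolev space via Lax--Milgram. The main technical obstacle is the Phragm\'en--Lindel\"of step in uniqueness: one must extend $\psi$ by reflection through the two Neumann rays into a larger (slit) domain, control the growth of the extension from the sublogarithmic hypothesis inherited from $H$, and only then invoke the classical two-constants theorem. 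This is the step where the precise form of the growth hypothesis on $w$ is used in full strength, and it is also the reason why the condition at the half-plane end cannot be relaxed to mere polynomial control.
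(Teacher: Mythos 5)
Your argument is essentially correct, but note that this paper does not prove the proposition at all: it is quoted verbatim from \cite[Proposition 4.14]{MorSla}, so there is no in-paper proof to compare against, and what you supply is a self-contained replacement. Its three pillars are sound: (i) separation of variables in the Neumann strip, where at-most-linear growth kills the modes $e^{-n\pi x/\alpha}$ and reduces the behaviour in $\{x<0\}$ to $Ax+B$ plus an exponentially small remainder; (ii) the flux balance $A\alpha=\int_\Sigma\partial_x w\,dy=\lim_{R}\int_{\partial B_R\cap H}\partial_r w\,ds=\pi\beta$, which identifies $A=\pi\beta/\alpha$ and yields \eqref{lim-infty}; (iii) uniqueness by reducing to a bounded harmonic function and invoking the conformal-map-plus-reflection Liouville argument that this paper itself uses in Step 2 of the proof of Theorem~\ref{th:normal}. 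Three points deserve more care than your sketch gives them. First, the flux through $\Sigma$ should be computed as the flux through $\{x=-t\}\cap S$ for $t>0$ (the Fourier series is only guaranteed for $x<0$, and $\nabla w$ may have an integrable $r^{-1/3}$ singularity at the reentrant corners $(0,\pm\alpha/2)$, which must be acknowledged when applying the divergence theorem on $H\cap B_R$). Second, the ``Phragm\'en--Lindel\"of'' step is really just the Laurent expansion at infinity of the even reflection of $\psi$ across $\{x=0\}$, which is harmonic on the exterior of the segment $\overline\Sigma$: sublogarithmic growth kills the $\ln r$ term and all positive powers, so $\psi$ is bounded near infinity in $H$; no genuine Phragm\'en--Lindel\"of input is needed, and presenting it as such obscures the actual mechanism. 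Third, in the existence step you should state explicitly that Lax--Milgram on $\dot H^1/\R$ requires the compatibility condition $\int\Delta w_0=0$; this holds precisely because the outgoing flux $\pi\beta$ of $\beta\ln|z|$ in the half-plane matches the flux $(\pi\beta/\alpha)\cdot\alpha$ of the linear profile in the strip --- the same balance as in (ii), and the reason the slope $\pi\beta/\alpha$ is forced --- and one must then check that a finite-Dirichlet-energy harmonic corrector tends to constants at both ends, so the prescribed asymptotics survive the correction.
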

\begin{remark}
{We stress that  the previous statement  implies  that the logarithmic behavior of $w|_{\{x>0\}}$ at infinity, cobimbined with  the special one-dimensional geometry of the domain in $\{x<0\}$, uniquely  determine the linear asymptotic behavior of $w|_{\{x<0\}}$.}
\end{remark}

\begin{proof}[Proof of Theorem~\ref{th:thin-critical}]
We split the proof into several steps.

\noindent{\bf Step 1.} ({\it energy bounds in the neck}) 
First of all note that the same argument used to prove \eqref{nonnegative}, yields
\beq\label{nonnegativebis}
\begin{array}{c}
\displaystyle\liminf_{\e\to 0^+}|\ln\delta|\left( \int_{\Om^r_\e}W(u_\e)\, dxdy - W(\beta) |\Om^r|  \right)\geq 0 \\
\text{and}\\
\displaystyle   \liminf_{\e\to 0^+}|\ln\delta|\left( \int_{\Om^l_\e}W(u_\e)\, dxdy - W(\alpha) |\Om^l| \right)\geq 0\,.
\end{array}
\eeq
Considering the function $v_\e$ defined in
\eqref{ve}, and recalling \eqref{eub} and using \eqref{nonnegativebis}, it follows
\begin{align}
\int_{N_\e}|\nabla u_\e|^2\, dxdy&=
\int_{N_\e}\left[\frac{1}{\e^2}\Bigl|\partial_x v_\e\Bigl(\frac{x}{\e}, \frac{y}{\delta}\Bigr)\Bigr|^2+
\frac{1}{\delta^2}\Bigl|\partial_y v_\e\Bigl(\frac{x}{\e}, \frac{y}{\delta}\Bigr)\Bigr|^2\right]\, dxdy\nonumber\\
&= \int_{N}\left[\frac{\delta}{\e}|\partial_x v_\e(x, y)|^2+
\frac{\e}{\delta}|\partial_y v_\e(x, y)|^2\right]\, dxdy\leq \frac{C}{|\ln\de|}\,,\label{mezzocollo}
\end{align}
with $N$ defined in \eqref{unscaledN}.
Multiplying both sides of the last inequality by $\e/\delta$ and recalling \eqref{eq:thincrit}, we 
obtain
\beq\label{eq:neckbound}
\int_{N}\left[|\partial_x v_\e(x, y)|^2+
\frac{\e^2}{\delta^2}|\partial_y v_\e(x, y)|^2\right]\, dxdy \leq C
\eeq
for some constant $C>0$ independent of $\e$. Since $\e/\delta\to \infty$ as $\e\to 0$, by 
\eqref{eq:neckbound} we easily deduce that  $v_\e$ is bounded in $H^1(N)$ and, up to subsequences,  
\beq\label{subue}
v_\e\wto v\qquad\text{weakly in $H^1(N)$} 
\eeq
for some one-dimensional $v$ of the form
\beq\label{1Dv}
v(x,y)=\hat v(x)\qquad\text{with $\hat v\in H^1(-1,1)$.}
\eeq
 We will show that $\hat v$ is independent of the subsequence and solves \eqref{1D-varbis}. 

From \eqref{eq:thincrit}, \eqref{mezzocollo}, \eqref{subue}, and \eqref{1Dv} we have
\begin{align}
&\liminf_{\e\to 0}|\ln \delta| F(u_\e, N_\e)\geq\liminf_{\e\to 0}|\ln \delta| \frac12\int_{N_\e}|\nabla u_\e|^2\, dxdy\nonumber\\
&= \liminf_{\e\to 0} \frac12\int_{N}\left[\frac{\delta|\ln \delta|}{\e}|\partial_x v_\e(x, y)|^2+
\frac{\e|\ln\delta|}{\delta}|\partial_y v_\e(x, y)|^2\right]\, dxdy\nonumber\\
&\geq  \liminf_{\e\to 0} \frac\ell2\int_{N}|\nabla v_\e|^2\, dxdy\geq \frac\ell2\int_{N}|\nabla v|^2\, dxdy\nonumber\\
&=\frac\ell2\int_{-1}^1(f_1+f_2)(\hat v')^2\, dx\nonumber\\
&\geq\frac\ell2\min\left\{
\int_{-1}^1(f_1+f_2)(\theta^\prime)^2\, dx:\, \theta\in H^1(-1,1)\,, 
\theta(\pm 1)=\hat v(\pm1)\right\}\nonumber\\
&=\frac{\ell\left(\hat v(1)-\hat v(-1)\vphantom{\int}\right)^2}{2m_{_{f_1f_2}}}\,.
\label{limen-necktris}
\end{align}
The last equality follows from the explicit computation of the minimum problem.

 \noindent{\bf Step 2.} ({\it energy bounds in the bulk})   Let $\bar r>0$ satisfy $2\bar r<\min_{[-1,1]}f_i$, $i=1,2$. 
Since $\hat u_\e(x,y):=u_\e (\delta x+\e, \delta y)$  satisfies 
$$
\Delta \hat u_\e  = \delta^2 W' (\hat u_\e ), \qquad \text{in $B_{2\bar r}(0,0)$}
$$
and recalling that by  \eqref{zerodir}, $\int_{B_{2\bar r} (0,0)} | \nabla \hat u_\e |^2 \, dxdy\,  \to 0$, using standard regularity theory results we conclude that there exists a constant $m$ such that 
\beq\label{toemme0bis}
\hat u_\e \to m\qquad\text{uniformly on $B_{\bar r} (0,0)$.} 
\eeq
We claim that 
\beq\label{vhat}
 m=\hat v(1)\,.
 \eeq
 For this it is enough to observe that from \eqref{subue} and \eqref{1Dv} it easily follows that
 $ v_\e(\cdot, y)\wto \hat v$ weakly in $H^1(-1,1)$ for almost every $y\in (-2\bar r, 2\bar r)$.
 Thus, in particular, $v_\e(1, y)\to \hat v(1)$  for almost every $y\in (-\bar r, \bar r)$. Since $\hat u_\e(0, y)=v_\e(1,y)$, the claim follows from \eqref{toemme0bis}.
 We can now argue exactly as in the proof of \eqref{liminfdir} and use \eqref{nonnegativebis} to obtain that 
 \beq\label{lbcrit+}
 \liminf_{\e\to 0^+}|\ln\delta|\left( F(u_\e, \Om^r_\e) -  W(\beta) |\Om^r|  \right)  \geq 
\frac{\pi(\beta-\hat v(1))^2}{2}\,.
 \eeq 
 Analogously, one can show that 
  \beq\label{lbcrit-}
 \liminf_{\e\to 0^+}|\ln\delta|\left( F(u_\e, \Om^l_\e) -  W(\alpha) |\Om^l|  \right)  \geq 
\frac{\pi(\alpha-\hat v(-1))^2}{2}\,.
 \eeq 

\noindent{\bf Step 3.} ({\it asymptotic behavior in the neck and limit of the energy}) By \eqref{limen-necktris}, \eqref{lbcrit+}, and \eqref{lbcrit-} we have
\begin{multline}\label{liminfen-globbis}
\liminf_{\e\to 0}|\ln \delta|(F(u_\e, \Om_\e)-W(\alpha)|\Om^l|-W(\beta)|\Om^r|)\geq \\
\frac{\ell\left(\hat v(1)-\hat v(-1)\vphantom{\int}\right)^2}{2m_{_{f_1f_2}}}+
\frac{\pi(\alpha-\hat v(-1))^2}{2}+\frac{\pi(\beta-\hat v(1))^2}{2}
\geq \frac{(\beta-\alpha)^2\pi\ell}{2(m_{_{f_1f_2}}\pi+2\ell)}\,.
\end{multline}
Note that 
\begin{multline}\label{strictineqbis}
 \frac{\ell\left(\hat v(1)-\hat v(-1)\vphantom{\int}\right)^2}{2m_{_{f_1f_2}}}+
\frac{\pi(\alpha-\hat v(-1))^2}{2}+\frac{\pi(\beta-\hat v(1))^2}{2}=\frac{(\beta-\alpha)^2\pi\ell}{2(m_{_{f_1f_2}}\pi+2\ell)}\\ \iff \hat v(-1)=\frac{\alpha+\beta}{2}- \frac{\pi\, m_{_{f_1f_2}}(\beta-\alpha)}{2(\pi\, m_{_{f_1f_2}}+ 2\ell)}\, \quad \text{ and } \quad
\hat v(1)=\frac{\alpha+\beta}{2}+ \frac{\pi\, m_{_{f_1f_2}}(\beta-\alpha)}{2(\pi\, m_{_{f_1f_2}}+ 2\ell)}\,,
\end{multline}
as it easily follows by minimizing the function on the left-hand side with respect to $\hat v(-1)$ and $\hat v(1)$.
 On the other hand, for any fixed $\gamma\in (0,1)$  and for $M$ as in \eqref{EMME}, we may consider 
the test functions $z_\e$ defined as
\begin{multline*}
z_\e(x,y):=\\
\begin{cases}
\frac{1}\e\frac{\pi(\beta-\alpha)}{m_{_{f_1f_2}}\pi+2\ell}\int_{-\e}^x\frac{1}{(f_1+f_2)(\frac{s}\e)}\, ds+ \frac{\alpha+\beta}{2}- \frac{\pi\, m_{_{f_1f_2}}(\beta-\alpha)}{2(\pi\, m_{_{f_1f_2}}+ 2\ell)}& \text{in $N_\e$,}\vspace{6pt}\\
\frac{\alpha+\beta}{2}- \frac{\pi\, m_{_{f_1f_2}}(\beta-\alpha)}{2(\pi\, m_{_{f_1f_2}}+ 2\ell)} & \text{in $\{|(x+\e, y)|\leq M\delta,\, x<-\e\}$,}\vspace{6pt}\\
\frac{\alpha+\beta}{2}+ \frac{\pi\, m_{_{f_1f_2}}(\beta-\alpha)}{2(\pi\, m_{_{f_1f_2}}+ 2\ell)} & \text{in $\{|(x-\e, y)|\leq M\delta,\, x>\e\}$,}\vspace{6pt}\\
\frac{\ell(\beta-\alpha)}{m_{_{f_1f_2}}\pi+2\ell}\frac1{\left|\ln M\delta^{1-\gamma}\right|}\ln\frac{|(x+\e,y)|}{\delta^{\gamma}}+\alpha & \text{in $\{M\delta<|(x+\e,y)|<\delta^{\gamma}, \, x<-\e\}$,}\vspace{6pt}\\
\alpha & \text{otherwise in $\Om^{l}_\e$,}\vspace{6pt}\\
-\frac{\ell(\beta-\alpha)}{m_{_{f_1f_2}}\pi+2\ell}\frac1{\left|\ln M\delta^{1-\gamma}\right|}\ln\frac{|(x-\e,y)|}{\delta^{\gamma}}+\beta & \text{in $\{M\delta<|(x-\e,y)|<\delta^{\gamma}, \, x>\e\}$,}\vspace{6pt}\\
\beta & \text{otherwise in $\Om^{r}_\e$.}
\end{cases}
\end{multline*}
Taking into account the local minimality of $u_\e$, we have
\begin{align}
\limsup_{\e\to 0}|\ln \delta|&(F(u_\e, \Om_\e)-W(\alpha)|\Om^l|-W(\beta)|\Om^r|)\nonumber\\
&\leq \limsup_{\e\to 0}|\ln\delta|(F(z_\e, \Om_\e)-W(\alpha)|\Om^l|-W(\beta)|\Om^r|)
\nonumber\\
&\leq \lim_{\e\to 0}|\ln\de| \left(\frac12\int_{\Om_\e}|\nabla z_\e|^2\,dxdy+ \mathcal{L}^2
\left((N_\e\cup B_{\delta^{\gamma}}(\e,0)\cup B_{\delta^{\gamma}}(-\e,0))\right)\max_{[\alpha,\beta]}W\right)\nonumber\\
&=  \lim_{\e\to 0} |\ln\de|\frac12\int_{\Om_\e}|\nabla z_\e|^2\,dxdy= \frac{(\beta-\alpha)^2\pi\ell}{2(m_{_{f_1f_2}}\pi+2\ell)^2}\left(m_{_{f_1f_2}}\pi+\frac{2\ell}{1-\gamma}\right)\,,
\label{limsupenbis}
\end{align}
where the last equality follows by explicit computation of the Dirichlet energy of $z_\e$.

Combining \eqref{liminfen-globbis} and \eqref{limsupenbis}, since $\gamma$ can be chosen arbitrarily close to $0$,  we conclude
\begin{multline}\label{limenglobbis}
\lim_{\e\to 0}|\ln \delta|(F(u_\e, \Om_\e)-W(\alpha)|\Om^l|-W(\beta)|\Om^r|)=\\
\frac{\ell\left(\hat v(1)-\hat v(-1)\vphantom{\int}\right)^2}{2m_{_{f_1f_2}}}+
\frac{\pi(\alpha-\hat v(-1))^2}{2}+\frac{\pi(\beta-\hat v(1))^2}{2}
= \frac{(\beta-\alpha)^2\pi\ell}{2(m_{_{f_1f_2}}\pi+2\ell)}\,,
\end{multline}

which, in turn, yields 
\beq\label{hatv1bis}
\hat v(\pm 1)=\frac{\alpha+\beta}{2}\pm \frac{\pi\, m_{_{f_1f_2}}(\beta-\alpha)}{2(\pi\, m_{_{f_1f_2}}+ 2\ell)}
\eeq
thanks to \eqref{strictineqbis}.
Note that the last equality, together with \eqref{toemme0bis} and \eqref{vhat}, yields that 
$$
u_\e(\e,0)\to \frac{\alpha+\beta}{2}+ \frac{\pi\, m_{_{f_1f_2}}(\beta-\alpha)}{2(\pi\, m_{_{f_1f_2}}+ 2\ell)}\quad\text{as $\e\to 0^+$.}
$$
A completely similar argument holds for $u_\e(-\e, 0)$, thus proving  \eqref{limce}. 
Moreover, the limit in \eqref{limenglobbis} is independent of the selected subsequence and thus  the full sequence converges. Now, combining \eqref{limen-necktris}, \eqref{lbcrit+}, \eqref{lbcrit-},  \eqref{limenglobbis}, and
\eqref{hatv1bis}  one deduces that  all the inequalities in \eqref{limen-necktris}, \eqref{lbcrit+}, and\eqref{lbcrit-} are in fact equalities  and that, in turn, $\hat v$ solves \eqref{1D-varbis}. Hence,  $\hat v$ does not depend on the 
selected subsequence. In turn, the equalities in \eqref{limen-necktris}, \eqref{lbcrit+} and \eqref{lbcrit-} hold for the full sequence and prove \eqref{energy2} and \eqref{energy3}, respectively. 

The strong convergence in $H^1(N)$ of $\{v_\e\}$ to $v$ can now be proved easily using the convergence of the Dirichlet energy (see \cite[Theorem 4.3-page 664]{MorSla} for the details).

\noindent{\bf Step 4.} ({\it upper bound of the energy in small balls})  
Let $M$ be as in \eqref{EMME}. We claim that 
\beq\label{claimsb}
\int_{B_{2M\de}(\e,0)\cap\Om_\e}|\nabla u_\e|^2\, dxdy\leq\frac{C}{|\ln\de|^2}
\eeq
for some constant $C>0$ independent of $\e$.
To this aim, let
\beq \label{ce0bis}
c_\e := \int_{B_{2M\de}(\e,0)\cap\Om_\e} |\nabla u_\e|^2 \, dxdy
\eeq
and assume  by contradiction that, up to a subsequence, 
\beq\label{cenotbis}
c_\e |\ln \de|^2 \to \infty \quad \text{ as $\e \to 0$.}
\eeq 
Note that, thanks to (O3) and \eqref{simple},  
\beq\label{simple2}
 \text{for all $R>0$ }\quad B_{R}(0,0)\cap\Om^+_\infty=B_{R}(0,0)\cap\frac1\de(\Om_\e-(\e,0)) \text{ if $\e$ is sufficiently small.}
\eeq
Thus, we can define for $(x,y) \in B_{2M}(0,0)\cap\Om^+_\infty$ and for $\e$ sufficiently small
$$
w_\e (x,y) := { 1 \over \sqrt{c_\e}} \left( u_\e (\de x+\e, \de y) - \bar u_\e \right),
$$
where $\bar u_\e: = \medintinrigo_{B_{2M\de}(\e,0)\cap\Om_\e} u_\e\, dxdy\,$. 
Notice that  we have
\beq\label{tildebiszero}
\int_{ B_{2M}(0,0)\cap\Om^+_\infty} |\nabla w_\e|^2 \, dxdy=1\,.
\eeq
By compactness and standard elliptic estimates, we may thus assume that, up to subsequences,
\beq\label{insideballbis}
w_\e \to w_0 \text{ in }W^{2,p}_{loc}({\Om}^+_\infty \cap B_{2M}(0,0))\quad\text{and}\quad 
\sup_{\e}\|w_\e\|_{L^{\infty}({\Om}^+_\infty \cap B_{r}(0,0))}<+\infty \text{ for all $0<r<2M$.}
\eeq
Moreover, the convergence is uniform away from the corner points of $\Om^+_\infty \cap B_{M}(0,0)$, so that in particular we have $w_\e \to w_0$ uniformly on  $\Gamma:=(\pa B_{M}(0,0)\cup \{x=-1\})\cap\Om^+_\infty$.
Set $m_0:=\min_{\Gamma}w_0-1$ and $M_0:=\max_{\Gamma}w_0+1$. Thus,
for $\e$ small enough we have 
$$
m_0\leq w_\e\leq M_0 \qquad \text{on }\Gamma
$$
or, equivalently,
\beq\label{bd}
m_0\sqrt{c_\e}+\bar u_\e\leq u_\e\leq M_0\sqrt{c_\e}+\bar u_\e  \qquad \text{on }\de\Gamma+(\e,0)\,.
\eeq
Using \eqref{cenotbis} and arguing as for  \eqref{ue-bis}, we may now construct lower and upper bounds $u_\e^-$ and $u_\e^+$ such that 
$u_\e^-\leq u_\e\leq u_\e^+$ in $\{M\de\leq |(x-\e,y)|\leq \rho_1,\, x>\e\}$ for some fixed $\rho_1>0$, with $u_\e^-$ and $u_\e^+$ satisfying 
\beq\label{outsideball}
\frac{u^-_\e(\de\cdot+\e,\de\,\cdot)-\bar u_\e}{\sqrt{c_\e}}\to m_0\,, \quad \frac{u^+_\e(\de\cdot+\e,\de\,\cdot)-\bar u_\e}{\sqrt{c_\e}}\to M_0\quad\text{locally uniformly in 
$\{x>0\}\setminus B_M(0,0)$.}
\eeq
Notice that by \eqref{simple}, we have that $N_\e\cap\{(1-\frac{\eta_0}2)\e\leq x \leq-\de+\e\}$ has flat horizontal boundary.  Note also that 
\beq\label{bd2}
-C\leq u_\e\leq C\quad\text{on }\{x=(1-\tfrac{\eta_0}2)\e\}\cap N_\e\,,
\eeq
where $C$ is the constant appearing in Definition~\ref{def:gcw}. Let $d$ be as in \eqref{d} and note that 
$$
n_\e(x,y)^+:=C+
\frac{\frac{d}2(-\de+\frac{\eta_0}2\e)^2+\sqrt{c_\e}M_0+\bar u_\e-C}{-\de+\frac{\eta_0}2\e}(x- (1-\tfrac{\eta_0}2)\e)
-\frac{d}2(x- (1-\tfrac{\eta_0}2)\e)^2
$$
solves
$$
\begin{cases}
\Delta n_\e^+=-d & \text{on }N_\e\cap\{(1-\frac{\eta_0}2)\e\leq x \leq-\de+\e\}\,, \\
n_\e^+=C & \text{on }\{x=(1-\tfrac{\eta_0}2)\e\}\cap N_\e\,,\\
n_\e^+=M_0\sqrt{c_\e}+\bar u_\e & \text{on }\{x=-\de+\e\}\cap N_\e\,,\\
\partial_\nu n^+_\e=0 & \text{on }\pa N_\e\,,
\end{cases}
$$
while 
$$
n_\e(x,y)^-:=-C+
\frac{-\frac{d}2(-\de+\frac{\eta_0}2\e)^2+\sqrt{c_\e}m_0+\bar u_\e+C}{-\de+\frac{\eta_0}2\e}(x- (1-\tfrac{\eta_0}2)\e)
+\frac{d}2(x- (1-\tfrac{\eta_0}2)\e)^2
$$
satisfies
$$
\begin{cases}
\Delta n_\e^-=d & \text{on }N_\e\cap\{(1-\frac{\eta_0}2)\e\leq x \leq-\de+\e\}\,, \\
n_\e^-=-C & \text{on }\{x=(1-\tfrac{\eta_0}2)\e\}\cap N_\e\,,\\
n_\e^-=m_0\sqrt{c_\e}+\bar u_\e & \text{on }\{x=-\de+\e\}\cap N_\e\,,\\
\partial_\nu n^-_\e=0 & \text{on }\pa N_\e\,.
\end{cases}
$$
Thus, recalling \eqref{bd} and \eqref{bd2}, by the comparison principle we deduce that
\beq\label{n+-0}
n_\e^-\leq u_\e\leq n_\e^+ \qquad\text{on }N_\e\cap\{(1-\tfrac{\eta_0}2)\e\leq x \leq-\de+\e\}
\eeq
and in turn
\beq\label{n+-}
\frac{n_\e^-(\de \cdot+\e, \de\,\cdot)-\bar u_\e}{\sqrt{c_\e}}\leq w_\e\leq 
\frac{n_\e^+(\de \cdot+\e, \de\,\cdot)-\bar u_\e}{\sqrt{c_\e}} \qquad\text{on }
\Omega_\infty^+\cap\{-\tfrac{\eta_0}2\tfrac\e\de\leq x\leq -1\}\,.
\eeq
Using \eqref{cenotbis}, it is easy to check that 
\beq\label{n+-2}
\frac{n_\e^-(\de \cdot+\e, \de\,\cdot)-\bar u_\e}{\sqrt{c_\e}}\to m_0\,, \quad \frac{n_\e^+(\de \cdot+\e, \de\,\cdot)-\bar u_\e}{\sqrt{c_\e}}\to M_0\quad\text{locally uniformly in 
$\overline \Om_\infty^+\cap\{x\leq -1\}$.}
\eeq
Combining \eqref{insideballbis}, \eqref{outsideball}, \eqref{n+-}, and \eqref{n+-2}, we conclude that the functions $w_\e$ are locally uniformly bounded in $\bar\Om_\infty^+$. Therefore, by standard arguments (see \cite[Proposition 6.2]{MorSla}) we can infer that, up to subsequences, $w_\e \to w_0$ in $W^{2,p}_{loc}(\Om^+_\infty)$, $p>2$, where $w_0$ is a bounded harmonic function in $\Om^+_\infty$ satisfying homogeneous Neumann boundary conditions on $\pa \Om^+_\infty$. Using  the Riemann mapping theorem we can find a conformal mapping $\Psi$ from
the
infinite strip $\mathcal{R}:=(-1, 1)\times\R$ onto  $\Om^+_\infty$. Thus, $w_0\circ\Psi$ in bounded and 
harmonic in $\mathcal{R}$ and satisfies a homogeneous Neumann condition on $\partial\mathcal{R}$.
By reflecting  $w_0\circ\Psi$ infinitely many times, we obtain a bounded  entire harmonic function, which then must be constant by Liouville theorem. 
Since we also have $\nabla w_\e \chi_{\frac{\Om_\e-(\e,0)}{\de}}\to \nabla w_0\chi_{\Om^+_\infty}$ in 
$L^2_{loc}(\R^2; \R^2)$ (again by \cite[Proposition 6.2]{MorSla}), it follows, in particular, 
$$
\int_{\Om^+_\infty \cap B_{2M} (0,0)} |\nabla w_\e|^2 \, dxdy=\int_{\frac{\Om_\e}{\e}\cap B_{2M} (0,0)} |\nabla w_\e|^2 \, dxdy\to 
\int_{\Om^+_\infty \cap B_{2M} (0,0)} |\nabla w_0|^2 \, dxdy=0\,,
$$
a contradiction to \eqref{tildebiszero}. This concludes the proof of \eqref{claimsb}.

 \noindent{\bf Step 5.} ({\it asymptotic behavior in the bulk}) 
  Set  now
\beq\label{tildewe}
\widetilde w_\e(x,y):=|\ln \delta|(u_\e(\delta x+\e,\delta y)-\bar u_\e)\,,
\eeq
where, we recall $\bar u_\e= \medintinrigo_{B_{2M\delta} (\e,0)\cap\Om_\e} u_\e\, dxdy\,$ and $M$ is defined as in \eqref{EMME}. Observe that, thanks to \eqref{toemme0bis} and \eqref{limce}, we have
\beq\label{mediabis}
\bar u_\e\to \frac{\alpha+\beta}{2}+ \frac{\pi\, m_{_{f_1f_2}}(\beta-\alpha)}{2(\pi\, m_{_{f_1f_2}}+ 2\ell)}\quad\text{as $\e\to 0^+$.}
\eeq
Recalling \eqref{claimsb}, we also get
$$
\int_{B_{2M} (0,0)\cap\Om^+_{\infty}} |\nabla \widetilde w_\e|^2 \, dxdy\leq C
$$
for some constant $C$ independent of $\e$. 
Thus, 
arguing exactly as in the proof of \eqref{insideball2}--\eqref{almostdone}, we may construct suitable sub- and super-solutions and,  using \eqref{mediabis},   deduce the existence of $\widetilde w_0$ such that,    up to subsequences, 
 \beq\label{almostdone0bis}
 \widetilde w_\e\to \widetilde w_0 \quad\text{in $W^{2,p}_{loc}((B_{2M} (0,0)\cap\Om^+_{\infty})\cup \{x>0\})$,}
 \eeq 
 with $\widetilde w_0$ satisfying
\beq\label{almostdonebis}
 \begin{cases}
 \Delta \widetilde w_0=0 & \text{in $\{x>0\}$,}\vspace{4pt}\\
\displaystyle\frac{\pa \widetilde w_0}{\pa\nu} =0 & \text{on $\{x=0\}\setminus \{0\}\times (-f_2(1), f_1(1))$,}\vspace{4pt}\\
 \displaystyle\frac{\widetilde w_0(x,y)}{\ln|(x,y)|}\to \frac{(\beta-\alpha)\ell}{\pi\,m_{_{f_1f_2}}+2\ell} & \text{as $ |(x,y)|\to +\infty$ with $x>0$.}\vspace{4pt}
 \end{cases}
 \eeq

\noindent{\bf Step 6.} ({\it asymptotic behavior in the neck})
Note that by \eqref{n+-0}, we deduce
\beq\label{n+-final}
|\ln \de|(n_\e^-(\de \cdot+\e, \de\,\cdot)-\bar u_\e)\leq \widetilde w_\e\leq 
|\ln \de|(n_\e^+(\de \cdot+\e, \de\,\cdot)-\bar u_\e) \qquad\text{on }
\Omega_\infty^+\cap\{-\tfrac{\eta_0}2\tfrac\e\de\leq x\leq -1\}\,.
\eeq
Using \eqref{eq:thincrit} and \eqref{mediabis},  one can show  that 
\beq\label{n+-final2}
\begin{array}{c}
|\ln \de|(n_\e^-(\de x+\e, \de y)-\bar u_\e)\to \displaystyle m_0+\frac{2\ell}{\eta_0}\left(\frac{\alpha+\beta}{2}+ \frac{\pi\, m_{_{f_1f_2}}(\beta-\alpha)}{2(\pi\, m_{_{f_1f_2}}+ 2\ell)}+C\right)(x+1)\,, \vspace{5pt}\\
 |\ln \de|(n_\e^+(\de x+\e, \de y)-\bar u_\e)\to \displaystyle M_0+\frac{2\ell}{\eta_0}\left(\frac{\alpha+\beta}{2}+ \frac{\pi\, m_{_{f_1f_2}}(\beta-\alpha)}{2(\pi\, m_{_{f_1f_2}}+ 2\ell)}-C\right)(x+1)
\end{array}
\eeq
for all $(x,y)\in \{x<-1\}\cap \Om_\infty^+$. The convergence is in fact uniform on the bounded subsets of  $\{x<-1\}\cap \Om_\infty^+$.

Collecting \eqref{n+-final} and \eqref{n+-final2}, also from the previous step, we may infer that, up to subsequences,  the functions $\widetilde w_\e$ converge in $W^{2,p}_{loc}(\Om^\pm_\infty)$ for every $p\geq 1$ to the unique solution  $\tilde w_0$ of the problem
$$
\begin{cases}
\Delta \tilde w_0 =0 & \text{in $\Om^+_\infty$,}\vspace{4pt}\\
\partial_{\nu} \tilde w_0=0 & \text{on $\partial\Om^+_\infty$, }\vspace{4pt}\\
 \displaystyle\frac{\tilde w_0(x,y)}{\ln|(x,y)|}\to \pm\frac{(\beta-\alpha)\ell}{\pi\,m_{_{f_1f_2}}+2\ell} & \text{as $ |(x,y)|\to \pm\infty$ with $ x>0$,}\vspace{4pt}\\
  \text{$\tilde w_0$ grows at most linearly   in $\Om^+_\infty\cap\{x< 0\}$,}\vspace{4pt}\\
  \tilde w_0(0,0)=0\,,
\end{cases}
$$
Arguing as in the the final part of the proof of Theorem~\ref{th:normal}, the same convergence holds for the functions $w^+_\e$ defined in \eqref{we+-}. A completely analogous argument applies to the functions  $w^-_\e$. The conclusion  of the theorem follows from  Proposition~\ref{prop:we+asym}. 
\end{proof}
\begin{remark}\label{rm:without}
If one removes the extra assumption \eqref{simple}, the proof goes through without changes except for the construction of the lower and upper bounds $n_\e^-$ and $n^+_\e$ described in Step 4. In the general case, the construction of such barriers in the neck is more complicated and it is essentially performed in \cite[Lemmas 4.18 and 4.19]{MorSla}.
\end{remark}

\begin{remark}[Renormalized energy]\label{rm:gamma}
By considering the limit of the rescaled functionals
\beq\label{renF}
|\ln\delta|\left( F(u_\e, \Om_\e) - W(\beta) |\Om^r| - W(\alpha) |\Om^l| \right)
\eeq
we may introduce the following {\em renormalized limiting energy}, defined for all $(\theta_1, \theta_2)\in \R^2$ by
 \beq\label{RE}
 RE(\theta_1, \theta_2)= \frac{\ell\left(\theta_2-\theta_1\vphantom{\int}\right)^2}{2m_{_{f_1f_2}}}+
\frac{\pi(\alpha-\theta_1)^2}{2}+\frac{\pi(\beta-\theta_2)^2}{2}\,.
 \eeq
 Roughly speaking, the first term on the right-hand side represents the asymptotic optimal renormalized energy needed to make a transition from $\theta_1$ to
 $\theta_2$ inside the neck. The remaining two terms represent the optimal  bulk energy associated with transition from $\alpha$ to $\theta_1$ in the left bulk and from $\theta_2$ to $\beta$ in the right bulk, respectively.  In fact, by a slight modification of the arguments contained in the proof of Theorem~\ref{th:thin-critical}, one could show that the functionals \eqref{renF} $\Gamma$-converge to \eqref{RE} in the following sense:
 
\begin{itemize}
\item[(i)] (liminf inequality): Let $u_\e\in H^1(\Om_\e)$ and set 
$$
\theta_1(u_\e):=\medint_{B_{\delta}(-\e,0)\cap\Om_\e}u_\e\, dxdy\qquad\text{and}
\qquad
\theta_2(u_\e):=\medint_{B_{\delta}(\e,0)\cap\Om_\e}u_\e\, dxdy\,.
$$
 If $\|u_\e-\alpha\|_{L^{1}(\Om_\e^l)}\to 0$,   $\|u_\e-\beta\|_{L^{1}(\Om_\e^r)}\to 0$,  
 $\theta_1(u_\e)\to \theta_1$, and $\theta_2(u_\e)\to \theta_2$, then
$$
\liminf_{\e\to 0} |\ln\delta|\left( F(u_\e, \Om_\e) - W(\beta) |\Om^r| - W(\alpha) |\Om^l| \right)\geq 
 RE(\theta_1, \theta_2)\,.
$$ 
\item[(ii)] (limisup inequality): for every $(\theta_1, \theta_2)\in \R^2$, there exist a recovery sequence $u_\e\in H^1(\Om_\e)$ such that $\|u_\e-\alpha\|_{L^{1}(\Om_\e^l}\to 0$,   $\|u_\e-\beta\|_{L^{1}(\Om_\e^r}\to 0$,  
 $\theta_1(u_\e)\to \theta_1$, and $\theta_2(u_\e)\to \theta_2$, and 
 $$
\limsup_{\e\to 0} |\ln\delta|\left( F(u_\e, \Om_\e) - W(\beta) |\Om^r| - W(\alpha) |\Om^l| \right)\leq  RE(\theta_1, \theta_2)\,.
$$ 
\end{itemize}
Finally we notice that 
$$
\min_{\theta_1, \theta_2}RE(\theta_1, \theta_2)=\frac{(\beta-\alpha)^2\pi\ell}{2(m_{_{f_1f_2}}\pi+2\ell)}\,, 
$$
where the last quantity is exactly is the sum of the two limiting energies \eqref{energy2} and \eqref{energy3}. Moreover, the unique minimizers $\theta^{opt}_1$ and $\theta_{2}^{opt}$ coincide with the boundary data $\theta(-1)$ and $\theta(1)$, respectively, in the one-dimensional minimization problem \eqref{1D-varbis}.
\end{remark}

We conclude the section by stating the results for remaining thin neck regimes.
The asymptotic behavior can be formally  deduced from Theorem~\ref{th:thin-critical} by letting $\ell\to+\infty$ and $\ell\to 0$ respectively. We don't provide the proof here, since the result follows by similar arguments as in the proof of 
Theorem~\ref{th:thin-critical}, which in fact deals with the most difficult case.
We start by considering the subcritical case.
\begin{theorem}[Subcritical thin neck]\label{th:thin-subcritical}
Assume that
$$
\lim_{\e\to 0^+}\frac{\de|\ln\de|}{\e}=0\,.
$$
Let $\{u_\e\}$ be the family of critical point as in Definition~\ref{def:gcw} and  $\{v_\e\}$ be the family of rescaled profiles defined by
$$
v_\e(x,y):=u_\e(\e x, \de y)\,.
$$
Then $v_\e\to v$  in $H^1(N)$, where $v(x,y):=\hat v(x)$ with $\hat v$ being the unique solution to 
the one-dimensional problem
$$
\min\left\{\frac12\int_{-1}^1(f_1+f_2)(\theta^\prime)^2\, dx:\, \theta\in H^1(-1,1), \theta(-1)=\alpha\,, \theta(1)=\beta\right\}\,.
$$
Moreover, 
$$
\lim_{\e\to 0^+}\frac\e\de (F(u_\e, \Om_\e)-W(\beta)|\Om^r|-W(\alpha)|\Om^l|)=\lim_{\e\to 0^+}\frac\e\de F(u_\e, N_\e)=
 \frac{(\beta-\alpha)^2}{2m_{_{f_1f_2}}},.
$$
\end{theorem}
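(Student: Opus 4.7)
The plan is to follow the scheme of the proof of Theorem~\ref{th:thin-critical}, with substantial simplifications due to the fact that in the subcritical regime the transition is confined entirely to the neck. Since $\e/(\de|\ln\de|)\to+\infty$ by hypothesis, the natural energy scaling is $\e/\de$ (rather than $|\ln\de|$), and any nontrivial bulk boundary layer would contribute infinitely to the $\e/\de$-scaled energy; this will force the limiting one-dimensional profile to attain the well values at the endpoints.

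The first step is to obtain the matching upper bound. Let $\hat\theta$ be the minimizer of the one-dimensional problem in the statement, which exists and is unique by strict convexity. Define the test function
\[
\xi_\e(x,y):=\begin{cases} \alpha & \text{in }\Om_\e^l,\\ \hat\theta(x/\e) & \text{in }N_\e,\\ \beta & \text{in }\Om_\e^r, \end{cases}
\]
which is continuous across the neck-bulk interfaces since $\hat\theta(-1)=\alpha$ and $\hat\theta(1)=\beta$. A direct calculation analogous to those preceding \eqref{recovery} yields
\[
F(\xi_\e,\Om_\e) - W(\alpha)|\Om^l| - W(\beta)|\Om^r| = \frac{(\beta-\alpha)^2\de}{2\e\,m_{_{f_1f_2}}} + o(\de/\e).
\]
Since $\|\xi_\e-u_\e\|_{L^1(\Om_\e)}\to 0$, Theorem~\ref{th:locmin} gives $F(u_\e,\Om_\e)\le F(\xi_\e,\Om_\e)$ for $\e$ small, whence $\frac{\e}{\de}(F(u_\e,\Om_\e) - W(\alpha)|\Om^l| - W(\beta)|\Om^r|) \le \frac{(\beta-\alpha)^2}{2m_{_{f_1f_2}}} + o(1)$. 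Combining with the scaling identity $\frac{\e}{\de}\int_{N_\e}|\nabla u_\e|^2 = \int_N (|\partial_x v_\e|^2 + \frac{\e^2}{\de^2}|\partial_y v_\e|^2)$ (cf.~\eqref{mezzocollo}) and \eqref{nonnegativebis}, we deduce that $v_\e$ is bounded in $H^1(N)$ and $\partial_y v_\e\to 0$ in $L^2(N)$; up to a subsequence, $v_\e\rightharpoonup v=\hat v(x)$ weakly in $H^1(N)$ with $\hat v\in H^1(-1,1)$.

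The crucial step is to pin down the boundary values $\hat v(\pm 1)$. The bulk lower bounds \eqref{lbcrit+}--\eqref{lbcrit-}, whose proof from Step~2 of Theorem~\ref{th:thin-critical} carries over unchanged, yield $|\ln\de|(F(u_\e,\Om_\e^r)-W(\beta)|\Om^r|)\ge \pi(\beta-\hat v(1))^2/2 - o(1)$. Combining with the $O(\de/\e)$ upper bound of the previous step and the hypothesis $\de|\ln\de|/\e\to 0$, this forces $\pi(\beta-\hat v(1))^2/2\le C\de|\ln\de|/\e\to 0$, hence $\hat v(1)=\beta$; symmetrically, $\hat v(-1)=\alpha$. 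Applying weak lower semicontinuity of the Dirichlet integral together with the minimization characterization of $m_{_{f_1f_2}}$ gives
\[
\liminf_{\e\to 0}\frac{\e}{\de} F(u_\e,N_\e)\ge \frac12\int_{-1}^1(f_1+f_2)(\hat v')^2\, dx\ge \frac{(\beta-\alpha)^2}{2m_{_{f_1f_2}}},
\]
which matches the limsup from Step~1. All inequalities must then be equalities, so $\hat v$ is the unique minimizer of the 1D problem, the full sequence converges by uniqueness, both energy limits are established, and strong $H^1(N)$-convergence follows from convergence of norms as in the critical-regime proof. The main subtle point is the matching argument identifying $\hat v(\pm 1)=\alpha,\beta$; apart from this, the proof is strictly easier than Theorem~\ref{th:thin-critical}, since no nontrivial bulk renormalized energy has to be tracked.
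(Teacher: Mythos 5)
The paper gives no separate proof of this theorem, stating only that it follows by arguments similar to those of Theorem~\ref{th:thin-critical}; your proposal is a correct realization of exactly that scheme, and the one genuinely new point in the subcritical regime --- playing the bulk lower bound at scale $1/|\ln\de|$ against the global $O(\de/\e)$ upper bound and using $\de|\ln\de|/\e\to0$ to force $\hat v(1)=\beta$, $\hat v(-1)=\alpha$ --- is identified and handled correctly. The only step to tighten is your use of \eqref{nonnegativebis}: as stated, with error $o(1/|\ln\de|)$, it is too weak at the $\e/\de$ scale (since $\e/(\de|\ln\de|)\to\infty$, the term $\tfrac{\e}{\de}\,o(1/|\ln\de|)$ is a priori unbounded, so neither the $H^1(N)$ bound on $v_\e$ nor the liminf inequality follows); you must instead invoke the quantitative estimate $\int_{\Om^r_\e}W(u_\e)\,dxdy\geq W(\beta)|\Om^r|-C\de^{2\gamma}$ with $\gamma>1/2$, which the barrier argument proving \eqref{nonnegativebis} (the thin-neck analogue of the proof of \eqref{nonnegative}) already delivers.
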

\begin{remark}\label{rm:subcritical-thin}
{Note the rescaled profiles $v_\e$ depend only on the shape of the neck.  The boundary conditions satisfied by $\hat v$ show that the whole transition from $\alpha$ to $\beta$  is asymptotically confined inside the neck.}
\end{remark}
We conclude with the supercritical case.
\begin{theorem}[Supercritical thin neck]\label{th:supercritical}
Assume that
$$
\lim_{\e\to 0^+}\frac{\de}{\e}=0\qquad\text{and}\qquad \lim_{\e\to 0^+}\frac{\de|\ln\de|}{\e}=+\infty\,.
$$
Let $\{u_\e\}$ be the family of critical points as in Definition~\ref{def:gcw}. Then the following statements hold true.
\begin{enumerate}
\item[(i)] Define
$$
w_\e^\pm(x,y):=|\ln\de|(u_\e(\de x\pm\e,\de y)-u_\e(\pm \e,0))\quad\text{for }(x,y)\in \widetilde \Om_\e^\pm:=\tfrac{\Om^\pm_\e+(\mp\e,0)}{\de}\,.
$$
Then,  
$$
u_\e(\pm\e,0)\to \frac{\alpha+\beta}{2}\quad\text{as $\e\to 0^+$}
$$
 and the functions
$w_\e^\pm$ converge in $W^{2,p}_{loc}(\Om^\pm_\infty)$ for every $p\geq 1$ to the unique solution $w^\pm$ of the problem
$$
\begin{cases}
\Delta w^\pm=0 & \text{in $\Om^\pm_\infty$,}\vspace{4pt}\\
\partial_{\nu} w^\pm=0 & \text{on $\partial\Om^\pm_\infty$, }\vspace{4pt}\\
 \displaystyle\frac{w^\pm(x,y)}{\ln|(x,y)|}\to \pm\frac{\beta-\alpha}{2} & \text{as $ |(x,y)|\to \pm\infty$ with $\pm x>0$,}\vspace{4pt}\\
  \displaystyle\frac{w^\pm(x,y)}{x}\to \frac{1}{(f_1+f_2)(\pm1)}\frac{(\beta-\alpha)\pi}{2}  & \text{uniformly in $y$ as $ x\to \mp\infty$,}\vspace{4pt}\\
  w^\pm(0,0)=0\,,
\end{cases}
$$
where 
$$
\Om_\infty^\pm:=\left\{(x,y):\, \pm x\leq 0\,, -f_2(\pm1)<y< f_1(\pm1)\right\}\cup\{(x,y):\, \pm x>0\}\,.
$$
Moreover, $\nabla w_\e^\pm\chi_{\widetilde \Om_\e^\pm}\to \nabla w^\pm\chi_{\Om^\pm_\infty}$ in $L^2_{loc}(\R^2;\R^2)$. 

\item[(ii)] We have
\begin{multline}\label{superenergy}
\lim_{\e\to 0^+}|\ln\de|\left( F(u_\e, \Om_\e)- W(\beta) |\Om^r| - W(\alpha) |\Om^l| \right)\\
=\lim_{\e\to 0^+}|\ln\de|\left( F(u_\e, \Om_\e\setminus N_\e)- W(\beta) |\Om^r| - W(\alpha) |\Om^l| \right)= \frac{(\beta-\alpha)^2\pi}{4}\,.
\end{multline}
\end{enumerate}
\end{theorem}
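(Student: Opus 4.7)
\emph{Overall approach.} My plan is to adapt the proof of Theorem~\ref{th:thin-critical}: the supercritical regime corresponds formally to the limit $\ell\to+\infty$ there. The boundary values $\hat v(\pm 1)$ collapse to the common value $(\alpha+\beta)/2$, the critical-regime energy $\tfrac{(\beta-\alpha)^2\pi\ell}{2(m_{_{f_1f_2}}\pi+2\ell)}$ tends to $\tfrac{(\beta-\alpha)^2\pi}{4}$, and the constants appearing in the conditions at infinity for $w^\pm$ in \eqref{we+asym} match those in the present statement. So the blueprint of the proof is preserved, with some simplifications due to the disappearance of a nontrivial one-dimensional profile inside the neck.

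\emph{Upper bound.} First I produce a competitor $\xi_\e$ equal to $\alpha$ (resp.~$\beta$) on the two bulks outside small balls $B_{\delta^\gamma}(\pm\e,0)$, interpolating logarithmically on the annuli $\{M\delta\le|(x\mp\e,y)|\le\delta^\gamma\}$ between $(\alpha+\beta)/2$ and $\alpha$ (resp.~$\beta$), and equal to $(\alpha+\beta)/2$ on $N_\e$ and on the central balls $B_{M\delta}(\pm\e,0)$. An explicit Dirichlet computation gives $\lim_{\e\to 0}|\ln\delta|(F(\xi_\e,\Om_\e)-W(\alpha)|\Om^l|-W(\beta)|\Om^r|)=\tfrac{(\beta-\alpha)^2\pi}{4(1-\gamma)}$; invoking Theorem~\ref{th:locmin} and letting $\gamma\to 0^+$ yields the $\limsup$ inequality in \eqref{superenergy}.

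\emph{Neck collapse and bulk blow-up.} Combined with the analogue of \eqref{nonnegativebis} (derived exactly as in Step~1 of the proof of Theorem~\ref{th:normal} via Lemma~\ref{lm:barriers}), the upper bound forces $|\ln\delta|F(u_\e,N_\e)\to 0$. Rescaling $v_\e(x,y):=u_\e(\e x,\delta y)$ on $N$ then gives
$$\int_N|\partial_x v_\e|^2+\tfrac{\e^2}{\delta^2}|\partial_y v_\e|^2\,dxdy\le C\tfrac{\e}{\delta|\ln\delta|}\to 0,$$
so $v_\e$ converges strongly in $H^1(N)$ to a constant. Standard elliptic regularity applied to $\hat u_\e^\pm(x,y):=u_\e(\delta x\pm\e,\delta y)$ on a fixed small ball then gives $u_\e(\pm\e,0)\to m_\pm$ for constants $m_\pm$; matching traces of $v_\e$ at $x=\pm 1$ forces $m_+=m_-=:m$, and equating the sharp bulk lower bound $\pi((\alpha-m)^2+(\beta-m)^2)/2$ (obtained as in \eqref{liminfdir}) with the upper bound above pins $m=(\alpha+\beta)/2$. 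Next I establish the localization estimate $\int_{B_{2M\delta}(\pm\e,0)\cap\Om_\e}|\nabla u_\e|^2\,dxdy\le C|\ln\delta|^{-2}$ by the contradiction/barrier argument of Step~4 in the proof of Theorem~\ref{th:thin-critical}. With this bound, the rescaled profiles $w_\e^\pm$ are uniformly $W^{2,p}_{loc}(\Om^\pm_\infty)$-bounded and converge (up to subsequences) to a bounded harmonic function with homogeneous Neumann conditions on $\partial\Om^\pm_\infty$; Proposition~\ref{prop:we+asym} identifies the unique limit as $w^\pm$ and forces convergence of the full sequence. Matching the sharp Dirichlet-energy lower bound for $w^\pm$ against the upper bound above yields \eqref{superenergy} and shows that the energy concentrates outside the neck.

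\emph{Main obstacle.} The most delicate point is the localization estimate. Its proof requires barriers both on a bulk annulus around $(\pm\e,0)$ via Lemma~\ref{lm:barriers} and on the neck tail $\{-\eta_0\e/(2\delta)\le x\le -1\}\cap\Om^+_\infty$ after rescaling by $\delta$; there one uses assumption \eqref{simple} to ensure flat horizontal neck walls, constructs explicit upper and lower solutions to $\Delta v=\pm d$ with Neumann sides, and passes to the blow-up limit via a Riemann-mapping plus Liouville argument to contradict unit Dirichlet energy. This is precisely the construction carried out in Step~4 of Theorem~\ref{th:thin-critical} and transcribes, with only notational modifications, to the present regime.
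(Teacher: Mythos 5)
Your proposal is correct and follows essentially the route the paper intends: the authors omit the proof of Theorem~\ref{th:supercritical}, noting it follows by the arguments of Theorem~\ref{th:thin-critical} (formally $\ell\to+\infty$), and your outline --- upper bound via the logarithmic competitor of \eqref{recovery}, neck collapse from \eqref{mezzocollo} multiplied by $\e/\de$, trace matching to pin $m=\frac{\alpha+\beta}{2}$ by squeezing against \eqref{liminfdir}, the localization estimate of Step~4, and identification via Proposition~\ref{prop:we+asym} --- reproduces that adaptation faithfully with the correct limiting constants. One slip to fix: the subsequential limit of $w_\e^\pm$ is \emph{not} a bounded harmonic function (it grows logarithmically in the bulk and linearly in the neck tail, and boundedness plus the Riemann-mapping/Liouville argument would force it to vanish); boundedness and Liouville are used only in the contradiction step establishing $c_\e|\ln\de|^2\not\to\infty$, whereas identifying the limit as $w^\pm$ requires first extracting the logarithmic behavior at infinity from the outer barriers of Lemma~\ref{lm:barriers} and the at-most-linear growth from the neck barriers $n_\e^\pm$ (as in Steps~5--6 of the critical proof), after which Proposition~\ref{prop:we+asym} applies.
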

Note that in the supercritical case the whole transition occurs outside of the neck. This is also reflected in the limiting behavior of the energy \eqref{superenergy}.

\vskip 0.5cm
\noindent {\bf Acknowledgements}  VS would like to acknowledge support from the EPSRC grant EP/I028714/1

{\frenchspacing
\begin{thebibliography}{99}

\bibitem{Arrieta} \textsc{J.M. Arrieta, A.N. Carvalho}: {\it Spectral convergence and nonlinear dynamics of reaction-diffusion equations under perturbations of the domain.}  { J. Differential Equations} {\bf 199} (2004), 143--178.

\bibitem{Bruno} \textsc{P. Bruno}: {\it Geometrically constrained magnetic wall.} Phys. Rev. Lett. {\bf 83} (1999), 2425--2428

\bibitem{CaHo} \textsc{R. Casten, C. Holland}: {\it Instability results for reaction-diffusion equations with Neumann boundary conditions.} J. Differential Equations {\bf 27} (1978), 266--273.

\bibitem{Murat} \textsc{J. Casado-D'az, M. Luna-Laynez, F. Murat}: {\it The diffusion equation in a notched beam.} Calc. Var.  and Partial Differential Equations {\bf 31} (2008), 297--323.


\bibitem{Chen} \textsc{S. Chen, Y. Yang}: {\it Phase transition solutions in geometrically constrained magnetic domain wall models.} J. Math. Phys. {\bf 51} (2010), 023504

\bibitem{Harsh} \textsc{H.D. Chopra, S.Z. Hua}: {\it Ballistic magnetoresistance over 3000\% in Ni nanocontacts at room temperature.} Phys. Rev. B {\bf 66}  (2002), 020403(R)


\bibitem{Dan1} \textsc{E. Dancer}: {\it The effect of domain shape on the number of positive solutions of certain nonlinear equations.} J. Differ. Equations {\bf 74}, (1988), 120Ð156.

\bibitem{Daners} \textsc{D. Daners}: {\it Dirichlet problems on varying domains.} J. Differential Equations {\bf 188} (2003), 591--624.


\bibitem{HV} \textsc{J.K.Hale, J.Vegas}:
{\it A nonlinear parabolic equation with varying domain.}, Arch. Ration.  Mech. Anal. {\bf 86} (1984), 99--123.

\bibitem{Jimbo1} \textsc{S. Jimbo}:
{\it Singular perturbation of domains and semilinear elliptic equation.}
J. Fac. Sci. Univ. Tokyo {\bf 35} (1988), 27--76.

\bibitem{Jimbo2} \textsc{S. Jimbo}:
{\it Singular perturbation of domains and semilinear elliptic equation 2.}
J. Diff. Equat. 75 (1988), 264-289.

\bibitem{Jimbo3} \textsc{S. Jimbo}:
{\it Singular perturbation of domains and semilinear elliptic equation 3.}
Hokkaido Math. J. {\bf 33} (2004), 11--45.

\bibitem{Jub} \textsc{P.-O. Jubert, R. Allenspach, A. Bischof}: {\it Magnetic domain walls in constrained geometries.} Phys. Rev. B {\bf 69}  (2004), 220410(R)


\bibitem{KV} \textsc{R.V. Kohn, V. Slastikov}: {\it Geometrically constrained walls.}
{Calc. Var. Partial Differential Equations} {\bf 28} (2007),  33--57.


\bibitem{MOP} \textsc{V.A. Molyneux, V.V. Osipov, E.V. Ponizovskaya}: {\it Stable
two- and three-dimensional geometrically constrained magnetic structures:
The action of magnetic fields.} Phys. Review B {\bf 65} (2002), 184425.

\bibitem{MorSla} \textsc{M. Morini, V. Slastikov}: {\it Geometrically constrained walls in two dimensions.}
Arch. Ration. Mech. Anal. {\bf 203} (2012), 621--692.



\bibitem{RSS} \textsc{J. Rubistein, M. Schatzman, P. Sternberg}: {\it Ginzburg-Landau model in thin loops with narrow constrictions.} SIAM J. Appl. Math. {\bf 64} (2004), 2186--2204.

\bibitem{Sasaki} \textsc{M. Sasaki, K. Matsushita, J. Sato, H. Imamura}: {\it Thermal stability of the geometrically constrained magnetic wall and its effect on a domain-wall spin valve.} J. Appl. Phys. {\bf 111}, (2012),  083903.


\bibitem{Tatara} \textsc{G. Tatara, Y.-W. Zhao, M. Munoz, N. Garcia}: {\it Domain wall scattering explains 300\% ballistic magnetoconductance of nanocontacts.} Phys. Rev. Lett {\bf 83} (1999), 2030--2033.

\end {thebibliography}
}

\end{document}